\definecolor{dark-gray}{gray}{0.3}
\definecolor{dkgray}{rgb}{.4,.4,.4}
\definecolor{dkblue}{rgb}{0,0,.5}
\definecolor{medblue}{rgb}{0,0,.75}
\definecolor{rust}{rgb}{0.5,0.1,0.1}
\numberwithin{equation}{section} 
\providecommand{\mathbold}[1]{\bm{#1}}  % Must be after 'fourier'
\newtheorem{theorem}{Theorem}[section]
\newtheorem{lemma}[theorem]{Lemma}
\newtheorem{proposition}[theorem]{Proposition}
\newtheorem{corollary}[theorem]{Corollary}
\theoremstyle{definition}
\newtheorem{definition}[theorem]{Definition}
\newtheorem{example}[theorem]{Example}
\newtheorem{remark}[theorem]{Remark}
\renewcommand{\phi}{\varphi}
\newcommand{\e}{\varepsilon}
\renewcommand{\mid}{\mathrel{\mathop{:}}} 
\newcommand{\zerovct}{\vct{0}} % Zero vector
\newcommand{\Id}{\mathbf{I}}
\newcommand{\ball}[1]{B^{#1}}
\providecommand{\mathbbm}{\mathbb} % In case we don't load bbm
\newcommand{\R}{\mathbbm{R}}
\newcommand{\polar}{\circ}
\newcommand{\prtensor}{\,\hat{\otimes}\,}
\newcommand{\minimize}{\text{minimize}\quad}
\newcommand{\subjto}{\quad\text{subject to}\quad}
\newcommand{\argmin}{\operatorname*{arg\; min}}
\newcommand{\Prob}{\mathbbm{P}}
\newcommand{\Expect}{\operatorname{\mathbb{E}}}
\newcommand{\St}{\operatorname{St}}
\newcommand{\normal}{\textsc{Normal}}
\newcommand{\diag}{\operatorname{diag}}
\newcommand{\vct}[1]{\mathbold{#1}}
\newcommand{\mtx}[1]{\mathbold{#1}}
\newcommand{\transp}[1]{#1^{T}}
\renewcommand{\vec}{\operatorname{vec}}
\newcommand{\Proj}{\ensuremath{\mtx{\Pi}}} % Projection operator
\newcommand{\ip}[2]{\langle {#1}, {#2} \rangle}
\newcommand{\norm}[1]{\Vert {#1}\Vert}
\newcommand{\lone}[1]{\norm{#1}_{\ell_1}}
\DeclareMathOperator{\dist}{dist}
\newcommand{\enorm}[1]{\norm{#1}}
\newcommand{\Desc}{\mathcal{D}}
\newcommand{\sdim}{\delta}
\newcommand{\sdimw}{\delta^*}
\DeclareMathOperator{\Circ}{Circ}
\newcommand{\IR}{\mathbbm{R}}
\newcommand{\veps}{\varepsilon}
\newcommand{\mC}{\mathcal{C}}
\newcommand{\mE}{\mathcal{E}}
\newcommand{\inter}{{\operatorname{int}}}
\newcommand{\conv}{\operatorname{conv}}
\newcommand{\clconv}{\overline{\conv}}
\newcommand{\GL}{\operatorname{GL}}
\newcommand{\tdash}{\overset{a}{\to}}
\newcommand{\K}{\mathcal{K}}
\newcommand{\D}{\mathcal{D}}
\renewcommand{\P}{\mathcal{P}}
\DeclareMathOperator{\id}{id}
\DeclareMathOperator{\gr}{gr}
\newcommand{\vp}{\varphi}
\DeclareMathOperator{\spa}{span}
\DeclareMathOperator{\vol}{vol}
\DeclareMathOperator{\ima}{im}
\newcommand{\Ren}{\mathcal{R}}
\newcommand{\T}{\mathcal{T}}
\newcommand{\resTMP}[2]{#1\to#2}
\newcommand{\res}[3]{\vct{#1}_{\resTMP{#2}{#3}}}
\newcommand{\nres}[3]{\|\vct{#1}\|_{\resTMP{#2}{#3}}}
\newcommand{\sres}[3]{\sigma_{\resTMP{#2}{#3}}(\vct{#1})}
\newcommand{\nrest}[3]{\|\vct{#1}^T\|_{\resTMP{#2}{#3}}}
\newcommand{\srest}[3]{\sigma_{\resTMP{#2}{#3}}(\vct{#1}^T)}
\newcommand{\srestm}[3]{\sigma_{\resTMP{#2}{#3}}(-\vct{#1}^T)}
\newcommand{\nresdag}[3]{\|\vct{#1}^\dagger\|_{\resTMP{#2}{#3}}}
\newcommand{\RCD}[3]{\Ren_{#2,#3}(\vct{#1})}
\newcommand{\DA}{\vct{\Delta A}}
\newcommand{\llangle}{\langle\hspace{-1.5pt}\langle}  % this is actually contained in mnsymbol, but loading this package causes trouble...
\newcommand{\rrangle}{\rangle\hspace{-1.5pt}\rangle}  % this is actually contained in mnsymbol, but loading this package causes trouble...
\title[Gordon's inequality and condition numbers in conic optimization]{Gordon's inequality and condition numbers in conic optimization}
\author[D.~Amelunxen]{Dennis Amelunxen}
\author[M.~Lotz]{Martin Lotz}
\date{\today}
\begin{document}

\begin{abstract}
The probabilistic analysis of condition numbers has traditionally been approached from different angles; one is based on Smale's 
program in complexity theory and features integral geometry, while the other is motivated by geometric functional analysis and makes use of the theory of
Gaussian processes. 
In this note we explore connections between the two approaches in the context of the biconic homogeneous feasiblity problem
and the condition numbers motivated by conic optimization theory.
Key tools in the analysis are Slepian's and Gordon's comparision inequalities for Gaussian processes, interpreted as monotonicity properties of moment functionals,
and their interplay with ideas from conic integral geometry.
\end{abstract}

\maketitle

\section{Introduction}\label{sec:intro}
This article deals with certain aspects of non-asymptotic random matrix theory and integral geometry that are motivated by applications in conic optimization theory, compressed sensing, and linear inverse problems. The objects of study are the norm,
the smallest singular value, and condition numbers of Gaussian random operators between convex cones.

Driven by various types of applications, the probabilistic analysis of condition numbers has been of interest to two seemingly different cultures. 
The first culture derives its raison d'\^{e}tre from Steve Smale's program of a condition-based complexity theory~\cite{Smale81,Sm:97,BuergICM,Condition}, with the aim of developing an average-case or smoothed analysis
of condition numbers, and therefore, of the running time of numerical algorithms that depend on them.
The second culture has roots in geometric functional analysis, and seeks to identify bounds on extreme singular values, and hence condition numbers, that are satisfied with overwhelming probability.
This second strand has found its way into the sensitivity analysis for compressed sensing and related problems~\cite{rudelson2010non,Vershynin2012,CRPW:12,FR:13}. 
One purpose of this article is to explore connections between these two points of view, and to develop a common perspective.
Our setting is a generalization of the homogeneous conic feasibility problem: given a closed convex cone $C\subseteq \R^m$ and $\mtx{A}\in \R^{n\times m}$, does there exist a nonzero $\vct{x}\in C$ such that $\mtx{A}\vct{x}=\vct{0}$?
Instances and variations of this problem include the linear, quadratic, and semidefinite programming feasibility problems, as well as versions of the nullspace condition in compressed sensing and generalizations thereof.
The associated quantities of interest are the norm, singular value, and Renegar's condition number~\cite{rene:95a} of $\mtx{A}$ restricted to $C$.
An important tool in the study of the norms and singular values of Gaussian operators are the inequalities of Slepian and Gordon~\cite{G:85}. We define a class of orthogonal invariant valuations
on the set of convex bodies, the {\em moment functionals}, and interpret Slepian's inequality, as well as a generalization to higher moments, as monotonicity property of these functionals. As a consequence we get new
bounds on higher moments of the norm of cone restricted random operators involving Renegar's condition number, as well as bounds on invariants such as the Gaussian width and the statistical dimension of linear images of convex cones. By generalizing these valuations to convex bundles we obtain a similar geometric interpretation of Gordon's comparison theorem. Finally, we discuss the benefits and limitations of Gaussian comparison inequalities as a tool for the classic probabilistic analysis of condition numbers, and explore relations to the geometric theory based on conic integral geometry and spherical tube formulas.

Before presenting a more detailed description in Section~\ref{sec:intro-Gordon-conrelinop}--\ref{sec:intro-conic-integr-geom}, 
we start out with an illustration of the above-mentioned two different practices in analyzing the probabilistic behavior of condition numbers. For a real-world metaphor, imagine an overdone pizza; to judge its quality you can either focus on the burnt rim, or on the 
mean color of the topping.

\subsection{Two viewpoints on condition numbers}\label{sec:2-viewpoints}

Condition numbers measure the computational difficulty (in a general sense) of an input for a given numerical problem: the higher the condition of an input the harder it is for the machine to solve it. It is often the case that a specific set, the so-called \emph{ill-posed inputs}, is particularly hard, if not numerically impossible to solve. This is usually a zero-volume set, and in many cases of interest the condition number is inversely proportional to the (normalized) distance to the set of ill-posed inputs. To estimate the likeliness of an input being badly conditioned one may either put the emphasis on the set of ill-posed inputs and show that the volume of the tube around it does not grow very fast, or one may try to show that most inputs are ``reasonably well conditioned''. This second approach is a bit delicate, as the condition of a random input may indeed have infinite expectation; only its inverse, the (normalized) distance, which is $1$-Lipschitz by the reverse triangle inequality, always has a first (and 
higher) moment(s). But using concentration of measure arguments one may argue that most inputs lie at a certain distance from the set of ill-posed inputs and deduce from that tail estimates for the condition number.

To make matters a bit more concrete, assume that the input space is~$\IR^d$, the ill-posed inputs form a scale invariant hypersurface $\Sigma\subset \IR^d$, i.e., $\Sigma$ has codimension one in~$\IR^d$
and $\lambda\Sigma=\Sigma$ for all $\lambda>0$, and the condition number of an input $\vct x\in \IR^d$ is given by
\begin{equation}\label{eq:conic-cond}
  \mC (\mtx{x}) = \frac{\norm{\vct{x}}}{\dist(\vct{x},\Sigma)} ,
\end{equation}
where $\dist$ denotes the Euclidean distance.
Denoting by $\vct{g}\sim\normal(\vct 0,\Id_d)$ a Gaussian vector in~$\IR^d$, we are interested in upper bounds for the tail
$\Prob\{\mC(\vct{g})\geq \lambda\}$. Note that
by scale invariance we can assume $\norm{\vct x} = 1$,
so denoting by $\vct \theta\in S^{d-1}$ a uniformly random point on the sphere, we have $\Prob\{\mC(\vct{g})\geq \lambda\}=\Prob\{\mC(\vct{\theta})\geq \lambda\}$.

{\bf (1)} A typical analysis of~$\mC$ motivated by complexity theoretic questions lays its focus on capturing the tail behavior of~$\mC$ at infinity, or, equivalently, the volume (with respect to the probability measure) of an $\veps$-tube around~$\Sigma$ for small~$\veps>0$. In this regime we have
\begin{align*}
   \Prob\{\mC(\vct{\theta})\geq t \} \stackrel{[\veps=1/t]}{=} \Prob\{\dist(\vct{\theta},\Sigma)\leq \e \} = \e\,\vol\Sigma + (\text{higher order terms in } \e) \approx t^{-1}\vol\Sigma ,
\end{align*}
where $\vol\Sigma$ denotes the lower-dimensional volume of~$\Sigma$ induced by the Gaussian measure. 
Typically, it is the logarithm of the condition number that drives complexity bounds for numerical algorithms.
A bound of the form $\Prob\{\mC(\vct{\theta})\geq t \} \leq \tfrac{c_1}{t}$ for some constant $c_1>0$
allows to estimate the expected value of the logarithm of $\mC$ (see~\cite[Prop.~2.4]{BCL:06a}):
\begin{equation}\label{eq:E[log(C(x))]<...}
  \Expect[\log \mC(\vct{\theta})] \leq 1+\log c_1 .
\end{equation}
Such an estimate can be used to derive bounds for the expected running time of numerical algorithms~\cite{Condition}.

{\bf (2)} The second approach focuses on the distribution of the distance to ill-posedness. We denote its expectation by $\mu:=\Expect[\dist(\vct\theta,\Sigma)]$. Keep in mind that the condition number itself~$\mC(\vct\theta)$ has infinite expectation. The concentration of measure phenomenon~\cite{ledo:01} yields
\begin{equation*}
  \Prob\{ \dist(\vct{\theta},\Sigma)\leq \mu-\lambda \}\leq c_2 \exp(-c_3 d \lambda^2) ,
\end{equation*}
for some constants $c_2,c_3>0$, or equivalently, setting $t:=(\mu-\lambda)^{-1}$,
\begin{equation}\label{eq:simple-concentration}
  \Prob\{ \mC(\vct{\theta})\geq t \}\leq c_2 \exp\big(-c_3 d \big(\mu-\tfrac{1}{t}\big)^2\big) .
\end{equation}
An estimate of this form allows precise statements about the order of magnitude of $\mC(\vct{\theta})$ for growing~$d$.
However, observe that for fixed~$d$ (no matter how large) the right-hand side in~\eqref{eq:simple-concentration} does not converge to zero as $t\rightarrow \infty$. This deficiency, among other things, rules out estimates of the logarithm of the condition number as achieved in~\eqref{eq:E[log(C(x))]<...} by the direct method.

\subsubsection{The upshot}

We have chosen the spherical setting in the above discussion as this provides the clearest connection between the two described viewpoints. In particular, in view of the second approach it is often more convenient to work in the Gaussian setting; also because not all condition numbers are exactly of the form~\eqref{eq:conic-cond}, some are of the form
\begin{equation}\label{eq:gen-cond}
  \kappa (\mtx{x}) = \frac{\norm{\vct{x}}_*}{\dist(\vct{x},\Sigma)} ,
\end{equation}
where $\norm{\vct x}_*$ is a more general norm (or Minkowski functional). In this case the spherical and the Gaussian setting are still closely related but not the same. A prominent example of a non-conic condition number is the classical matrix condition: here $\norm{\vct x}$ corresponds to the Frobenius norm, while $\norm{\vct x}_*$ corresponds to the operator norm, which is the more common choice. 
From the above discussion, in particular from~\eqref{eq:simple-concentration} it should have become clear that a central problem is to have \emph{lower bounds} for the expected distance to ill-posedness~$\Expect[\dist(\vct g,\Sigma)]$, and in the non-conic setting one additionally needs \emph{upper bounds} for the expected norm~$\Expect[\norm{\vct g}_*]$. Important tools to achieve such bounds are the inequalities of Slepian and Gordon, which we discuss next.

\subsection{Conically restricted operators and Gordon's inequalities}\label{sec:intro-Gordon-conrelinop}

The classical condition number of a matrix\footnote{Contrary to tradition, we denote the number of rows by $n$ and the number of columns by $m$, so that, as an operator, $\vct A\colon\IR^m\to\IR^n$.} $\vct A\in\IR^{n\times m}$
 is the ratio of the 
operator norm and the smallest singular value. Using the notation
\begin{align*}
   \norm{\vct A} & := \max_{\vct x\in S^{m-1}} \norm{\vct{Ax}} ,\hspace{2cm} \sigma(\vct A) := \min_{\vct x\in S^{m-1}} \norm{\vct{Ax}} ,
\end{align*}
the smallest singular value of~$\vct A$ is given by $\max\big\{\sigma(\vct A),\sigma(\vct A^T)\big\}$, so that the classical condition number is given by
  \[ \kappa(\vct A) = \min\Bigg\{ \frac{\norm{\vct A}}{\sigma(\vct A)},\frac{\norm{\vct A}}{\sigma(\vct A^T)}\Bigg\} . \]
With a view towards the convex feasibility problem, cf.~Section~\ref{sec:intro-conv-feas}/\ref{sec:gen-feas-prob}, we introduce the following generalization: Let $C\subseteq\IR^m$, $D\subseteq\IR^n$ be closed convex cones, and let $\vct A\in\IR^{n\times m}$. We define the \emph{restriction} of the linear operator~$\vct A$ to $C$ and $D$ by
\begin{equation}\label{eq:def-res-lin-op}
  \res{A}{C}{D}\colon C\to D ,\qquad \res{A}{C}{D}(\vct x) := \Proj_D(\vct{Ax}) ,
\end{equation}
where $\Proj_D\colon\IR^n\to D$ denotes the orthogonal projection, i.e., $\Proj_D(\vct y) = \argmin\{ \|\vct y-\vct z\|\mid \vct z\in D\}$. Accordingly, we define restricted versions of the 
norm and the singular value:
\begin{align}\label{eq:def-nres,sres}
   \nres{A}{C}{D} & := \max_{\vct x\in C\cap S^{m-1}} \|\res{A}{C}{D}(\vct x)\| , &
   \sres{A}{C}{D} & := \min_{\vct x\in C\cap S^{m-1}} \|\res{A}{C}{D}(\vct x)\| .
\end{align}
In Section~\ref{sec:con-res-lin-op} we will give a geometric interpretation of these quantities and describe how they appear in applications.

A standard way to bound these quantities for Gaussian operators is by means of Slepian's and Gordon's inequalities~\cite{G:85,LT:91,davidson2001local}, as in the following theorem. As the first of the stated bounds does not seem available in the literature, we provide a proof in Appendix~\ref{sec:proof-comp-thm}.

\begin{theorem}\label{thm:moments-restr-singvals}
Let $C\subseteq\IR^m$, $D\subseteq\IR^n$ closed convex cones, and let $\vct G\in\IR^{n\times m}$ be a Gaussian matrix and $\vct g\in\IR^m$, $\vct g'\in\IR^n$ independent Gaussian vectors. Then for $f\colon\IR\to\IR$ monotonically increasing and convex,
\begin{align}
   \Expect\big[ f(\nres{G}{C}{D})\big] & \leq \Expect\big[ f\big(\|\Proj_D(\vct g')\| + \|\Proj_C(\vct g)\|\big)\big] .
\label{eq:moments-restr-singvals-claim-norm-conv}
\intertext{If $\gamma\in\IR$ denotes a standard Gaussian variable, which is independent of~$\vct G$, then for every monotonically increasing~$f$,}
   \Expect\big[ f(\nres{G}{C}{D}+\gamma)\big] & \leq \Expect\big[ f\big(\|\Proj_D(\vct g')\| + \|\Proj_C(\vct g)\|\big)\big] ,
\label{eq:moments-restr-singvals-claim-norm}
\\ \Expect\big[ f(\sres{G}{C}{D})+\gamma\big] & \geq \Expect\big[ f\big(\|\Proj_D(\vct g')\|-\|\Proj_C(\vct g)\|\big)\big] .
\label{eq:moments-restr-singvals-claim-sval}
\end{align}
\end{theorem}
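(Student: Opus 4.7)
The plan is to represent both $\nres{G}{C}{D}$ and $\sres{G}{C}{D}$ as extrema of a bilinear Gaussian process over a product index set, and to compare them with a simpler additive process by way of Slepian's and Gordon's lemmas. Using the cone identity $\|\Proj_D(v)\|=\max_{y\in D\cap B^n}\langle y,v\rangle$, I would write
\begin{align*}
\nres{G}{C}{D}=\max_{(x,y)\in S_C\times B_D}\langle y,\vct Gx\rangle,\qquad
\sres{G}{C}{D}=\min_{x\in S_C}\max_{y\in B_D}\langle y,\vct Gx\rangle,
\end{align*}
where $S_C:=C\cap S^{m-1}$ and $B_D:=D\cap B^n$. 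Let $X_{x,y}:=\langle y,\vct Gx\rangle$, let $\tilde X_{x,y}:=X_{x,y}+\gamma$ be its augmentation by the independent standard Gaussian $\gamma$, and take as comparison process $Y^{\pm}_{x,y}:=\langle y,\vct g'\rangle\pm\langle x,\vct g\rangle$.

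The core computation is that, over $S_C\times B_D$, the variances match, $\Var(\tilde X_{x,y})=\|y\|^2+1=\Var(Y^{\pm}_{x,y})$, while a direct expansion gives
\begin{align*}
\operatorname{Cov}(\tilde X_{x,y},\tilde X_{x',y'})-\operatorname{Cov}(Y^{\pm}_{x,y},Y^{\pm}_{x',y'})=(1-\langle x,x'\rangle)(1-\langle y,y'\rangle),
\end{align*}
which is nonnegative because $\langle x,x'\rangle\le 1$ for unit vectors and $\langle y,y'\rangle\le\|y\|\|y'\|\le 1$, and which vanishes whenever $x=x'$.

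Slepian's lemma applied with equal variances and $\operatorname{Cov}(\tilde X)\ge\operatorname{Cov}(Y^+)$ then gives the stochastic domination $\max\tilde X\le_{\mathrm{st}}\max Y^+=\|\Proj_D(\vct g')\|+\max_{x\in S_C}\langle x,\vct g\rangle$; combined with the pointwise bound $\max_{x\in S_C}\langle x,\vct g\rangle\le\|\Proj_C(\vct g)\|$, this proves \eqref{eq:moments-restr-singvals-claim-norm}. Claim \eqref{eq:moments-restr-singvals-claim-norm-conv} then drops out by Jensen, since $f$ is convex monotone: $f(\nres{G}{C}{D})\le\Expect_\gamma\bigl[f(\nres{G}{C}{D}+\gamma)\bigr]$. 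For \eqref{eq:moments-restr-singvals-claim-sval} the within-$x$ covariances of $\tilde X$ and $Y^-$ agree exactly, while across-$x$ we have $\operatorname{Cov}(\tilde X)\ge\operatorname{Cov}(Y^-)$; these are precisely Gordon's min-max hypotheses (with $\tilde X$ playing the role of the larger process), yielding $\min_x\max_y\tilde X\ge_{\mathrm{st}}\min_x\max_y Y^-=\|\Proj_D(\vct g')\|-\max_{x\in S_C}\langle x,\vct g\rangle\ge\|\Proj_D(\vct g')\|-\|\Proj_C(\vct g)\|$ pointwise, and monotonicity of $f$ closes the argument.

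The main obstacle is the variance mismatch between $X_{x,y}$ (variance $\|y\|^2$) and $Y^{\pm}_{x,y}$ (variance $\|y\|^2+1$) which prevents a direct stochastic application of Slepian and Gordon; the auxiliary $\gamma$ is the canonical device that restores equality of variances, and this is also the structural reason for the $+\gamma$ appearing on the left-hand sides of \eqref{eq:moments-restr-singvals-claim-norm} and \eqref{eq:moments-restr-singvals-claim-sval}. A secondary subtlety is that $\max_{x\in S_C}\langle x,\vct g\rangle$ may be strictly smaller than $\|\Proj_C(\vct g)\|$ when $\vct g\in C^\circ$; happily, this one-sided slack always tips in the direction consistent with the desired bound—as an upper bound in \eqref{eq:moments-restr-singvals-claim-norm} and as a lower bound in \eqref{eq:moments-restr-singvals-claim-sval}.
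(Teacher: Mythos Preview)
Your proof is correct. At the technical core it coincides with the paper's argument: the paper reformulates Slepian's and Gordon's lemmas as monotonicity of ``moment functionals'' under \emph{contractions} of convex bodies and bundles, then verifies that the map $(\vct x,\vct y)\mapsto \vct x\otimes\vct y$ (resp.\ its affine lift) defines such a contraction. Unwinding that machinery one recovers precisely your covariance identity $(1-\langle\vct x,\vct x'\rangle)(1-\langle\vct y,\vct y'\rangle)\ge 0$ on $S_C\times B_D$ and the $\gamma$-augmentation to equalize variances; so for \eqref{eq:moments-restr-singvals-claim-norm} and \eqref{eq:moments-restr-singvals-claim-sval} the two arguments are the same.

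The one genuine difference is your treatment of \eqref{eq:moments-restr-singvals-claim-norm-conv}. The paper does \emph{not} derive it from \eqref{eq:moments-restr-singvals-claim-norm}; instead it proves a new Slepian-type variant (their Theorem~\ref{prop:gordon-with-0}, phrased geometrically as Proposition~\ref{prop:slep-ext}(3)) that directly handles the case $\|\vp(\vct z)\|\le\|\vct z\|$ under convex monotone $f$, and applies it to the ``$\vct0$-contraction'' $(\vct x,\vct y)\mapsto\vct x\otimes\vct y$ on $(\{\vct0\}\cup S_C)\times B_D$. Your route---prove \eqref{eq:moments-restr-singvals-claim-norm} first, then use Jensen via $f(\nres{G}{C}{D})=f(\Expect_\gamma[\nres{G}{C}{D}+\gamma])\le\Expect_\gamma[f(\nres{G}{C}{D}+\gamma)]$---is shorter and entirely elementary for this particular statement. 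What the paper's extra work buys is a standalone comparison principle (their Proposition~\ref{prop:slep-ext}(3)) that they reuse elsewhere, notably in Proposition~\ref{prop:moments-res.norm} for linear images of cones, where no auxiliary $\gamma$ is available to run the Jensen trick.
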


As a corollary from~\eqref{eq:moments-restr-singvals-claim-norm-conv} we obtain another inequality that turns out to be useful in the area of conic integral geometry that we describe in Section~\ref{sec:intro-conic-integr-geom} below.

\begin{corollary}
Let $C\subseteq\IR^m$, $D\subseteq\IR^n$ closed convex cones, and let $\vct T\in\IR^{\ell\times m}$ and $\vct U\in\IR^{p\times n}$. Then for $r\geq1$,
\begin{equation}\label{eq:bod-lin-img}
   \Expect\big[ \|\tilde{\vct G}\|_{\vct TC\to\vct UD}^r\big] \leq \kappa(\vct T)^r\,\kappa(\vct U)^r\,\Expect\big[ \nres{G}{C}{D}^r\big] ,
\end{equation}
where $\tilde{\vct G}\in\IR^{p\times \ell}$ and $\vct G\in\IR^{n\times m}$ are Gaussian matrices.
\end{corollary}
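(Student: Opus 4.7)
The plan is to split the target factor $\kappa(\vct T)^{r}\kappa(\vct U)^{r}$ into two pieces: a deterministic inequality absorbing the $\sigma_{\min}(\vct T)^{-r}\sigma_{\min}(\vct U)^{-r}$ factors, and a Slepian-type Gaussian comparison---of the same flavor as the moment monotonicity that powers \eqref{eq:moments-restr-singvals-claim-norm-conv}---absorbing the $\|\vct T\|^{r}\|\vct U\|^{r}$ factors. Without loss of generality $\vct T,\vct U$ are both injective, since otherwise $\kappa(\vct T)\kappa(\vct U)=\infty$ and the claim is vacuous.

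First I would write the left-hand side in bilinear form,
\[
\|\tilde{\vct G}\|_{\vct TC\to\vct UD}=\sup\bigl\{\vct u^{T}\vct U^{T}\tilde{\vct G}\vct T\vct x:\vct x\in C,\ \vct u\in D,\ \|\vct T\vct x\|=1,\ \|\vct U\vct u\|\le1\bigr\},
\]
and enlarge the feasible set using $\|\vct T\vct x\|\ge\sigma_{\min}(\vct T)\|\vct x\|$ and $\|\vct U\vct u\|\ge\sigma_{\min}(\vct U)\|\vct u\|$. Homogeneity in $\vct x,\vct u$ then gives the deterministic bound
\[
\|\tilde{\vct G}\|_{\vct TC\to\vct UD}\le\frac{\|\vct U^{T}\tilde{\vct G}\vct T\|_{C\to D}}{\sigma_{\min}(\vct T)\,\sigma_{\min}(\vct U)},
\]
and after taking $r$-th powers and expectations the denominator migrates into the desired prefactor.

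Next I would reduce $\vct U^{T}\tilde{\vct G}\vct T$ to a standard Gaussian up to orthogonal changes of the cones. Using SVDs $\vct T=\vct V_{T}\vct\Sigma_{T}\vct Q_{T}^{T}$ and $\vct U=\vct V_{U}\vct\Sigma_{U}\vct Q_{U}^{T}$ with $\vct V_{T},\vct V_{U}$ having orthonormal columns, a short covariance check shows that $\vct G':=\vct V_{U}^{T}\tilde{\vct G}\vct V_{T}\in\IR^{n\times m}$ is again standard Gaussian, so rotation-invariance of the restricted norm yields
\[
\|\vct U^{T}\tilde{\vct G}\vct T\|_{C\to D}\stackrel{d}{=}\|\vct\Sigma_{U}\vct G'\vct\Sigma_{T}\|_{C'\to D'},
\]
where $C':=\vct Q_{T}^{T}C$ and $D':=\vct Q_{U}^{T}D$ are orthogonal copies of $C,D$; in particular $\|\vct G'\|_{C'\to D'}\stackrel{d}{=}\|\vct G\|_{C\to D}$.

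The hard part will be the remaining comparison
\[
\Expect\bigl[\|\vct\Sigma_{U}\vct G'\vct\Sigma_{T}\|_{C'\to D'}^{r}\bigr]\le\|\vct T\|^{r}\|\vct U\|^{r}\,\Expect\bigl[\|\vct G'\|_{C'\to D'}^{r}\bigr].
\]
I would view it as a Slepian comparison between the bilinear Gaussian processes $X_{\vct x,\vct u}=\vct u^{T}\vct\Sigma_{U}\vct G'\vct\Sigma_{T}\vct x$ and $Y_{\vct x,\vct u}=\|\vct T\|\|\vct U\|\,\vct u^{T}\vct G'\vct x$ indexed by $(\vct x,\vct u)\in(C'\cap S^{m-1})\times(D'\cap B^{n})$. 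The increment comparison reduces, with $\vct N=\vct u_{1}\vct x_{1}^{T}-\vct u_{2}\vct x_{2}^{T}$, to the one-line submultiplicative estimate $\|\vct\Sigma_{U}\vct N\vct\Sigma_{T}\|_{F}\le\|\vct U\|\|\vct T\|\|\vct N\|_{F}$ coming from $\vct\Sigma_{U}^{2}\preccurlyeq\|\vct U\|^{2}\Id$ and $\vct\Sigma_{T}^{2}\preccurlyeq\|\vct T\|^{2}\Id$. Sudakov-Fernique then delivers the $r=1$ case; the upgrade to $f(t)=(t^{+})^{r}$ for general $r\ge1$ is precisely the moment-functional version of Slepian's inequality (valid for all convex monotone $f$) that furnishes \eqref{eq:moments-restr-singvals-claim-norm-conv} itself. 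Combining the three displays with $\kappa(\vct T)=\|\vct T\|/\sigma_{\min}(\vct T)$ and $\kappa(\vct U)=\|\vct U\|/\sigma_{\min}(\vct U)$ finishes the proof.
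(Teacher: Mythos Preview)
Your argument is correct. It differs from the paper's route in two respects. First, the paper actually proves the stronger Proposition~\ref{prop:moments-res.norm} with Renegar's condition numbers $\Ren_C(\vct T),\Ren_D(\vct U)$ in place of $\kappa(\vct T),\kappa(\vct U)$, and then the stated corollary follows by the general inequality $\Ren_C(\vct T)\leq\kappa(\vct T)$; your deterministic step, which simply enlarges $\{\|\vct T\vct x\|=1\}$ to $\{\|\vct x\|\leq 1/\sigma_{\min}(\vct T)\}$, only recovers the $\kappa$ bound. Second, and more structurally, the paper treats the two linear maps one at a time: it shows via Lemma~\ref{lem:TC-2} that $K\prtensor \vct UK'$ is a $\vct0$-contraction of $K\prtensor\|\vct U\|K'$ (no SVD needed, the argument works for arbitrary $\vct U$), applies the convex Slepian inequality, and then swaps $C$ and $D$ by the symmetry $\nres{G}{C}{D}=\nrest{G}{D}{C}$ to handle $\vct T$. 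You instead reduce both sides to diagonal scalings $\vct\Sigma_T,\vct\Sigma_U$ via SVD and run a single Slepian comparison on the full bilinear process. Your route is more direct for the $\kappa$ version and avoids the separate contraction Lemma~\ref{lem:TC-2}; the paper's route is more modular, does not rely on injectivity or orthogonal reductions, and immediately yields the sharper Renegar bound.
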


In fact, the above corollary holds with Renegar's condition number instead of the usual matrix one, cf.~Proposition~\ref{prop:moments-res.norm}.
As special cases we obtain condition based estimates of moment functionals such as the Gaussian width and the statistical dimension (see Section~\ref{sec:mom-func}). The idea of using Slepian's lemma to obtain condition number estimates for the Gaussian width of linear images of convex cones was suggested to us by Michael~B.~McCoy.

We will see that~\eqref{eq:bod-lin-img} may fail for~$r<1$. This will also show that~\eqref{eq:moments-restr-singvals-claim-norm-conv} may fail if~$f$ is not convex.

\subsection{The convex feasibility problem}\label{sec:intro-conv-feas}

The primal and dual (homogeneous) feasibility problems with reference cone~$C\subseteq\IR^m$ are the decision problems
\\\def\tmpX{3mm}
\begin{minipage}{0.46\textwidth}
\begin{align}
   \exists \vct x & \in C\setminus\{\vct0\} \quad\text{s.t.} \hspace{\tmpX} \vct{Ax}=\vct0 ,
\tag{P}
\end{align}
\end{minipage}
\rule{0.07\textwidth}{0mm}
\begin{minipage}{0.46\textwidth}
\begin{align}
   \exists \vct y & \in\IR^n\setminus\{\vct0\} \quad\text{s.t.} \hspace{\tmpX} -\vct A^T\vct y\in C^\polar ,
\tag{D}
\end{align}
\end{minipage}
\\[2mm] where $\vct A\in\IR^{n\times m}$ and $C^\polar = \{\vct z\in\IR^m\mid \langle \vct x,\vct z\rangle \leq 0 \text{ for all } \vct x\in C\}$ denotes the polar cone of~$C$.
Special cases of interest in conic optimization are when~$C$ is the non-negative orthant, the second-order cone, or the cone of positive semidefinite matrices~\cite{BV:04}. Other cases of interest, that include compressed sensing, are when~$C$ is the descent cone of a convex regularizer, in which case (the negation of)~(P) is sometimes referred to as a \emph{nullspace condition}.

In the spirit of~\cite{rene:95a} we consider the following convex feasibility problem in the setting with two nonzero closed convex cones $C\subseteq\IR^m$, $D\subseteq\IR^n$:
\\\def\tmpX{3mm}
\begin{minipage}{0.46\textwidth}
\begin{align}
   \exists \vct x & \in C\setminus\{\vct0\} \quad\text{s.t.} \hspace{\tmpX} \vct{Ax} \in D^\polar ,
\tag{P}
\label{eq:(Pintro)}
\end{align}
\end{minipage}
\rule{0.07\textwidth}{0mm}
\begin{minipage}{0.46\textwidth}
\begin{align}
   \exists \vct y & \in D\setminus\{\vct0\} \quad\text{s.t.} \hspace{\tmpX} -\vct A^T\vct y\in C^\polar .
\tag{D}
\label{eq:(Dintro)}
\end{align}
\end{minipage}
\\[2mm] 
Note that we have a complete symmetry between~(P) and~(D) via the exchange of $\vct A$ by $-\vct A^T$.
We denote the set of primal feasible instances and the set of dual feasible instances by
\begin{align*}
   \P(C,D) & := \{\vct A\in\IR^{n\times m}\mid \text{(P) is feasible}\} , & \D(C,D) & := \{\vct A\in\IR^{n\times m}\mid \text{(D) is feasible}\},
\end{align*}
and we call
  \[ \Sigma(C,D) := \P(C,D)\cap \D(C,D) \]
the set of \emph{ill-posed} inputs. Indeed, we will see that $\P(C,D)$ and $\D(C,D)$ are both closed, the union of $\P(C,D)$ and $\D(C,D)$ is the whole input space~$\IR^{n\times m}$ unless $C=D=\IR^m$, and the probability that a Gaussian matrix lies in $\Sigma(C,D)$ is zero (Section~\ref{sec:gen-feas-prob}).

As for the relation of this generalized feasibility problem to conically restricted linear operators, observe that $\vct A\in\P(C,D)$ if and only if $\sres{A}{C}{D}=0$, and $\vct A\in\D(C,D)$ if and only if $\srestm{A}{D}{C}=0$. Moreover, we will see in Section~\ref{sec:gen-feas-prob} that
\begin{align*}
  \dist(\vct A,\P) & = \sres{A}{C}{D} , & \dist(\vct A,\D) & = \srestm{A}{D}{C} ,
\end{align*}
where $\dist$ denotes the Euclidean distance.

The restricted singular value has also found applications in the context of linear inverse problems~\cite{CRPW:12}.
More precisely,
consider the problem of recovering an unknown signal $\vct{x}_0\in \R^m$ from noisy observations $\vct{z}_0=\mtx{A}\vct{x}_0+\vct{w}$, with $\mtx{A}\in \R^{n\times m}$ and $\norm{\vct{w}}\leq \e$, by solving the
optimization problem
\begin{equation*}
 \minimize \|\vct{x}\|_* \subjto \norm{\mtx{A}\vct{x}-\vct{z}_0}\leq \e,
\end{equation*}
where $\|.\|_*$ is a convex function, usually a suitably chosen norm (a typical example is when $\vct{x}_0$ is sparse and $\|\vct{x}\|_*=\lone{\vct{x}}$). If $\hat{\vct{x}}$ is a solution of the above problem and $C=\Desc(\|.\|_*,\vct{x}_0)$ denotes the cone of descent
directions of $\|.\|_*$, then the error~$\norm{\hat{\vct{x}}-\vct{x}_0}$ is bounded by~\cite[Prop.~2.2]{CRPW:12}
\begin{equation*}
 \norm{\hat{\vct{x}}-\vct{x}_0} \leq \frac{2\e}{\sres{A}{C}{\R^n}}.
\end{equation*}
The more natural setting where the norm of the perturbation is proportional to the norm of the measurement matrix, 
$\norm{\vct{w}}\leq \e\norm{\mtx{A}}$, leads directly to the consideration of a common condition number for conic optimization,
to be introduced next.

Renegar's condition number is given by
\begin{equation*}
  \RCD{A}{C}{D} := \frac{\|\vct A\|}{\dist(\vct A,\Sigma(C,D))} = \min\Bigg\{\frac{\|\vct A\|}{\sres{A}{C}{D}},\frac{\|\vct A\|}{\srestm{A}{D}{C}}\Bigg\} . 
\end{equation*}
Besides the above mentioned application, Renegar's condition number has originally been used to estimate the running time of interior point algorithms that solve the convex feasibility problem (in the case $D=\IR^n$). An average-case analysis of this condition number has been given in~\cite{AB:13} (for previous analyses in the context of linear programming see~\cite{Condition} and the references therein).
This analysis does not use Gordon's inequalities, but relies on certain quantities stemming from the domain of integral geometry.

In the following section we provide a short overview of the main ideas of conic integral geometry. One aim of this paper is to highlight connections between the exact theory of integral geometry and the approximate methods centering around the concentration of measure phenomenon, and to provide some first applications of Gordon's inequalities in this neighboring area.

\subsection{Conic integral geometry}\label{sec:intro-conic-integr-geom}

The theory of conic integral geometry centers around the \emph{intrinsic volumes} $v_0(C),\ldots,v_m(C)$, which are assigned to every closed convex cone $C\subseteq\IR^m$. They form a discrete probability distribution on $\{0,\ldots,m\}$ that captures statistical properties of the cone~$C$. 
For example, the (conic) Steiner formula~\eqref{eq:Steiner-conic} describes the Gaussian measure of a neighborhood of $C$, while the kinematic formulas~\cite{edge} describe the exact intersection probabilities of randomly oriented cones.
Instead of giving the exact definition of these quantities (see Section~\ref{sec:rel-intr-vols}), we remark that the moment generating function of this discrete probability distribution coincides, after a simple variable transformation, with the moment generating function of~$\|\Pi_C(\vct g)\|$, where~$\vct g$ denotes as usual a standard Gaussian vector in~$\IR^m$ and~$\Pi_C$ denotes the orthogonal projection on~$C$, as shown by McCoy and Tropp~\cite{MT:13}.
Moreover, the expectation of the discrete probability distribution given by the intrinsic volumes, which is called the~\emph{statistical dimension} of~$C$, coincides with the 
expectation of the squared projected length $\Expect\big[\|\Pi_C(\vct{g})\|^2\big]$. This evinces a close relation between the statistical dimension of~$C$ and the Gaussian width of~$C\cap S^{m-1}$, see Section~\ref{sec:gwidth-edim-sdim} for a discussion.

In Section~\ref{sec:rel-intr-vols} we present an alternative perspective on conic integral geometry through \emph{moment functionals}. 
The moment functionals evaluated in a convex body~$K$ describe the moments of $h_K(\vct g)$, where $h_K(\vct{x})=\max_{\vct{z}\in K}\ip{\vct{z}}{\vct{x}}$ is the support function of~$K$ (see Section~\ref{sec:conv-geom-restr-norm-sval}) and $\vct g$ is a Gaussian vector,
\begin{equation*}
 \mu_f(K) = \Expect\big[f(h_K(\vct{g}))\big] .
\end{equation*}
Slepian's lemma and its extensions can be interpretated as monotonicity properties of moment functionals with respect to contractions (Proposition~\ref{prop:slep-ext}); 
direct consequences of these monotonicity properties are the bounds~\eqref{eq:moments-restr-singvals-claim-norm-conv},~\eqref{eq:moments-restr-singvals-claim-norm}, and~\eqref{eq:bod-lin-img}.
The moment functionals generalize to support functions of {\em convex bundles} (see Section~\ref{sec:contr-ineq-bdls}), which leads to the inequality~\eqref{eq:moments-restr-singvals-claim-sval}.

The main motivation for introducing moment functionals was to give a geometric interpretation of Slepian's and Gordon's inequalities and to simplify the use of these inequalities. Moreover, this concept highlights the difference between the largest and the smallest singular value of a (conically restricted) operator and it helps in establishing connections to conic and Euclidean integral geometry.

\subsection{Context and contributions}

We finish this introduction by putting the work into a broader context and describe the specific contributions made.

\subsubsection{Condition numbers and random matrices}\label{sec:condnumbs-randommatr}
Motivated by problems from nuclear physics, classical random matrix theory has traditionally been concerned with the limiting distributions 
of the spectrum of a matrix as the dimension grows to infinity.
For a square Gaussian $(m\times m)$-matrix~$\mtx{G}$ one has the asymptotics $\|\vct G\|\sim\sqrt{m}$ and $\sigma(\vct G)\sim\frac{1}{\sqrt{m}}$,
whereas for a rectangular $(n\times m)$-matrix, $n>m$, one has asymptotically (with $\frac{m}{n}$ converging to a limit in~$(0,1)$) $\|\vct G\|\sim\sqrt{n}+\sqrt{m}$ and $\sigma(\vct G)\sim\sqrt{n}-\sqrt{m}$~\cite{bai1993,RV:09rect}.
Other fields of applications such as statistics, signal processing and numerical analysis, however, have led to an interest in properties of random matrices that hold for finite values of $n$ and $m$. 
The typical kind of questions asked within each of the two cultures mentioned in the beginning are:
\begin{enumerate}
\item How big is the expected logarithm of the condition number? How small is the probability that the condition number of a random matrix exceeds, say, $10^2$?
\item What is the probability that the extremal singular values of a random matrix are close to their asymptotic limiting values?
\end{enumerate}

For square matrices $n=m$, the precise distribution of the matrix condition number 
$\kappa(\mtx{G})=\norm{\mtx{G}}/\sigma(\mtx{G})$ of has been derived by Alan Edelman~\cite{Edelman88}. From his analysis, it follows that the expected logarithm of the condition number of $\mtx{G}$ satisfies the bounds
\begin{equation}\label{eq:log-cond}
  \Expect[\log \kappa(\mtx{G})] \leq \log m + 1.537.
\end{equation}
Moreover, the following type of tail estimates are known~\cite{azais2004upper,CD:05,Edelman2005tails},
\begin{equation}\label{eq:tail-cond}
  c t^{-(n-m+1)}\leq \Prob\{\kappa(\mtx{G})\geq t\} \leq C t^{-(n-m+1)},
\end{equation}
with constants that 
depend polynomially on $m,n$.
As already remarked in~\eqref{eq:E[log(C(x))]<...}, a standard argument shows that an estimate of the form $\Expect[\log \kappa(\mtx{G})]=O(\log n)$ follows from~(\ref{eq:tail-cond}). 

By the Eckart-Young Theorem, the smallest singular value is the distance of a matrix to the set of singular matrices. 
Moreover, other condition numbers can be interpreted as (normalized) inverse distance to a set of ill-posed inputs as well~\cite{Demmel87}, if not defined in this way~\cite{rene:95b,rene:95a}.
Bounds of the type~(\ref{eq:tail-cond}) can thus be derived conveniently using {\em tubular neighborhoods}, as was pointed out by Demmel~\cite{Demmel87,Demmel88}.
This method works well
for estimating the condition number of square matrices and generalizations to the condition number of other problems.
A common generalization of average-case analysis is smoothed analysis~\cite{ST:02,Wsch:04,BCL:08,BuergICM,BC:10}, which studies the behavior of condition numbers under small perturbations.

From the point of view of the second culture, the focus of what is known as non-asymptotic random matrix theory (NRMT) does not
lie on condition numbers per se, but in deriving effective versions of the asymptotic limiting results for the spectra of random matrices,
in the sense that classical measure concentration results for the sums of random variables provide effective versions of the central limit theorem~\cite{rudelson2010non,Vershynin2012}.
A method of choice in this type of analysis when dealing with Gaussian matrices is Gordon's comparison inequality. 
Applied to the smallest and largest singular values, Gordon's theorem implies (e.g.,~\cite[Theorem 5.32]{Vershynin2012}),
\begin{equation*}
  \sqrt{n}-\sqrt{m} \leq \Expect[\sigma(\mtx{G})]\leq \Expect[\norm{\mtx{G}}]\leq \sqrt{n}+\sqrt{m}.
\end{equation*}
In combination with standard measure concentration, one obtains sharp bounds for the probability that the condition number is close to $C\cdot (\sqrt{n}+\sqrt{m})/(\sqrt{n}-\sqrt{m})$.

While smoothed analysis was the ``generalization of choice'' when focusing on tail estimates and expectation of condition numbers in the context of computational complexity theory, 
the results in 
NRMT tend to hold for {\em sub-Gaussian} random matrix ensembles. This is natural, given the analogy to concentration results for
sums of random variables as effective interpretations of the central limit theorem, which is obviously not restricted to Gaussian distributions. For an overview of results in this 
stream of developments, see~\cite{rudelson2010non}. We remark that also a hybrid of these generalizations, a smoothed analysis of the matrix condition number under rather general conditions on the distribution is available~\cite{TV:07}.

\subsubsection{Contributions}

The main motivation for this work was to show and clarify connections between non-asymptotic random matrix theory (NRMT), the theory of condition numbers, and integral geometry in the setting of convex cones
and operators between them.
To establish this we introduce the notion of \emph{conically restricted operators} and show that the methods from NRMT based on the concentration of measure phenomenon and the inequalities of Slepian and Gordon, which were previously used to analyze the spectrum of Gaussian matrices, naturally extend to this more general setting. On the other hand we introduce the \emph{biconic feasibility problem}, and show that the distance to primal/dual feasibility coincides with the singular value of the conically restricted linear operator. Having established this connection we clarify what kind of condition number estimates result from the above-mentioned methods from NRMT.

On the theoretical side, we introduce the concept of \emph{moment functionals} on convex bodies and convex bundles, and show that Slepian's lemma and Gordon's inequality can be interpreted as monotonicity properties of these moment functionals. This simplifies the use of these inequalities in finite dimensional settings. Furthermore, we establish a close connection between moment functionals and conic integral geometry. This connection, on the one hand, lets us apply Slepian's lemma to obtain some novel estimates of important integral-geometric quantities. On the other hand, we use integral-geometric arguments to obtain exact formulas for certain events for which NRMT arguments only provide bounds. We illustrate the usefulness of these exact formulas by providing an example that shows the necessity of an assumption in an extension of Gordon's inequality that we provide in this work, which also seems to be novel.

\subsection{Acknowledgments}
We thank Michael B.~McCoy for suggesting the use of Slepian's inequality 
in relation to linear images of cones, and for pointing out to us Maurer's article~\cite{M:11}, and Joel Tropp for useful comments.

\section{Conically restricted linear operators}\label{sec:con-res-lin-op}

In this section we discuss the restriction of a linear operator to closed convex cones. Our focus will not be on the restriction itself~\eqref{eq:def-res-lin-op}, but rather on the restricted norm and the restricted (smallest) singular value~\eqref{eq:def-nres,sres}.
In Section~\ref{sec:restr-norm-sval} we derive general properties of these quantities and compare them to the unrestricted versions, in Section~\ref{sec:gen-feas-prob} we establish a relation to the generalized homogeneous feasibility problem, and in Section~\ref{sec:conv-geom-restr-norm-sval} we derive a convex geometric perspective.

\subsection{Restricted norm and restricted singular value}\label{sec:restr-norm-sval}

Before discussing conically restricted operators, we record the following simple but useful lemma, which generalizes the relation $\ker\vct A=(\ima \vct A^T)^\bot$.

\begin{lemma}\label{lem:A^(-1)(D^*)}
Let $D\subseteq\IR^n$ closed convex cone. Then the polar cone is the inverse image of the origin under the projection map, $D^\polar := \{\vct z\in\IR^n\mid \langle \vct y,\vct z\rangle \leq 0 \text{ for all } \vct y\in D\} = \Proj_D^{-1}(\vct0)$. Furthermore, if $\vct A\in\IR^{n\times m}$, then
\begin{equation}\label{eq:A^(-1)(D^*)}
  \vct A^{-1}(D^\polar) = \big( \vct A^T D\big)^\polar ,
\end{equation}
where $\vct A^{-1}(D^\polar)=\{\vct x\in\IR^m\mid \vct{Ax}\in D^\polar\}$ denotes the inverse image of~$D^\polar$ under~$\vct A$.
\end{lemma}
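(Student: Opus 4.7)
The plan is to handle the two claims in sequence, both via the defining variational inequality of the metric projection onto a closed convex set.

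For the first claim, I would characterize $\Proj_D(\vct z)=\vct 0$ using the standard obtuse-angle criterion for projection onto a closed convex set: $\Proj_D(\vct z)=\vct y^\star$ iff $\vct y^\star\in D$ and $\langle \vct z-\vct y^\star,\vct y-\vct y^\star\rangle\leq 0$ for every $\vct y\in D$. Applying this with $\vct y^\star=\vct 0\in D$ (which is in $D$ since $D$ is a cone) reduces the condition to $\langle \vct z,\vct y\rangle\leq 0$ for all $\vct y\in D$, which is precisely the defining condition $\vct z\in D^\polar$. This proves $\Proj_D^{-1}(\vct 0)=D^\polar$.

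For the second claim, I would unfold the definitions and swap the matrix across the inner product. A vector $\vct x\in\IR^m$ satisfies $\vct x\in\vct A^{-1}(D^\polar)$ iff $\vct{Ax}\in D^\polar$, i.e.\ $\langle \vct{Ax},\vct y\rangle\leq 0$ for all $\vct y\in D$. By $\langle \vct{Ax},\vct y\rangle=\langle \vct x,\vct A^T\vct y\rangle$, this is equivalent to $\langle \vct x,\vct z\rangle\leq 0$ for every $\vct z$ in the set $\vct A^T D=\{\vct A^T\vct y\mid \vct y\in D\}$. That is the definition of $\vct x\in (\vct A^T D)^\polar$ (noting that the polar of a set and of its conic hull coincide, so one need not worry about whether $\vct A^T D$ is a cone in the strictly closed-convex sense).

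There is no real obstacle here: both parts are immediate from the variational characterization of the projection and the adjoint relation $\langle \vct{Ax},\vct y\rangle=\langle \vct x,\vct A^T\vct y\rangle$. The only mild subtlety worth a line of comment is that the equality $\Proj_D^{-1}(\vct 0)=D^\polar$ uses crucially that $\vct 0\in D$, which holds because $D$ is a cone; the identity would fail for a general closed convex set not containing the origin.
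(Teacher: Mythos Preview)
Your proposal is correct. The second claim is handled essentially as in the paper: both arguments reduce to the adjoint identity $\langle \vct{Ax},\vct y\rangle=\langle \vct x,\vct A^T\vct y\rangle$, with you writing it as a single chain of equivalences and the paper splitting it into two inclusions.

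For the first claim the tools differ slightly. You invoke the variational (obtuse-angle) characterization of the metric projection and specialize to $\vct y^\star=\vct 0$, which is the textbook route and makes the role of $\vct 0\in D$ explicit. The paper instead uses the identity $\|\Proj_D(\vct z)\|=\max_{\vct y\in D\cap B^n}\langle \vct z,\vct y\rangle$ and observes that this maximum vanishes exactly when $\langle \vct z,\vct y\rangle\leq 0$ for all $\vct y\in D$. Both are one-line arguments; yours is arguably more self-contained since the support-function identity the paper uses itself rests on the Moreau decomposition or an equivalent fact, whereas the variational inequality is the primitive characterization of projection.
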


\begin{proof}
For the first claim, note that $\|\Proj_D(\vct{z})\|=\max_{\vct{y}\in D\cap B^{n}}\ip{\vct{z}}{\vct{y}}$, and $\max_{\vct{y}\in D\cap B^{n}}\ip{\vct{z}}{\vct{y}}=0$ is equivalent to $\ip{\vct{z}}{\vct{y}}\leq 0$ for all $\vct y\in D$, i.e.,~$\vct{z}\in D^{\polar}$.

For~\eqref{eq:A^(-1)(D^*)}, let $\vct x\in\vct A^{-1}(D^\polar)$ and $\vct y\in D$. 
Then $\langle \vct x,\vct A^T\vct y\rangle = \langle \vct{Ax},\vct y\rangle\leq 0$, as $\vct{Ax}\in D^\polar$. Therefore, $\vct A^{-1}(D^\polar)\subseteq (\vct A^TD)^\polar$. On the other hand, if $\vct v\in (\vct A^TD)^\polar$ and $\vct{y}\in D$, then $\langle\vct{Av},\vct y\rangle=\langle\vct v,\vct A^T\vct y\rangle\leq 0$, so that $\vct{Av}\in D^\polar$ and hence, $(\vct A^TD)^\polar\subseteq\vct A^{-1}(D^\polar)$.
\end{proof}

Recall from~\eqref{eq:def-nres,sres} that for $\vct A\in\IR^{n\times m}$, $C\subseteq\IR^m$ and $D\subseteq\IR^n$ closed convex cones, the restricted norm and singular value of~$\vct A$ are defined by
$\nres{A}{C}{D} := \max\{\|\res{A}{C}{D}(\vct x)\|\mid \vct x\in C\cap S^{m-1}\}$ and $\sres{A}{C}{D} := \min\{\|\res{A}{C}{D}(\vct x)\|\mid \vct x\in C\cap S^{m-1}\}$, respectively, where $\res{A}{C}{D}(\vct x) = \Proj_D(\vct{Ax})$. The following proposition provides geometric conditions for the vanishing of the restricted norm or singular value.

\begin{proposition}\label{prop:nres=0,sres=0}
Let $\vct A\in\IR^{n\times m}$, $C\subseteq\IR^m$ and $D\subseteq\IR^n$ closed convex cones. Then the restricted norm vanishes, $\nres{A}{C}{D} = 0$, if and only if $C\subseteq (\vct A^TD)^\polar$. Furthermore, the restricted singular value vanishes, $\sres{A}{C}{D} = 0$, if and only if $C\cap (\vct A^TD)^\polar\neq\{\vct0\}$, which is equivalent to $\vct AC\cap D^\polar\neq\{\vct0\}$ or $\ker\vct A\cap C\neq\{\vct0\}$.
\end{proposition}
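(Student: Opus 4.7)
The plan is to reduce everything to Lemma~\ref{lem:A^(-1)(D^*)}. The key identities I will lean on are $D^\polar = \Proj_D^{-1}(\vct 0)$ and $\vct A^{-1}(D^\polar) = (\vct A^TD)^\polar$, both proved just above the proposition.

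For the first claim, I would argue that both $\vct x \mapsto \|\Proj_D(\vct{Ax})\|$ and the condition $\vct{Ax} \in D^\polar$ are invariant under positive rescaling of $\vct x$, so restricting to the sphere $C \cap S^{m-1}$ is equivalent to quantifying over all of $C$. Then $\nres{A}{C}{D} = 0$ precisely when $\Proj_D(\vct{Ax}) = \vct 0$ for every $\vct x \in C$, which by the first part of the lemma says $\vct{Ax} \in D^\polar$ for every $\vct x \in C$, i.e.\ $C \subseteq \vct A^{-1}(D^\polar) = (\vct A^TD)^\polar$. This is a one-line chain of equivalences once the lemma is invoked.

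For the second claim I would do the same thing existentially: $\sres{A}{C}{D} = 0$ means there exists $\vct x \in C \cap S^{m-1}$ with $\Proj_D(\vct{Ax}) = \vct 0$, equivalently there is a nonzero $\vct x \in C$ with $\vct{Ax} \in D^\polar$, i.e.\ $C \cap (\vct A^TD)^\polar \neq \{\vct 0\}$. Note that existence of a minimizer on the compact set $C \cap S^{m-1}$ (assuming $C \neq \{\vct 0\}$; the trivial case is immediate) makes the ``min = 0'' step rigorous without extra work.

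For the final equivalence, I would split a nonzero $\vct x \in C$ with $\vct{Ax} \in D^\polar$ into two cases by whether $\vct{Ax}$ is zero or not: if $\vct{Ax} = \vct 0$ then $\vct x \in \ker \vct A \cap C \setminus \{\vct 0\}$; if $\vct{Ax} \neq \vct 0$ then $\vct{Ax}$ is a nonzero element of $\vct A C \cap D^\polar$. Conversely, a nonzero element of $\ker \vct A \cap C$ already lies in $C \cap (\vct A^TD)^\polar$ (since $\vct 0 \in D^\polar$), and any nonzero $\vct y = \vct{Ax} \in \vct A C \cap D^\polar$ with $\vct x \in C$ forces $\vct x \neq \vct 0$ and $\vct x \in (\vct A^TD)^\polar$. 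No step here is really an obstacle; the only mild care is that the ``or'' in the statement is genuinely inclusive and that one remembers $C \cap (\vct A^TD)^\polar \neq \{\vct 0\}$ does not separately track whether the witness is in $\ker \vct A$ or not, which is exactly what the case split resolves.
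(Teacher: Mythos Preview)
Your proposal is correct and follows essentially the same route as the paper: both reduce to Lemma~\ref{lem:A^(-1)(D^*)} via $\Proj_D(\vct{Ax})=\vct0 \iff \vct{Ax}\in D^\polar$ and $\vct A^{-1}(D^\polar)=(\vct A^TD)^\polar$, then handle the universal/existential quantifier for the norm/singular value respectively, with a case split on whether $\vct{Ax}=\vct0$ for the final equivalence. You are slightly more explicit than the paper in spelling out the converse direction of the last equivalence and in handling the degenerate case $C=\{\vct0\}$, but there is no substantive difference.
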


\begin{proof}
Using Lemma~\ref{lem:A^(-1)(D^*)} we have $\Proj_D(\vct{Ax})=\vct0$ if and only if $\vct{Ax}\in D^\polar$. This shows $\nres{A}{C}{D} = 0$ if and only if $\vct{Ax}\in D^\polar$ for all $\vct x\in C\cap S^{m-1}$, or equivalently, $C\subseteq \vct A^{-1}(D^\polar)=(\vct A^TD)^\polar$ by~\eqref{eq:A^(-1)(D^*)}. The claim about the restricted singular value follows similarly: $\sres{A}{C}{D} = 0$ if and only if $\vct{Ax}\in D^\polar$ for some $\vct x\in C\cap S^{m-1}$, or equivalently, $C\cap \vct A^{-1}(D^\polar)\neq\{\vct0\}$. If $\vct x\in C\cap \vct A^{-1}(D^\polar)\setminus\{\vct0\}$, then either $\vct{Ax}$ is nonzero or $\vct x$ lies in the kernel of~$\vct A$, which shows the second characterization.
\end{proof}

It is easily seen that the restricted norm is symmetric $\nres{A}{C}{D} = \nrest{A}{D}{C}$,
\begin{equation}\label{eq:symm-nres}
  \nres{A}{C}{D} = \max_{\vct x\in C\cap B^m}\max_{\vct y\in D\cap B^n} \langle \vct{Ax},\vct y\rangle = \max_{\vct y\in D\cap B^n}\max_{\vct x\in C\cap B^m} \langle \vct A^T\vct y,\vct x\rangle = \nrest{A}{D}{C} .
\end{equation}
Such a relation does not hold in general for the restricted singular value. 
In fact, in Section~\ref{sec:gen-feas-prob} we will see that, unless $C=D=\IR^m$, the minimum of~$\sres{A}{C}{D}$ and~$\srestm{A}{D}{C}$ is always zero, if~$C$ and~$D$ have nonempty interior, cf.~\eqref{eq:min(sres(A,C,D),srestm(A,D,C))=0}. And if~$C$ or~$D$ is a linear subspace then $\srestm{A}{D}{C}=\srest{A}{D}{C}$.

\begin{remark}
In the case $C=\IR^m$, $D=\IR^n$, with $n\geq m$, one can characterize the smallest singular value of~$\vct A$ as the inverse of the norm of the (Moore-Penrose) 
\emph{pseudoinverse} of~$\vct A$:
  \[ \sigma(\vct A) = \|\vct A^\dagger\|^{-1} . \]
Such a characterization does \emph{not} hold in general for the restricted singular value, i.e., in general one cannot write $\sres{A}{C}{D}$ as $\nresdag{A}{D}{C}^{-1}$. 
Consider for example the case $D=\IR^n$ and $C$ a circular cone of angle $\alpha$ around some center $\vct p\in S^{m-1}$. Both cones have nonempty interior, but 
letting $\alpha$ go to zero, it is readily seen that $\sres{A}{C}{D}$ tends to $\|\vct{Ap}\|$, while $\nresdag{A}{D}{C}$ tends to $\|\vct p^T\mtx{A}^\dagger\|$, which is in general 
not equal to~$\|\vct{Ap}\|^{-1}$, unless $\transp{\mtx{A}}\mtx{A}=\Id_m$.
\end{remark}

\subsubsection{Cone angles}

One can express the restricted singular value in terms of a restricted norm, if the matrix~$\vct A$ is \emph{well-conditioned} and ``tall and skinny''. We show this in the following proposition for which we need to prepare some notation: for nonzero cones $C,D\subseteq\IR^m$ denote the smallest angle between these cones by
\begin{align*}
   d(C,D) & := \min\big\{ \arccos\langle \vct x,\vct y\rangle\mid \vct x\in C\cap S^{m-1}, \vct y\in D\cap S^{m-1}\big\} ,
\intertext{and denote the capped angle by}
   \bar d(C,D) & := \min\big\{ \arccos\langle \vct x,\vct y\rangle\mid \vct x\in C\cap B^{m}, \vct y\in D\cap B^m\big\} = \min\big\{ d(C,D),\tfrac{\pi}{2}\big\} .
\end{align*}
The quantity $\cos \bar d(C,D)$ coincides with the quantity $\llangle C,D\rrangle$ introduced in~\cite{MT:13b} in the context of convex demixing.

\begin{lemma}\label{lem:nres,sres-I_m}
Let $C,D\subseteq\IR^m$ closed convex cones. Then
\begin{align*}
  \norm{\Id_m}_{C\to D} & = \cos \bar d(C,D) , & \sigma_{C\to D}(\Id_m) & = \sin \bar d(C,D^\polar) .
\end{align*}
\end{lemma}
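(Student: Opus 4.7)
The plan is to derive both identities from the dual (bilinear) representation of the restricted norm, combined with Moreau's orthogonal decomposition for closed convex cones.

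For the first identity, I would start from the symmetric formulation~\eqref{eq:symm-nres}, which gives
\[
   \|\Id_m\|_{C\to D} \;=\; \max_{\vct x\in C\cap B^m}\max_{\vct y\in D\cap B^m}\langle \vct x,\vct y\rangle.
\]
Since the cosine is strictly decreasing on $[0,\pi]$, the inner maximum over $\langle\vct x,\vct y\rangle$ on the unit balls coincides (via the relation $\arccos$) with $\cos$ applied to the minimum of $\arccos\langle\vct x,\vct y\rangle$, which is exactly $\cos \bar d(C,D)$ by definition. So the first equality is immediate from the dual representation and unpacking definitions.

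For the second identity, I would combine the first part with Moreau's decomposition: for every $\vct x\in\IR^m$ and closed convex cone $D$,
\[
   \vct x \;=\; \Proj_D(\vct x)+\Proj_{D^\polar}(\vct x), \qquad \langle \Proj_D(\vct x),\Proj_{D^\polar}(\vct x)\rangle = 0,
\]
so that $\|\Proj_D(\vct x)\|^2 = \|\vct x\|^2 - \|\Proj_{D^\polar}(\vct x)\|^2$. Restricting to $\vct x\in C\cap S^{m-1}$ and taking the minimum, the monotonicity of $\sqrt{\,\cdot\,}$ and of $1-(\cdot)$ turn the infimum on the left into a supremum on the right:
\[
   \sigma_{C\to D}(\Id_m)^2 \;=\; 1-\max_{\vct x\in C\cap S^{m-1}}\|\Proj_{D^\polar}(\vct x)\|^2 \;=\; 1-\|\Id_m\|_{C\to D^\polar}^2.
\]
Applying the first part to the pair $(C,D^\polar)$ yields $1-\cos^2\bar d(C,D^\polar)=\sin^2\bar d(C,D^\polar)$, and since $\bar d\in[0,\pi/2]$ the sine is nonnegative, so taking square roots gives the claimed formula.

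There is no real obstacle beyond properly invoking Moreau's decomposition; the identity $\max_{\vct y\in D\cap B^m}\langle\vct z,\vct y\rangle = \|\Proj_D(\vct z)\|$ used implicitly here is precisely the one established at the start of the proof of Lemma~\ref{lem:A^(-1)(D^*)}, so no additional machinery is required. The cleanest write-up is to prove the first identity first, then feed it into the second with $D$ replaced by $D^\polar$.
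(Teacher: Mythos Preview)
Your proposal is correct and follows essentially the same route as the paper: the first identity is obtained from the bilinear form of the restricted norm and the definition of $\bar d$, and the second from Moreau's (Pythagorean) decomposition together with the first identity applied to $(C,D^\polar)$. The only cosmetic difference is that the paper names the Pythagorean identity explicitly rather than Moreau's decomposition.
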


\begin{proof}
We have
\begin{align*}
  \norm{\Id_m}_{C\to D} & = \max_{\vct{x}\in  C\cap S^{m-1}} \norm{\Proj_D(\vct{x})} = \max_{\vct{x}\in  C\cap B^{m}} \max_{\vct{y}\in D\cap B^m}\ip{\vct{x}}{\vct{y}} = \cos \bar d(C,D) ,
\intertext{and, using the Pythagorean identity $\|\vct x\|^2 = \|\Proj_D(\vct x)\|^2 + \|\Proj_{D^\polar}(\vct x)\|^2$,}
  \sigma_{C\to D}(\Id_m)^2 & = \min_{\vct{x}\in  C\cap S^{m-1}} \norm{\Proj_D(\vct{x})}^2 = \min_{\vct{x}\in  C\cap S^{m-1}} (1-\norm{\Proj_{D^\polar}(\vct{x})}^2)
\\ & = 1-\max_{\vct{x}\in  C\cap S^{m-1}} \norm{\Proj_{D^\polar}(\vct{x})}^2 = 1-\norm{\Id_n}_{C\to D^{\polar}}^2 = \sin^2 \bar d(C,D^{\polar}). \qedhere
\end{align*}
\end{proof}

\begin{proposition}\label{prop:angles}
Let $C\subseteq\IR^m$ and $D\subseteq\IR^n$ nonzero closed convex cones. If $\vct A\in\IR^{n\times m}$ satisfies 
$\vct A^T\vct A=\Id_m$ or $\vct A\vct A^T=\Id_n$, then
\begin{align}\label{eq:restr-norm-sval-well-cdt}
   \nres{A}{C}{D} & = \begin{cases} \cos \bar d(\vct AC,D) & \text{if } n\geq m \\ \cos \bar d(C,\vct A^TD) & \text{if } n\leq m , \end{cases} & \sres{A}{C}{D} & = \begin{cases} \sin \bar d(\vct AC,D^\polar) & \text{if } n\geq m \\ \sin \bar d(C,(\mtx{A}^{T}D)^\polar) & \text{if } n\leq m . \end{cases}
\end{align}
In particular, if $n\geq m$, then
  \[ \sres{A}{C}{D}^2 = 1 - \nres{A}{C}{D^\polar}^2 \; . \]
\end{proposition}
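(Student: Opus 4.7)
The plan is to reduce every case to Lemma~\ref{lem:nres,sres-I_m} by exploiting the isometric behaviour provided by the hypotheses $\vct A^T\vct A = \Id_m$ or $\vct A\vct A^T = \Id_n$. The two isometry conditions are dual: the first says that $\vct A$ preserves norms on $\mathbb{R}^m$; the second says that $\vct A^T$ preserves norms on $\mathbb{R}^n$ (since $\|\vct A^T\vct y\|^2 = \vct y^T\vct A\vct A^T\vct y = \|\vct y\|^2$). Across the four sub-claims, I would treat the two norm-cases together, and the two singular-value-cases together.

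For the two norm identities, I would use the symmetry $\nres{A}{C}{D} = \nrest{A}{D}{C}$ recorded in~\eqref{eq:symm-nres} to swap the roles of $\vct A$ and $\vct A^T$. This reduces both cases to the one where the operator on the \emph{domain side} is an isometry; for instance, if $n\geq m$ and $\vct A^T\vct A = \Id_m$, the map $\vct x\mapsto \vct{Ax}$ is a bijection from $C\cap S^{m-1}$ onto $\vct AC\cap S^{n-1}$, so
\begin{equation*}
\nres{A}{C}{D} = \max_{\vct x\in C\cap S^{m-1}}\|\Proj_D(\vct{Ax})\| = \max_{\vct y\in\vct AC\cap S^{n-1}}\|\Proj_D(\vct y)\| = \|\Id_n\|_{\vct AC\to D},
\end{equation*}
and Lemma~\ref{lem:nres,sres-I_m} finishes the job. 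The case $n\leq m$ with $\vct A\vct A^T=\Id_n$ reduces to the above after applying~\eqref{eq:symm-nres} to trade $\vct A$ for $\vct A^T$.

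For the two singular value identities, the $n\geq m$ case is identical in spirit: the same bijection yields $\sres{A}{C}{D} = \sigma_{\vct AC\to D}(\Id_n)$, and Lemma~\ref{lem:nres,sres-I_m} converts this into $\sin\bar d(\vct AC,D^\polar)$. The $n\leq m$ case is slightly more delicate because now $\vct A$ itself is not an isometry, so we cannot simply push the minimisation through $\vct A$. Instead I would use the dual representation $\|\Proj_D(\vct z)\| = \max_{\vct y\in D\cap B^n}\langle\vct z,\vct y\rangle$ to write
\begin{equation*}
\|\Proj_D(\vct{Ax})\| = \max_{\vct y\in D\cap B^n}\langle \vct x,\vct A^T\vct y\rangle = \max_{\vct z\in \vct A^TD\cap B^m}\langle\vct x,\vct z\rangle = \|\Proj_{\vct A^TD}(\vct x)\|,
\end{equation*}
where the middle equality uses that $\vct A^T$ maps $D\cap B^n$ bijectively onto $\vct A^TD\cap B^m$ (norm preservation of $\vct A^T$, plus the fact that $\vct A^T\vct A$ restricted to the row space of $\vct A$ is the identity, so every element of $\vct A^TD$ pulls back uniquely into $D$). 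Taking the minimum over $C\cap S^{m-1}$ and applying Lemma~\ref{lem:nres,sres-I_m} gives $\sres{A}{C}{D} = \sin\bar d(C,(\vct A^TD)^\polar)$.

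For the ``in particular'' statement, once both formulas in the $n\geq m$ case are established, it reduces to the trigonometric identity $\sin^2\bar d(\vct AC,D^\polar) + \cos^2\bar d(\vct AC,D^\polar) = 1$. The only step that is not completely routine is the bijectivity claim used in the $n\leq m$ singular-value case; I expect that to be the only place where the proof needs a careful sentence or two rather than a one-line verification.
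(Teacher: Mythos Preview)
Your proposal is correct and follows essentially the same approach as the paper: both proofs reduce to Lemma~\ref{lem:nres,sres-I_m} by using the norm-preserving property of~$\vct A$ (when $n\geq m$) to rewrite the optimisation over $C\cap S^{m-1}$ as one over $\vct AC\cap S^{n-1}$, invoke the symmetry~\eqref{eq:symm-nres} for the norm case with $n\leq m$, and in the $n\leq m$ singular-value case pass to the inner maximum and use $\vct A^T(D\cap B^n)=\vct A^TD\cap B^m$. The paper states this last set equality without comment, whereas you justify it; your parenthetical about $\vct A^T\vct A$ on the row space is correct but slightly roundabout---it is simpler to note that $\vct A\vct A^T=\Id_n$ makes $\vct A^T$ injective and norm-preserving, which immediately gives the bijection.
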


\begin{proof}
If $n\geq m$ then $\|\vct{Ax}\|=\|\vct x\|$ and hence, $\vct AC\cap S^{n-1}=\vct A(C\cap S^{m-1})$. It follows that
\begin{align*}
  \norm{\mtx{A}}_{C\to D} & = \max_{\vct{x}\in  C\cap S^{m-1}} \norm{\Proj_D(\vct{Ax})} =\max_{\vct{z}\in  \mtx{A}C\cap S^{n-1}} \norm{\Proj_D(\vct{z})} = \norm{\Id_n}_{\vct AC\to D} = \cos \bar d(\mtx{A}C,D)
\end{align*}
by Lemma~\ref{lem:nres,sres-I_m}. Analogous arguments show $\sres{A}{C}{D} = \sin \bar d(\vct AC,D^\polar)$.

If $n\leq m$ then $\nres{A}{C}{D} = \cos \bar d(C,\vct A^TD)$ follows directly from symmetry~\eqref{eq:symm-nres}. For $\sres{A}{C}{D}$ we compute, using $\vct A^T(D\cap B^n)=\vct A^TD\cap B^m$
\begin{align*}
  \sres{A}{C}{D} & = \min_{\vct x\in  C\cap S^{m-1}} \max_{\vct y\in D\cap B^n} \langle\vct{Ax},\vct y\rangle = \min_{\vct x\in  C\cap S^{m-1}} \max_{\vct y\in D\cap B^n} \langle \vct x,\vct A^T\vct y\rangle = \min_{\vct x\in  C\cap S^{m-1}} \max_{\vct z\in (\vct A^TD)\cap B^m} \langle \vct x,\vct z\rangle
\\ & = \min_{\vct{x}\in  C\cap S^{m-1}} \norm{\Proj_{\vct A^TD}(\vct{x})} = \norm{\Id_m}_{C\to \vct A^TD} = \sin \bar d(C,(\mtx{A}^TD)^\polar) . \qedhere
\end{align*}
\end{proof}

In the case $n\leq m$ the value of $\sres{A}{C}{D}$ depends on the position of $C$ relative to the kernel of~$\vct A$. For example, if $D=\IR^n$ and $\vct A\vct A^T=\Id_m$, then
  \[ \sres{A}{C}{\IR^n} = \sin \bar d(C,\ker\vct A) , \]
as $(\mtx{A}^{T}\IR^n)^\polar=\mtx{A}^{-1}(\{\vct0\})=\ker\vct A$ by~\eqref{eq:A^(-1)(D^*)}. This equality and further relations, in particular in connection to convex programming, is the focus of the following section.

\subsection{The biconic feasibility problem}\label{sec:gen-feas-prob}

Recall from Section~\ref{sec:intro-conv-feas} the convex feasibility problem in the setting with two nonzero closed convex cones $C\subseteq\IR^m$, $D\subseteq\IR^n$:
\\\def\tmpX{3mm}
\begin{minipage}{0.46\textwidth}
\begin{align}
   \exists \vct x & \in C\setminus\{\vct0\} \quad\text{s.t.} \hspace{\tmpX} \vct{Ax} \in D^\polar ,
\tag{P}
\label{eq:(P)}
\end{align}
\end{minipage}
\rule{0.07\textwidth}{0mm}
\begin{minipage}{0.46\textwidth}
\begin{align}
   \exists \vct y & \in D\setminus\{\vct0\} \quad\text{s.t.} \hspace{\tmpX} -\vct A^T\vct y\in C^\polar .
\tag{D}
\label{eq:(D)}
\end{align}
\end{minipage}
\\[2mm] Using Lemma~\ref{lem:A^(-1)(D^*)} and Proposition~\ref{prop:nres=0,sres=0} we obtain the following characterizations of the primal feasible matrices $\P(C,D) := \{\vct A\in\IR^{n\times m}\mid \text{(P) is feasible}\}$,
\begin{align}\label{eq:P(C,D)}
   \P(C,D) & \stackrel{\eqref{eq:A^(-1)(D^*)}}{=} \big\{\vct A\in\IR^{n\times m}\mid C\cap \big(\vct A^T D\big)^\polar\neq\{\vct0\}\big\} \stackrel{\text{[Prop.~\ref{prop:nres=0,sres=0}]}}{=} \{\vct A\in\IR^{n\times m}\mid \sres{A}{C}{D}=0 \} .
\end{align}
By symmetry, we obtain for the dual feasible matrices $\D(C,D) := \{\vct A\in\IR^{n\times m}\mid \text{(D) is feasible}\}$,
\begin{align}\label{eq:D(C,D)}
   \D(C,D) & = \{\vct A\in\IR^{n\times m}\mid D\cap (-\vct A C)^\polar\neq\{\vct0\}\} =  \{\vct A\in\IR^{n\times m}\mid \srestm{A}{D}{C}=0 \} .
\end{align}
In fact, we will see that $\sres{A}{C}{D}$ and $\srestm{A}{D}{C}$ can be characterized as the distances to $\P(C,D)$ and $\D(C,D)$, respectively. 
We defer the proofs for this section to Appendix~\ref{sec:appendix-convex}.

In the following proposition we collect some general properties of $\P(C,D)$ and $\D(C,D)$.

\begin{proposition}\label{prop:primaldual}
Let $C\subseteq\IR^m$, $D\subseteq\IR^n$ closed convex cones with nonempty interior. Then
\begin{enumerate}
  \item $\P(C,D)$ and $\D(C,D)$ are closed;
  \item the union of these sets is given by
     \[ \P(C,D)\cup\D(C,D) = \begin{cases} \{\vct A\in\IR^{m\times m}\mid \det\vct A=0\} & \text{if } C=D=\IR^m \\ \IR^{n\times m} & \text{else} ; \end{cases} \]
  \item the intersection of these sets is nonempty but has zero (Lebesgue) volume, i.e.,
     \[ \Prob\big\{\vct G\in\P(C,D)\cap\D(C,D)\big\} = 0 , \]
        where $\vct G\in\IR^{n\times m}$ Gaussian.
\end{enumerate}
\end{proposition}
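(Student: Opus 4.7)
The plan uses the characterizations $\P(C,D)=\{\vct A:\sres{A}{C}{D}=0\}$ and $\D(C,D)=\{\vct A:\srestm{A}{D}{C}=0\}$ recorded in~\eqref{eq:P(C,D)}--\eqref{eq:D(C,D)}, a separation argument for closed convex cones, and a dimension count for the measure-zero claim. For (i), closedness of $\P(C,D)$ is a compactness argument: given a sequence $\vct A_k\to\vct A$ with witnesses $\vct x_k\in C\cap S^{m-1}$ satisfying $\vct A_k\vct x_k\in D^\polar$, compactness of $C\cap S^{m-1}$ yields a subsequential limit $\vct x_{k_j}\to\vct x_\infty\in C\cap S^{m-1}$, and then $\vct A\vct x_\infty=\lim\vct A_{k_j}\vct x_{k_j}\in D^\polar$ since $D^\polar$ is closed, so $\vct A\in\P(C,D)$; closedness of $\D(C,D)$ is symmetric under $(\vct A,C,D)\mapsto(-\vct A^T,D,C)$.

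For (ii), the case $C=D=\IR^m$ is immediate: $D^\polar=\{\vct 0\}$ reduces both (P) and (D) to $\vct A$ being singular. In the general case, I would show $\vct A\notin\P(C,D)\Rightarrow\vct A\in\D(C,D)$. By Proposition~\ref{prop:nres=0,sres=0} together with Lemma~\ref{lem:A^(-1)(D^*)}, $\vct A\notin\P(C,D)$ says that the closed convex cones $C$ and $(\vct A^T D)^\polar$ intersect only at the origin of $\IR^m$. Because $C$ has nonempty interior, the separation theorem for convex cones produces a nonzero $\vct v\in -C^\polar$ lying in the bipolar $\closure{\vct A^T D}$. The nonempty interior of $D$ then serves as a Slater-type constraint qualification ensuring $\vct A^T D$ is closed (or enables a limiting argument to bring $\vct v$ into $\vct A^T D$ proper), so $\vct v=\vct A^T\vct y$ for some $\vct y\in D\setminus\{\vct 0\}$, and $-\vct A^T\vct y=-\vct v\in C^\polar$ places $\vct A$ in $\D(C,D)$.

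For (iii), I would first identify $\Sigma(C,D)$ with the topological boundary $\partial\P(C,D)$ in $\IR^{n\times m}$: the inclusion $\Sigma\subseteq\partial\P$ uses that $\inter\P(C,D)$ coincides with strict primal feasibility, which produces strict sign inequalities on $\langle\vct{Ax},\vct y\rangle$ incompatible with dual feasibility; the reverse $\partial\P\subseteq\Sigma$ follows from (ii) combined with closedness of $\D$. To show $\partial\P$ has Gaussian measure zero, I would parametrize $\Sigma$ via primal-dual witness pairs $(\vct x,\vct y)\in(C\cap S^{m-1})\times(D\cap S^{n-1})$, exploit the complementary-slackness identity $\langle\vct{Ax},\vct y\rangle=0$ (which is a codimension-one linear condition on $\vct A$ for each fixed $(\vct x,\vct y)$), and run a dimension count to see that the resulting image in $\IR^{n\times m}$ has codimension at least one, hence Lebesgue (equivalently Gaussian) measure zero. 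An alternative route is to invoke the conic kinematic formula (cf.~Section~\ref{sec:intro-conic-integr-geom}), which yields an exact identity that together with~(ii) forces $\Prob\{\vct G\in\Sigma\}=0$. The main obstacles I foresee are the closure of $\vct A^T D$ in the separation step of~(ii), where the nonempty-interior hypothesis on $D$ is essential, and the measure-zero conclusion in~(iii): without convexity of $\P(C,D)$ one cannot simply invoke the principle that the boundary of a convex body has measure zero, and the dimension count must be executed with care depending on the rank and interior structure of $C$ and $D$.
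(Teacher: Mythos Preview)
Your arguments for~(1) and~(2) follow the paper's: closedness via continuity of $\vct A\mapsto\sres{A}{C}{D}$ (the paper phrases this as a preimage of $\{0\}$ rather than a sequential argument, but the content is the same), and the union via a Farkas/separation argument applied to $C$ and $(\vct A^TD)^\polar$. One correction in~(2): the nonempty interior of~$D$ does \emph{not} ensure that $\vct A^TD$ is closed (take $D$ a Lorentz cone and $\vct A^T$ a projection along a direction tangent to $\partial D$). The limiting argument you allude to works, but not by bringing $\vct v$ into $\vct A^TD$; rather, if $-\vct v=\lim_k\vct A^T\vct y_k$ with $\|\vct y_k\|\to\infty$, a subsequential limit of $\vct y_k/\|\vct y_k\|$ produces $\vct y\in (D\cap\ker\vct A^T)\setminus\{\vct0\}$, and then $-\vct A^T\vct y=\vct0\in C^\polar$ already witnesses $\vct A\in\D$.

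For~(3) your primary route has a genuine gap: the identification $\Sigma=\partial\P$ is false in general. Take $C=\IR^m$, $D\subsetneq\IR^n$ a closed half-space, and $n<m$. Every $\vct A\in\IR^{n\times m}$ has nontrivial kernel, so $\vct A\vct x=\vct0\in D^\polar$ for some $\vct x\neq\vct0$; hence $\P=\IR^{n\times m}$ and $\partial\P=\emptyset$, yet $\Sigma=\D$ is nonempty. The claim that $\inter\P$ equals the strictly primal feasible set fails here because $D^\polar$ is a ray with empty interior. Even granting a correct boundary description, the subsequent ``dimension count'' over witness pairs $(\vct x,\vct y)$ is not rigorous as stated: $\Sigma$ is not a priori a smooth or stratified set, and the single scalar relation $\langle\vct A\vct x,\vct y\rangle=0$ does not by itself bound the dimension of an image under a parametrisation whose domain you have not controlled.

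The paper's argument for~(3) avoids these issues. Assuming $n\leq m$ (by symmetry) and $\vct G$ full rank (almost surely), one uses Farkas again to rewrite $\vct G\in\Sigma$ as the event that either $D^\polar$ \emph{touches} $\vct GC$ (meets it nontrivially but misses its interior) or $\ker\vct G$ touches~$C$. Since $\vct GC$ and $\ker\vct G$ are uniformly randomly oriented when $\vct G$ is Gaussian, one invokes the standard fact from stochastic geometry (Lemma~\ref{lem:touch}) that a fixed cone almost surely does not touch a uniformly randomly rotated cone. This is precisely the kinematic idea you list as an alternative; make it your main argument rather than the boundary/dimension-count approach.
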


Note that from~(2) and the characterizations~\eqref{eq:P(C,D)} and~\eqref{eq:D(C,D)} of $\P(C,D)$ and $\D(C,D)$, respectively, we obtain for every $\vct A\in\IR^{n\times m}$: $\min\{ \sres{A}{C}{D}, \srestm{A}{D}{C}\} = 0$ or, equivalently,
\begin{equation}\label{eq:min(sres(A,C,D),srestm(A,D,C))=0}
  \max\big\{ \sres{A}{C}{D}, \srestm{A}{D}{C}\big\} = \sres{A}{C}{D} + \srestm{A}{D}{C} ,
\end{equation}
unless $C=D=\IR^m$.

In the following we simplify the notation by writing $\P,\D$ instead of $\P(C,D),\D(C,D)$. For the announced interpretation of the restricted singular value as distance to $\P,\D$ we introduce the following notation: for $\vct A\in\IR^{n\times m}$ define
\begin{align*}
   \dist(\vct A,\P) & := \min\{\|\vct\Delta\|\mid \vct A+\vct\Delta\in\P\} , & \dist(\vct A,\D) & := \min\{\|\vct\Delta\|\mid \vct A+\vct\Delta\in\D\},
\end{align*}
where as usual, the norm considered is the operator norm. The proof of the following proposition, given in Appendix~\ref{sec:appendix-convex}, follows along the lines
of similar derivations in the case with a cone and a linear subspace~\cite{BF:09}.

\begin{proposition}\label{prop:sing-dist}
Let $C\subseteq\IR^m$, $D\subseteq\IR^n$ nonzero closed convex cones with nonempty interior. 
Then
\begin{align*}
  \dist(\vct A,\P) & =\sres{A}{C}{D} , & \dist(\vct A,\D) & =\srestm{A}{D}{C} .
\end{align*}
\end{proposition}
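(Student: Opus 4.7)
The plan is to prove the first identity $\dist(\vct A,\P)=\sres{A}{C}{D}$; the second then follows by the symmetry $\vct A\leftrightarrow -\vct A^T$, $C\leftrightarrow D$, which exchanges $\P(C,D)$ with $\D(D,C)$ and preserves the operator norm. Both inequalities are needed: an upper bound via an explicit rank-one perturbation, and a lower bound from the $1$-Lipschitz property of $\Proj_D$.

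For the upper bound, compactness of $C\cap S^{m-1}$ and continuity of $\vct x\mapsto\|\Proj_D(\vct{Ax})\|$ provide a minimizer $\vct x_\star\in C\cap S^{m-1}$ with $\|\Proj_D(\vct A\vct x_\star)\|=\sres{A}{C}{D}$. Setting $\vct z_\star:=\Proj_D(\vct A\vct x_\star)$ and $\vct\Delta:=-\vct z_\star\vct x_\star^T$, the Moreau decomposition for the convex cone $D$ yields $\vct A\vct x_\star-\vct z_\star=\Proj_{D^\polar}(\vct A\vct x_\star)\in D^\polar$, so $(\vct A+\vct\Delta)\vct x_\star\in D^\polar$ with $\vct x_\star\in C\setminus\{\vct0\}$. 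Hence $\vct A+\vct\Delta\in\P$, and $\|\vct\Delta\|=\|\vct z_\star\|\|\vct x_\star\|=\sres{A}{C}{D}$, which gives $\dist(\vct A,\P)\leq\sres{A}{C}{D}$.

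For the lower bound, the nonexpansiveness of the projection onto the closed convex set $D$ yields, for every $\vct x\in C\cap S^{m-1}$ and every $\vct\Delta\in\IR^{n\times m}$,
\begin{equation*}
\bigl|\|\Proj_D((\vct A+\vct\Delta)\vct x)\|-\|\Proj_D(\vct A\vct x)\|\bigr|\leq\|\vct\Delta\vct x\|\leq\|\vct\Delta\|.
\end{equation*}
Minimizing over $\vct x$ gives a Lipschitz-type bound $\sres{A+\vct\Delta}{C}{D}\geq\sres{A}{C}{D}-\|\vct\Delta\|$. Any $\vct\Delta$ with $\vct A+\vct\Delta\in\P$ has $\sres{A+\vct\Delta}{C}{D}=0$ by~\eqref{eq:P(C,D)}, forcing $\|\vct\Delta\|\geq\sres{A}{C}{D}$. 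Closedness of $\P$ from Proposition~\ref{prop:primaldual}(1) ensures the infimum defining $\dist(\vct A,\P)$ is attained, completing the identity.

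The main (though modest) obstacle is the upper bound: one must exhibit a perturbation that both lands in $\P$ and realizes the norm $\sres{A}{C}{D}$ exactly. The enabler is the Moreau decomposition, which guarantees that the residual $\vct A\vct x_\star-\Proj_D(\vct A\vct x_\star)$ lies in $D^\polar$; this is the cone analogue of the orthogonal complement identity underlying the subspace case in~\cite{BF:09}. Without it, one would be forced into a more delicate separating hyperplane argument; with it, the rank-one construction is essentially canonical.
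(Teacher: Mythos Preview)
Your proof is correct and follows the same two-inequality skeleton as the paper's (symmetry reduction, rank-one perturbation for the upper bound, contraction-type argument for the lower bound), but with genuinely different choices in both halves.

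For the upper bound, the paper perturbs by $\vct\Delta_{\text{paper}}=-\vct y_0\vct y_0^T\vct A$ with $\vct y_0=\Proj_D(\vct A\vct x_0)/\|\Proj_D(\vct A\vct x_0)\|$, and then has to argue that $\|\vct\Delta_{\text{paper}}\|=\|\vct A^T\vct y_0\|$ is bounded by $\sres{A}{C}{D}$. Your perturbation $\vct\Delta=-\Proj_D(\vct A\vct x_\star)\,\vct x_\star^T$ has operator norm \emph{equal} to $\sres{A}{C}{D}$ by construction, so no further inequality is needed; this is cleaner and sidesteps the question of why the paper's bound on $\|\vct A^T\vct y_0\|$ holds. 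Both perturbations send $\vct x_\star$ to the same point $\Proj_{D^\polar}(\vct A\vct x_\star)$ via Moreau.

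For the lower bound, the paper picks a witness $\vct x_0$ for primal feasibility of $\vct A+\vct\Delta$ and runs an explicit chain $\|\vct\Delta\|\geq\|\vct\Delta\vct x_0\|\geq\|\Proj_D(-\vct\Delta\vct x_0)\|\geq\langle-\vct\Delta\vct x_0,\vct y_0\rangle\geq\langle\vct A\vct x_0,\vct y_0\rangle=\|\Proj_D(\vct A\vct x_0)\|\geq\sres{A}{C}{D}$. You instead package the same content as the single statement that $\vct A\mapsto\sres{A}{C}{D}$ is $1$-Lipschitz in operator norm, via nonexpansiveness of $\Proj_D$. This is the same mechanism but more reusable: it gives the Lipschitz property globally, not just at the minimizer, and makes the lower bound a one-liner once $\sres{A+\vct\Delta}{C}{D}=0$ is invoked from~\eqref{eq:P(C,D)}.
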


We finish this section by considering the intersection of~$\P$ and~$\D$, which we denote by
  \[ \Sigma(C,D) := \P(C,D)\cap \D(C,D) , \]
or simply $\Sigma$ when the cones are clear from context.
This set is usually referred to as the set of \emph{ill-posed inputs}.
As shown in Proposition~\ref{prop:primaldual}, the set of ill-posed inputs, assuming~$C\subseteq\IR^m$ and~$D\subseteq\IR^n$ each have nonempty interior, is a nonempty zero volume set. In the special case $C=\IR^m$, $D=\IR^n$,
  \[ \Sigma(\IR^m,\IR^n) = \{\text{rank deficient matrices in } \IR^{n\times m}\} . \]
From~\eqref{eq:min(sres(A,C,D),srestm(A,D,C))=0} and Proposition~\ref{prop:sing-dist} we obtain, if $(C,D)\neq(\IR^m,\IR^m)$,
  \[ \dist(\vct A,\Sigma) = \max\big\{\dist(\vct A,\P) , \dist(\vct A,\D)\big\} = \dist(\vct A,\P) + \dist(\vct A,\D) . \]

The inverse distance to ill-posedness forms the heart of Renegar's condition number~\cite{rene:94,rene:95b}. We denote
\begin{equation}\label{eq:renegars}
  \RCD{A}{C}{D} := \frac{\|\vct A\|}{\dist(\vct A,\Sigma(C,D))} = \min\Bigg\{ \frac{\|\vct A\|}{\sres{A}{C}{D}} , \frac{\|\vct A\|}{\srestm{A}{D}{C}} \Bigg\} . 
\end{equation}
Furthermore, we abbreviate the special case $D=\IR^n$, which corresponds to the classical feasibility problem, cf.~Section~\ref{sec:intro-conv-feas}, by the notation
\begin{equation}\label{eq:renegars-single_cone}
  \Ren_C(\vct A) := \RCD{A}{C}{\IR^n} .
\end{equation}
Note that the usual matrix condition number is recovered in the case $C=\IR^m$, $D=\IR^n$,
  \[ \Ren_{\IR^m}(\vct A) = \Ren_{\IR^m,\IR^n}(\vct A) = \kappa(\vct A) . \]
Another simple but useful property is the symmetry $\Ren_{C,D}(\vct A) = \Ren_{D,C}(-\vct A^T)$. Finally, note that the restricted singular value has the following monotonicity properties
\begin{align*}
   C\subseteq C' & \Rightarrow \sres{A}{C}{D}\geq \sres{A}{C'}{D} , & D\subseteq D' & \Rightarrow \sres{A}{C}{D}\leq \sres{A}{C}{D'} .
\end{align*}
This indicates that not necessarily $\Ren_C(\vct A)\leq\Ren_{C'}(\vct A)$ if $C\subseteq C'$. But in the case $C'=\IR^m$ and $n\geq m$ this inequality does hold, which we formulate in the following lemma.

\begin{lemma}
Let $C\subseteq\IR^m$ closed convex cone with nonempty interior and $\vct A\in\IR^{n\times m}$ with $n\geq m$. Then
\begin{equation}\label{eq:R_C(A)<=kappa(A)}
  \Ren_C(\vct A) \leq \kappa(\vct A) .
\end{equation}
\end{lemma}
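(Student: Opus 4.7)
The plan is to unwind both condition numbers and show that one of the two terms in the minimum defining $\Ren_C(\vct A)$ is already bounded above by $\kappa(\vct A)$, which then forces the minimum to satisfy the same bound.

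First, I would observe that because $D = \IR^n$ here, the projection $\Proj_{\IR^n}$ is the identity, so by the definition of the restricted singular value
\begin{equation*}
  \sres{A}{C}{\IR^n} = \min_{\vct x\in C\cap S^{m-1}} \|\vct{Ax}\|.
\end{equation*}
Since $C \cap S^{m-1} \subseteq S^{m-1}$, taking the minimum over a smaller set can only make it larger, so
\begin{equation*}
  \sres{A}{C}{\IR^n} \;\geq\; \min_{\vct x\in S^{m-1}} \|\vct{Ax}\| \;=\; \sigma(\vct A).
\end{equation*}

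Next, I would verify that under $n \geq m$ the Euclidean matrix condition number reduces to $\kappa(\vct A) = \|\vct A\|/\sigma(\vct A)$. If $n > m$, then $\vct A^T$ has a nontrivial kernel, so $\sigma(\vct A^T) = 0$ and the term $\|\vct A\|/\sigma(\vct A^T)$ in the definition of $\kappa$ is infinite; if $n = m$, then $\sigma(\vct A) = \sigma(\vct A^T)$ by standard properties of singular values. Either way $\kappa(\vct A) = \|\vct A\|/\sigma(\vct A)$ (interpreted as $+\infty$ when $\sigma(\vct A) = 0$, in which case the claim is trivial).

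Combining these observations,
\begin{equation*}
  \Ren_C(\vct A) \;=\; \min\Bigg\{ \frac{\|\vct A\|}{\sres{A}{C}{\IR^n}},\, \frac{\|\vct A\|}{\srestm{A}{\IR^n}{C}} \Bigg\} \;\leq\; \frac{\|\vct A\|}{\sres{A}{C}{\IR^n}} \;\leq\; \frac{\|\vct A\|}{\sigma(\vct A)} \;=\; \kappa(\vct A),
\end{equation*}
which is the desired inequality. There is essentially no obstacle here; the only mild subtlety is the bookkeeping to confirm that when $n \geq m$ the second term of the min in $\kappa(\vct A)$ can be dropped. Note that the reverse route via $\srestm{A}{\IR^n}{C}$ would go the wrong way, since projecting onto $C$ only decreases norms, giving $\srestm{A}{\IR^n}{C} \leq \sigma(\vct A^T)$ — so the primal-side bound is the one to use.
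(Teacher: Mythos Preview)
Your proof is correct and follows essentially the same route as the paper: both use the monotonicity $\sres{A}{C}{\IR^n}\geq \sres{A}{\IR^m}{\IR^n}=\sigma(\vct A)$ together with $\kappa(\vct A)=\|\vct A\|/\sigma(\vct A)$ when $n\geq m$. The only difference is cosmetic: the paper splits off the case $C=\IR^m$ and, for $C\neq\IR^m$, shows $\srestm{A}{\IR^n}{C}=0$ so that $\Ren_C(\vct A)$ \emph{equals} $\|\vct A\|/\sres{A}{C}{\IR^n}$, whereas you simply bound the minimum by its first term --- a slight streamlining that loses nothing.
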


\begin{proof}
In the case $C=\IR^m$ we have $\Ren_{\IR^m}(\vct A) = \kappa(\vct A)$. If $C\neq\IR^m$ then $\vct AC\neq\IR^n$, as $n\geq m$. It follows that $\IR^n\cap (-\vct AC)^\polar\neq\{\vct0\}$, and thus $\srestm{A}{\IR^n}{C}=0$, cf.~\eqref{eq:D(C,D)}. Hence,
  \[ \Ren_C(\vct A)=\frac{\|\vct A\|}{\sres{A}{C}{\IR^n}}\leq \frac{\|\vct A\|}{\sres{A}{\IR^m}{\IR^n}} = \kappa(\vct A) . \qedhere \]
\end{proof}

The interesting case for convex optimizations is in fact where $C$ is some self-dual cone like the nonnegative orthant or the cone of nonnegative definite matrices, and $n<m$. For these cases Renegar's condition number has found applications in the complexity analysis of convex optimization, see~\cite{Condition} for a discussion and references. For example,~\cite{VRP:07} provides an analysis of the running time of an interior-point algorithm for the convex feasibility problem in terms of this condition number. In Section~\ref{sec:lin_imag} we use $\Ren_C(\vct A)$ for upper bounds of some important moment functionals. 
Section~\ref{sec:comp-bounds} contains further details on the mentioned complexity analysis in convex optimization.

\subsection{Convex geometric interpretation}\label{sec:conv-geom-restr-norm-sval}
The restricted norm and the restricted singular value can be interpreted in terms of the support function of convex bodies. 
Recall that a set $K\subset\IR^m$ is a convex body if $K$ is nonempty, compact, and convex. The support function of a convex body $K\subset\IR^m$ is given by
  \[ h_K\colon\IR^m\to\IR ,\qquad h_K(\vct x) := \max_{\vct z\in K}\langle \vct x,\vct z\rangle . \]
If $C\subseteq\IR^m$ is a closed convex cone, and if $K=C\cap B^m$, where $\ball{m} := \big\{ \vct{x} \in \R^m : \enorm{\vct{x}} \leq 1 \big\}$, denotes the corresponding cone stub, then one readily verifies that
\begin{equation}\label{eq:|Pi_C(x)|=h_K(x)}
  \|\Pi_C(\vct x)\| = h_K(\vct x) ,\qquad \text{for all } \vct x\in\IR^m .
\end{equation}
If $K=\conv(C\cap S^{m-1})$ then one still has $\|\Pi_C(\vct x)\| = h_K(\vct x)$ for all $\vct x\not\in \inter(C^\polar)$, but in general one only gets an inequality:
\begin{equation}\label{eq:|Pi_C(x)|>=h_K(x)}
  \|\Pi_C(\vct x)\| \geq h_K(\vct x) ,\qquad \text{for all } \vct x\in\IR^m .
\end{equation}

\begin{remark}\label{rem:normprojC-h_K}
Interpreting $\vct{x}\in\IR^m$ as the linear map $\R\to \R^m$, $\lambda\mapsto \lambda\vct x$, we have the characterization $\norm{\Proj_C(\vct{x})} = \sigma_{\R_+\to C}(\vct{x})$.
Moreover, we have
  \[ \srestm{x}{C}{\IR_+} = \min_{\vct z\in C\cap S^{m-1}} \max\{-\langle\vct x,\vct z\rangle,0\} = \begin{cases} 0 & \text{if } \vct x\not\in C^\polar \\ -\max_{\vct z\in C\cap S^{m-1}} \langle\vct x,\vct z\rangle & \text{if } \vct x\in C^\polar . \end{cases} \]
Using this, we can write the support function of $K=\conv(C\cap S^{m-1})$ in the form
\begin{equation}\label{eq:h_kproj-error}
 h_K(\vct{x}) = \sres{x}{\R_+}{C}-\sigma_{C\to \R_+}(-\transp{\vct{x}}) .
\end{equation}
In particular, one can interpret the summands in~\eqref{eq:h_kproj-error} as positive and negative parts of $h_K(\vct{x})$, and write
\begin{equation*}
  \dist(\vct{x},\Sigma(\IR_+,C)) = \sres{x}{\R_+}{C}+\sigma_{C\to \R_+}(-\transp{\vct{x}}) = |h_K(\vct{x})|.
\end{equation*}
\end{remark}

The restricted norm is related to the following construction: for convex bodies $K\subset\IR^m$, $K'\subset\IR^n$ define the \emph{(convex) tensor product}
\begin{equation*}\label{ex:kron-prod} 
   K\prtensor K' := \conv \{ \vct x\otimes\vct y\mid \vct x\in K,\vct y\in K'\} \subset\IR^{mn} ,
\end{equation*}
where $\otimes$ denotes the Kronecker product $\vct x\otimes \vct y=(x_1y_1,\ldots,x_1y_n,x_2y_2,\ldots, x_my_n)$, which is a concrete model for the classical tensor product. An application of Carath\'eodory's theorem~\cite[(2.4)]{Barv} shows that $K\prtensor K'$ is again a convex body. If $\vec\colon\IR^{n\times m}\to\IR^{nm}$ denotes the function that concatenates the columns of the matrices, then
\begin{equation}\label{eq:<Ax,y>=<vec(A),xy>}
  \langle \vec(\vct A), \vct x\otimes\vct y\rangle = \langle \vct{Ax},\vct y\rangle .
\end{equation}
Hence, if $C\subseteq\IR^m$ and $D\subseteq\IR^n$ are closed convex cones, and if $K=C\cap B^m$ and 
$K' = D\cap B^n$ denote the corresponding cone stubs, then
\begin{align}\label{eq:nres(A,C,D)-h_(KxK')}
   \nres{A}{C}{D} & = \max_{\vct x\in C\cap S^{m-1}} \|\Pi_D(\vct{Ax})\| = \max_{\vct x\in K} \|\Pi_D(\vct{Ax})\| \stackrel{\eqref{eq:|Pi_C(x)|=h_K(x)}}{=} \max_{\vct x\in K,\vct y\in K'} \langle \vct{Ax},\vct y\rangle \stackrel{\eqref{eq:<Ax,y>=<vec(A),xy>}}{=} \max_{\vct x\in K,\vct y\in K'} \langle \vec(\vct A), \vct x\otimes\vct y\rangle
\nonumber
\\ & = h_{K\prtensor K'}(\vec(\vct A)) .
\end{align}
Note that instead of the cone stub $C\cap B^m$ we could have taken $K = \conv(C\cap S^{m-1})$.

\begin{example}
Choosing $C=\IR^m$, $D=\IR^n$, shows that the operator norm is given by the support function of $B^m\prtensor B^n$. 
As the dual of the operator norm is given by the Schatten-1 matrix norm $\|.\|_*$, which returns the sum of the singular values of a matrix, we obtain
\begin{equation}\label{eq:Kronprod-unit-balls}
  B^m\prtensor B^n = \{\vec(\vct A)\mid \vct A\in\IR^{m\times n} , \|\vct A\|_*\leq 1\}.
\end{equation}
In particular, the convex tensor product of two cone stubs is not necessarily a cone stub.
\end{example}

The restricted singular value has a similar description as the restricted norm in~\eqref{eq:nres(A,C,D)-h_(KxK')}. For this we need to introduce the concept of a {\em convex bundle}. In the following let the set of convex bodies in~$\IR^N$ be denoted by $\K(\IR^N) := \{K\subset\IR^N\mid \text{convex body}\}$.

\begin{definition}
A \emph{convex bundle} with compact base set $M\subset\IR^m$ is a set-valued map $F\colon M\to \K(\IR^N)$ such that the graph
  \[ \gr(F) = \bigcup_{\vct x\in M} \{\vct x\}\times F(\vct x) \subset \IR^m\times\IR^N \]
is compact as well. The \emph{support function} of $F$ is defined by
\begin{equation*}
  h_F\colon\IR^N\to\IR ,\qquad h_F(\vct v) := \min_{\vct x\in M} h_{F(\vct x)}(\vct v) = \min_{\vct x\in M} \, \max_{\vct z\in F(\vct x)} \langle\vct v,\vct z\rangle .
\end{equation*}
\end{definition}

The restricted singular value is related to the following construction: for compact sets $M\subset\IR^m$, $M'\subset\IR^n$ define the \emph{(convex) tensor bundle}
\begin{equation*}
  F\colon M\to \K(\IR^{mn}) ,\qquad \vct x \mapsto \{\vct x\} \otimes K',
\end{equation*}
where $K':=\conv(M')$ denotes the convex hull of~$M'$. Using~\eqref{eq:<Ax,y>=<vec(A),xy>}, we can write the support function in the form
  \[ h_F(\vec(\vct A)) = \min_{\vct x\in M}\, \max_{\vct y\in M'} \langle \vct{Ax},\vct y \rangle , \]
where $\vct A\in\IR^{n\times m}$. We denote this bundle by $M\to M\prtensor M'$.
Setting $M:=C\cap S^{m-1}$ and $M':=D\cap B^n$, we obtain for the bundle $F=M\to M\prtensor M'$, cp.~\eqref{eq:nres(A,C,D)-h_(KxK')},
\begin{equation}\label{eq:sres(A,C,D)-h_(KK')}
  \sres{A}{C}{D} = h_F(\vec(\vct A)) .
\end{equation}

\section{Moment functionals of convex bodies}\label{sec:mom-func}

In this section we consider the moments of the random variable $h_K(\vct g)$, where~$K$ is a convex body and~$\vct g$ is a standard Gaussian vector of appropriate dimension. As shown in Section~\ref{sec:conv-geom-restr-norm-sval} this includes as a special case the moments of the restricted norm $\nres{G}{C}{D}$, where~$\vct G$ is a Gaussian matrix. In Section~\ref{sec:intro-mom-func} we introduce the concept of moment functionals, whose relation to Euclidean and conic intrinsic volumes will be discussed in Section~\ref{sec:rel-intr-vols}. 
In Section~\ref{sec:contr-inequ} we present Slepian's Lemma and an extension to higher moments
as monotonicity properties of moment functionals. Section~\ref{subse:moments} and Section~\ref{sec:lin_imag} describe applications of the extended Slepian's Lemma.

\subsection{Introduction of moment functionals}\label{sec:intro-mom-func}

Recall that $\K(\IR^m)$ denotes the set of convex bodies in~$\IR^m$; additionally, we define $\K:=\bigcup_m \K(\IR^m)$. In the following let $\vct g$ denote a standard Gaussian vector of appropriate dimension.

\begin{definition}\label{def:mom-func}
Let $f\colon\IR\to\IR$ Borel measurable. The \emph{$f$-moment functional} is defined by
\begin{equation}\label{eq:def-mu_f(K)}
  \mu_f\colon\K\to \IR ,\qquad \mu_f(K) := \Expect\big[f(h_K(\vct g))\big] .
\end{equation}
\end{definition}

An important special case of a moment functional is the \emph{Gaussian width} obtained by choosing $f=\id$. We denote this special functional by
\begin{equation}\label{eq:def-w(K)}
  w(K) := \mu_{\id}(K) = \Expect\big[h_K(\vct g)\big] .
\end{equation}
Another special case is the constant function $f\equiv1$, which is in fact an emergence of the \emph{Euler characteristic} $\mu_1(K)=\chi(K)=1$.

The following proposition lists some general properties of the moment functionals.

\begin{proposition}\label{prop:props-mu_f}
Let $f\colon\IR\to\IR$ Borel measurable.
\begin{enumerate}
  \setlength{\itemsep}{3pt}
  \item $\mu_f$ is intrinsic: $\mu_f(K)=\mu_f(K\times\{\vct0\})$.
  \item $\mu_f$ is continuous: $\mu_f(K_i)\to\mu_f(K)$ if $K_i\to K$.
  \item $\mu_f$ is orthogonal invariant: $\mu_f(\vct QK)=\mu_f(K)$ if $K\in\K(\IR^m)$, $\vct Q\in O(m)$.
  \item $\mu_f$ is additive: $\mu_f(K\cup K')+\mu_f(K\cap K')=\mu_f(K)+\mu_f(K')$, if $K,K',K\cup K'\in\K$.
  \item If $f$ is monotonically increasing, $f(x)\geq f(y)$ for all $x\geq y$, then so is $\mu_f$: $\mu_f(K)\geq \mu_f(K')$ for all $K\supseteq K'$.
  \item If $f(x)=x^r$, then $\mu_f$ is not translation invariant unless $r\in\{0,1\}$.
\end{enumerate}
\end{proposition}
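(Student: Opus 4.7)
The plan is to reduce each item to a pointwise statement about support functions and then push it through the Gaussian expectation.

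For (1), observe that $h_{K\times\{\vct 0\}}(\vct x,\vct y)=h_K(\vct x)$. Splitting a standard Gaussian on $\IR^{m+k}$ as $(\vct g,\vct g')$ with independent parts, the $\vct g'$-coordinate drops out of the integrand, so $\mu_f(K\times\{\vct 0\})=\Expect[f(h_K(\vct g))]=\mu_f(K)$. For (3), use $h_{\vct QK}(\vct x)=h_K(\vct Q^T\vct x)$ together with the rotational invariance of the Gaussian distribution. For (5), $K\supseteq K'$ gives $h_K\ge h_{K'}$ pointwise, whence monotonicity of $f$ forces $f(h_K(\vct g))\ge f(h_{K'}(\vct g))$ almost surely, and the expectations are ordered.

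For (2), Hausdorff convergence $K_i\to K$ forces $h_{K_i}\to h_K$ uniformly on the sphere, hence on any bounded set. With a uniform bound $K_i\subset R\,B^m$ one has $|h_{K_i}(\vct g)|\le R\|\vct g\|$, so dominated convergence delivers $\mu_f(K_i)\to\mu_f(K)$ under the mild auxiliary hypothesis that $f$ is continuous at a.e.\ value of $h_K(\vct g)$ and that $f(R\|\vct g\|)$ is integrable; this additional condition on $f$ is implicit in the claim and should be flagged.

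For (4), combine the pointwise identities
\begin{equation*}
h_{K\cup K'}(\vct x)=\max\{h_K(\vct x),h_{K'}(\vct x)\},\qquad h_{K\cap K'}(\vct x)=\min\{h_K(\vct x),h_{K'}(\vct x)\},
\end{equation*}
both valid under the hypothesis that $K\cup K'$ is convex (the first is immediate from the definition; the second is the standard valuation property of support functions, relying on convexity of $K\cup K'$ to ensure the supporting hyperplane witnessing the smaller of the two maxima actually meets $K\cap K'$), with the trivial identity $f(a)+f(b)=f(\max\{a,b\})+f(\min\{a,b\})$. Taking expectations yields the valuation equation.

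For (6), I would use $h_{K+\vct a}(\vct x)=h_K(\vct x)+\langle\vct a,\vct x\rangle$. The case $r=0$ is trivial ($\mu_f\equiv 1$) and the case $r=1$ follows from $\Expect[\langle\vct a,\vct g\rangle]=0$. For a positive integer $r\ge 2$, take the segment $K=[0,1]\vct e_1$ and translate by $s\vct e_2$; independence of $g_1,g_2$ and the vanishing of odd moments of $g_2$ collapse the binomial expansion to
\begin{equation*}
\mu_f(K+s\vct e_2)=\sum_{k\text{ even}}\binom{r}{k}\,s^k\,\Expect[(g_1)_+^{r-k}]\,\Expect[g_2^k],
\end{equation*}
whose $k=2$ coefficient $\binom{r}{2}\Expect[(g_1)_+^{r-2}]$ is strictly positive, ruling out constancy in $s$.

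The main obstacle is the $\min$-identity in (4): it is the only step requiring genuine convex-geometric input rather than bookkeeping, and the hypothesis $K\cup K'\in\K$ is essential there. Every other item reduces to swapping in the correct pointwise formula for the support function and commuting it with the Gaussian integral.
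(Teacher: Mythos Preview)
Your proposal is correct and follows the paper's proof almost verbatim for items (1), (3), (4), (5): both reduce each claim to the corresponding pointwise identity for support functions and push it through the expectation. In particular, for (4) you identify exactly the key step the paper uses, namely that convexity of $K\cup K'$ upgrades the general inequality $h_{K\cap K'}\leq\min\{h_K,h_{K'}\}$ to an equality.

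Two minor points of contrast. For (2), the paper simply cites continuity of the support function (Schneider, Lem.~1.8.10) without further comment; your observation that some integrability and a.e.\ continuity of $f$ is implicitly assumed for dominated convergence is a genuine refinement---the paper's hypothesis ``$f$ Borel measurable'' is indeed too weak for the statement as written. For (6), the paper just says ``this is easily verified in dimension $m=1$'' (translate an interval along its own axis and check that $(a+1)^r+(-a)^r$ is non-constant for $r\geq 2$), whereas you translate the segment $[0,1]\vct e_1$ in the orthogonal direction $\vct e_2$ and read off the $s^2$-coefficient from the binomial expansion. Both arguments work; yours avoids the case split on the parity of $r$ at the cost of working in $\IR^2$, while the paper's stays in $\IR$ but requires a short polynomial check.
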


\begin{proof}
(1) This follows from $h_{K\times\{\vct0\}}(\vct v,\vct v')=h_K(\vct v)$.

(2) This follows from the continuity of $h_K$, see for example~\cite[Lem.~1.8.10]{Schn:book}.

(3) This follows from the orthogonal invariance of the normal distribution.

(4) Note that $h_{K\cup K'}(\vct v)=\max\{h_K(\vct v),h_{K'}(\vct v)\}$ and $h_{K\cap K'}(\vct v)\leq\min\{h_K(\vct v),h_{K'}(\vct v)\}$. The convexity of $K\cup K'$ implies that $h_{K\cap K'}(\vct v)=\min\{h_K(\vct v),h_{K'}(\vct v)\}$. Hence, we obtain
\begin{align*}
   \mu_f(K\cup K') & +\mu_f(K\cap K')-\mu_f(K)-\mu_f(K') = \Expect\big[ f(h_{K\cup K'}(\vct g)) + f(h_{K\cap K'}(\vct g)) - f(h_K(\vct g)) - f(h_{K'}(\vct g)) \big]
\\ & = \Expect\big[ f\big(\max\{h_K(\vct g),h_{K'}(\vct g)\}\big) + f\big(\min\{h_K(\vct g),h_{K'}(\vct g)\}\big) - f(h_K(\vct g)) - f(h_{K'}(\vct g)) \big] = 0 .
\end{align*}

(5) This follows from the monotonicity of~$h_K$.

(6) This is easily verified in dimension $m=1$.
\end{proof}

A functional satisfying the additivity property~(4) is called a \emph{valuation}. Note that the translation invariance distinguishes the Gaussian width and the Euler characteristic from all other functionals. Somewhat surprisingly, restricted to cone stubs, i.e., convex bodies of the form $C\cap B^m$ for some cone $C\subseteq\IR^m$, \emph{every} moment functional can be written as a linear combination of the translation invariant (Euclidean) intrinsic volumes. We explain this in the following section.

\subsection{Relation to Euclidean and conic intrinsic volumes}\label{sec:rel-intr-vols}

The Euclidean intrinsic volumes of a convex body $K\in\K(\IR^m)$ appear in the formula, named after Jakob Steiner, for the volume of its tubular neighborhood $\T_m(K,r)=\{\vct z\in\IR^m\mid \|\vct x-\vct z\|\leq r \text{ for some } \vct x\in K\}$:
\begin{equation}\label{eq:Steiner-eucl}
  \vol_m \T_m(K,r) = \sum_{i=0}^m V_i(K)\, \vol_{m-i}(r B^{m-i}),
\end{equation}
where the volume of the Euclidean $k$-ball is given by $\vol_{k}(\ball{k}) = \frac{\pi^{k/2}}{\Gamma(1+k/2)}$.
The intrinsic volumes $V_0(K),\ldots,V_m(K)$ can be defined by formula~\eqref{eq:Steiner-eucl}. Special cases include the usual ($m$-dimensional) volume $V_m(K)=\vol_m(K)$, the Euler characteristic $V_0(K)=\chi(K)=1$, and the mean width~$V_1$, which is a multiple of the Gaussian width $w(K)=w(B^m)\,V_1(K)$. Hadwiger's characterization theorem, see for example~\cite{KR:97}, states that $V_0,\ldots,V_m$ form a basis for the Euclidean motion invariant continuous valuations on~$\K(\IR^m)$.

The spherical or conic intrinsic volumes can be characterized by a conic analog of the Steiner formula~\eqref{eq:Steiner-eucl}: if $C\subseteq\IR^m$ is a closed convex cone and $\vct g\in\IR^m$ Gaussian, then
\begin{equation}\label{eq:Steiner-conic}
  \Prob\{ \|\Pi_C(\vct g)\|\geq r \} = \sum_{i=0}^m v_i(C)\, \Prob\{ \chi_i\geq r \} ,
\end{equation}
where $\chi_0=0$ and $\chi_1,\ldots,\chi_m$ are independent chi-distributed random variables with $\chi_i$ having~$i$ degrees of freedom.
A powerful generalization of the Steiner formula~\eqref{eq:Steiner-conic} was derived in~\cite{MT:13}, which we state for completeness and later reference: if $f\colon \R_+^2\to \R$ is a Borel function and $C\subseteq\IR^m$ a closed convex cone, then
\begin{equation}\label{eq:steiner-mike}
  \Expect\big[f(\norm{\Proj_C(\vct{g})},\norm{\Proj_{C^{\polar}}(\vct{g})})\big] = \sum_{i=0}^m v_i(C) \Expect\big[f(\chi_i,\chi_{m-i}')\big],
\end{equation}
where $\chi_0=\chi_0'=0$ and $\chi_1,\ldots,\chi_m,\chi_1',\ldots,\chi_m'$ are independent chi-distributed random variables with $\chi_i$ and $\chi_i'$ having~$i$ degrees of freedom.

A close relation between the Euclidean and the conic intrinsic volumes is given by cone stubs, intersections of cones with the unit ball: if $C\subseteq\IR^m$ a closed convex cone and $K=C\cap B^m$, then for $0\leq j\leq m$,
\begin{equation}\label{eq:V_i(cone-stub)=...v_j(C)}
  V_i(K) = \sum_{j=i}^m V_i(B^j)\, v_j(C) = \sum_{j=i}^m \bigg(\begin{matrix}j\\i\end{matrix}\bigg) \frac{\vol_j B^j}{\vol_{j-i} B^{j-i}}\, v_j(C) ,
\end{equation}
cf.~\cite[Prop.~4.4.18]{am:thesis}.

As seen in Proposition~\ref{prop:props-mu_f}, the moment functionals are rarely translation invariant, and can therefore in general not be expressed in terms of the Euclidean intrinsic volumes, which are translation invariant. However, for $K=C\cap B^m$ with $C\subseteq\IR^m$ a closed convex cone, one can express \emph{every} $f$-moment functional in terms of the Euclidean intrinsic 
volumes $V_0(K),\ldots,V_m(K)$ as well as in the conic intrinsic volumes $v_0(C),\ldots,v_m(C)$.

\begin{proposition}\label{prop:expr-intr-vol}
Let $f\colon\IR\to\IR$ Borel measurable. For every $K=C\cap B^m$ with $C\subseteq\IR^m$ a closed convex cone, we have
\begin{equation}\label{eq:mu_f(cone-stub)=...v_j(C)}
  \mu_f(K) = \sum_{j=0}^m v_j(C) \mu_f(B^j) = \sum_{j=0}^m V_j(K) \sum_{i=0}^j a_{ij}\, \mu_f(B^i)  ,
\end{equation}
where the matrix $\vct A=(a_{ij})\in\IR^{(m+1)\times(m+1)}$ is given by
  \[ \vct A^{-1} = \Bigg( \bigg(\begin{matrix}j\\i\end{matrix}\bigg)\, \frac{\vol B^j}{\vol B^{j-i}}\Bigg)_{ij} . \]
\end{proposition}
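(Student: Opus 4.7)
The plan is to derive the first equality directly from the Master Steiner formula \eqref{eq:steiner-mike}, and then to obtain the second equality by inverting the linear relation \eqref{eq:V_i(cone-stub)=...v_j(C)} between conic and Euclidean intrinsic volumes of a cone stub.

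First, by \eqref{eq:|Pi_C(x)|=h_K(x)} we have $h_K(\vct g)=\|\Proj_C(\vct g)\|$, so
\begin{equation*}
  \mu_f(K)=\Expect\bigl[f(\|\Proj_C(\vct g)\|)\bigr].
\end{equation*}
Applying \eqref{eq:steiner-mike} to the Borel function $(s,t)\mapsto f(s)$ yields
\begin{equation*}
  \mu_f(K)=\sum_{j=0}^m v_j(C)\,\Expect[f(\chi_j)].
\end{equation*}
To identify the coefficients, note that $\mu_f$ is intrinsic by Proposition~\ref{prop:props-mu_f}(1), so $\mu_f(B^j)$ may be computed using a $j$-dimensional standard Gaussian $\vct g^{(j)}$; since $h_{B^j}(\vct v)=\|\vct v\|$, this gives $\mu_f(B^j)=\Expect[f(\|\vct g^{(j)}\|)]=\Expect[f(\chi_j)]$. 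This establishes the first equality.

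For the second equality, observe that \eqref{eq:V_i(cone-stub)=...v_j(C)} says precisely that
\begin{equation*}
  V_i(K)=\sum_{j=0}^m (\vct A^{-1})_{ij}\,v_j(C),
\end{equation*}
with $(\vct A^{-1})_{ij}=\binom{j}{i}\,\vol B^j/\vol B^{j-i}$ for $i\le j$ and $0$ otherwise. Thus $\vct A^{-1}$ is upper triangular with nonzero diagonal entries $(\vct A^{-1})_{ii}=\vol B^i>0$, hence invertible, and its inverse $\vct A=(a_{ij})$ is upper triangular as well. Inverting gives $v_j(C)=\sum_{i=0}^m a_{ji}\,V_i(K)=\sum_{i=j}^m a_{ji}\,V_i(K)$. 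Substituting into the first equality and swapping the order of summation yields
\begin{equation*}
  \mu_f(K)=\sum_{k=0}^m \mu_f(B^k)\sum_{j=0}^m a_{kj}\,V_j(K)=\sum_{j=0}^m V_j(K)\sum_{i=0}^j a_{ij}\,\mu_f(B^i),
\end{equation*}
where in the last step we used that $a_{ij}=0$ for $i>j$ and renamed the summation index. This is the second equality.

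The only step requiring any care is making sure the Master Steiner formula is being applied in the correct direction and that the normalization identifying $\Expect[f(\chi_j)]$ with $\mu_f(B^j)$ uses the intrinsic property; the rest is a routine change of basis between the $\{v_j(C)\}$ and $\{V_j(K)\}$ bases granted by \eqref{eq:V_i(cone-stub)=...v_j(C)}.
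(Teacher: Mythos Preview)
Your proof is correct and follows essentially the same approach as the paper: both use \eqref{eq:|Pi_C(x)|=h_K(x)} to rewrite $\mu_f(K)$ as $\Expect[f(\|\Proj_C(\vct g)\|)]$, apply the generalized Steiner formula \eqref{eq:steiner-mike} to obtain the first equality, and then invoke the linear relation \eqref{eq:V_i(cone-stub)=...v_j(C)} for the second. You spell out the matrix inversion a bit more explicitly than the paper (which simply says the second equality ``follows from'' \eqref{eq:V_i(cone-stub)=...v_j(C)}), but the argument is the same.
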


\begin{proof}
Using~\eqref{eq:|Pi_C(x)|=h_K(x)} we can write $\mu_f(K) = \Expect\big[ f(\|\Pi_C(\vct g)\|)\big]$.
The generalized Steiner formula~\eqref{eq:steiner-mike} implies
  \[ \mu_f(K) = \sum_{j=0}^m \Expect\big[ f(\|\Pi_{L_j}(\vct g)\|)\big]\,v_j(C) = \sum_{j=0}^m \mu_f(B^j)\,v_j(C) , \]
where $L_j\subseteq\IR^m$ denotes a $j$-dimensional linear subspace. The second equation in~\eqref{eq:mu_f(cone-stub)=...v_j(C)} follows from the relation~\eqref{eq:V_i(cone-stub)=...v_j(C)} between the conic intrinsic volumes of a cone and the Euclidean intrinsic volumes of the corresponding cone stub.
\end{proof}

In particular, we obtain from~\eqref{eq:nres(A,C,D)-h_(KxK')} for monotonically increasing~$f$,
  \[ \Expect\big[ f(\nres{G}{C}{D})\big] = \mu_f(K\prtensor K') \leq \mu_f\big((C\prtensor D)\cap B^{mn}\big)= \sum_{j=0}^{mn} v_j(C\prtensor D) \mu_f(B^j) , \]
where $C\subseteq\IR^m$ and $D\subseteq\IR^n$ closed convex cones, $K=C\cap B^m$, $K'=D\cap B^n$, and $\vct G\in\IR^{n\times m}$ a standard Gaussian matrix. 
Specializing further by setting $f=\id$, we obtain
  \[ \Expect[ \nres{G}{C}{D} ] \leq w((C\prtensor D)\cap B^{mn}) . \]
This estimate can be sharp, as seen from the case $C=\IR_+$, where we have $\Expect[ \nres{G}{C}{D} ] = w(D\cap B^n) = w((C\prtensor D)\cap B^{mn})$, or trivial, as seen from the case $C=\IR^m,D=\IR^n$, where we have $\IR^m\otimes\IR^n=\IR^{mn}$ and $\Expect[ \|\vct G\| ] \leq w(B^{mn}) = \Expect[ \|\vct G\|_F ]$, where $\|\cdot\|_F$ is the Frobenius norm. 
Furthermore, this upper bound is not very explicit. A more useful estimate is achieved by an extension of Slepian's Lemma, which we present next.

\subsection{Contraction inequalities}\label{sec:contr-inequ}

Recall from Proposition~\ref{prop:props-mu_f}(5) that for monotonically increasing $f\colon\IR\to\IR$, the functional~$\mu_f$ is monotonically increasing under inclusion. Slepian's Lemma generalizes this monotonicity by weakening the inclusion assumption.

\begin{definition}\label{def:contr-body}
For a convex body $K\in\K$ we say that $M\subseteq K$ generates~$K$ if $K=\clconv(M)$. For $K_1,K_2\in \K$ we say that $K_2$ is a \emph{contraction} of $K_1$ if there exists a $1$-Lipschitz surjection $\vp\colon M_1\to M_2$ between generating sets~$M_1,M_2$ of $K_1,K_2$. If additionally $\|\vp(\vct x)\|=\|\vct x\|$ for all $\vct x\in M_1$ then $K_2$ is a \emph{norm-preserved contraction} of $K_1$.

If $\vct0\in K_1\cap K_2$ we say that $K_2$ is a \emph{$\vct 0$-contraction} of $K_1$ if there exists a $1$-Lipschitz surjection $\vp\colon M_1\to M_2$ such that additionally $\|\vp(\vct x)\|\leq\|\vct x\|$ for all $\vct x\in M_1$.
\end{definition}

The following proposition is a convex geometric formulation of (the generalized) Slepian's Inequality.
The proof is deferred to Section~\ref{sec:mom-functls-bdls}, where a more general version will be proved in Theorem~\ref{thm:Gordon-geom}.

\begin{proposition}\label{prop:slep-ext}
Let $K_1,K_2\in\K$.
\begin{enumerate}
  \item If $K_2$ is a contraction of $K_1$, then $w(K_2)\leq w(K_1)$.
  \item If $K_2$ is a norm-preserved contraction of $K_1$ and $f\colon\IR\to\IR$ monotonically increasing, then $\mu_f(K_2)\leq \mu_f(K_1)$.
  \item If $\vct 0\in K_1\cap K_2$ and $K_2$ is a $\vct0$-contraction of~$K_1$ and $f\colon\IR_+\to\IR$ monotonically increasing and convex, then $\mu_f(K_2)\leq \mu_f(K_1)$.
\end{enumerate}
\end{proposition}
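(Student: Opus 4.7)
My plan is to cast all three parts as Gaussian comparison statements between two processes sharing the index set~$M_1$. By surjectivity of~$\vp$, one has $h_{K_1}(\vct g) = \sup_{\vct x \in M_1} X_{\vct x}$ and $h_{K_2}(\vct g') = \sup_{\vct x \in M_1} Y_{\vct x}$, where $X_{\vct x} := \langle \vct x, \vct g\rangle$ and $Y_{\vct x} := \langle \vp(\vct x), \vct g'\rangle$ are centered Gaussians indexed by the (without loss of generality, compact) set~$M_1$. The 1-Lipschitz hypothesis on $\vp$ translates into
\[
  \Expect\big[(X_{\vct x} - X_{\vct y})^2\big] = \|\vct x - \vct y\|^2 \geq \|\vp(\vct x) - \vp(\vct y)\|^2 = \Expect\big[(Y_{\vct x} - Y_{\vct y})^2\big] .
\]
Part~(1) is then an immediate application of the Sudakov-Fernique inequality, yielding $w(K_1) = \Expect[\sup X_{\vct x}] \geq \Expect[\sup Y_{\vct x}] = w(K_2)$.

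For part~(2), the norm-preserving assumption $\|\vp(\vct x)\| = \|\vct x\|$ additionally gives equal variances $\Expect[X_{\vct x}^2] = \Expect[Y_{\vct x}^2]$, so that the classical Slepian Lemma applies and provides the stronger conclusion that $\sup_{\vct x} X_{\vct x}$ stochastically dominates $\sup_{\vct x} Y_{\vct x}$. Integrating this tail inequality against $df$ yields $\mu_f(K_1) \geq \mu_f(K_2)$ for every monotonically increasing $f$.

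For part~(3), I would first invoke the integral representation of convex increasing functions on $\IR_+$: every such $f$ admits the form $f(t) = f(0) + a t + \int_0^\infty (t - s)_+\, d\nu(s)$ for some $a \geq 0$ and a positive measure $\nu$. Since $\vct 0 \in K_i$ forces $T_i := h_{K_i}(\vct g_i) \geq 0$, it suffices to prove $\Expect[T_1] \geq \Expect[T_2]$ (which is part~(1) applied to the $\vct0$-contraction, since $\vct0$-contractions are in particular contractions) together with $\Expect[(T_1 - s)_+] \geq \Expect[(T_2 - s)_+]$ for every $s \geq 0$. To establish the latter I would augment the index set to $M_1 \cup \{*\}$ and introduce the Gaussian processes $\tilde X_{\vct x} = X_{\vct x} - s$, $\tilde Y_{\vct x} = Y_{\vct x} - s$ for $\vct x \in M_1$ and $\tilde X_* = \tilde Y_* = 0$. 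These have matching (non-constant) mean functions and suprema equal to $(T_1 - s)_+$ and $(T_2 - s)_+$, respectively; the only new increment to verify is
\[
  \Expect\big[(\tilde X_{\vct x} - \tilde X_*)^2\big] = s^2 + \|\vct x\|^2 \geq s^2 + \|\vp(\vct x)\|^2 = \Expect\big[(\tilde Y_{\vct x} - \tilde Y_*)^2\big] ,
\]
which is precisely where the $\vct 0$-contraction hypothesis $\|\vp(\vct x)\| \leq \|\vct x\|$ enters.

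The hard part will be justifying Sudakov-Fernique in this augmented setting, where the processes are non-centered and carry a shared non-constant mean function. My plan is to appeal to the smart-path interpolation $F(\lambda) = \Expect\big[\sup_{\vct x}(m_{\vct x} + \sqrt{\lambda}\,\bar X_{\vct x} + \sqrt{1-\lambda}\,\bar Y_{\vct x})\big]$: because the common mean shift $m_{\vct x}$ enters both interpolating processes uniformly, the standard Gaussian integration-by-parts derivative computation still yields $F'(\lambda) \leq 0$ from the increment comparison alone. A safer alternative is to replace the deterministic $\tilde X_*, \tilde Y_*$ by $\eps \gamma$ for an independent standard Gaussian $\gamma$, apply Slepian strictly between centered processes, and let $\eps \downarrow 0$ by dominated convergence. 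This route recovers parts~(1)--(3) directly; the unified treatment promised by Section~\ref{sec:mom-functls-bdls} and Theorem~\ref{thm:Gordon-geom} presumably absorbs these augmentation tricks into a single geometric framework based on convex bundles.
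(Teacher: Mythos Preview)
Your argument is correct. For parts~(1) and~(2) it coincides with the paper's: both are the single-fiber case of Theorem~\ref{thm:Gordon-geom}, which reduces to Sudakov--Fernique and Slepian (Theorem~\ref{thm:gordon} with $m=1$) once the Lipschitz and norm-preservation hypotheses are rewritten as increment and variance conditions on the processes $X_{\vct x}=\langle\vct x,\vct g\rangle$ and $Y_{\vct x}=\langle\vp(\vct x),\vct g'\rangle$.

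For part~(3) you take a genuinely different route. The paper does not decompose~$f$; it proves a new comparison principle, Theorem~\ref{prop:gordon-with-0}, which (in the degenerate case $X_0=Y_0=0$, $m=1$) yields $\Expect\max_j f_+(X_j)\geq\Expect\max_j f_+(Y_j)$ directly for every convex increasing~$f$, and is established in Appendix~\ref{sec:proof-comp-thm} via Maurer's geometric smart-path reduction. You instead linearize in~$f$ through the representation $f(t)=f(0)+at+\int_0^\infty(t-s)_+\,d\nu(s)$, reduce to the hinge inequality $\Expect[(T_1-s)_+]\geq\Expect[(T_2-s)_+]$, and handle each hinge by adjoining a dummy index~$*$ with $\tilde X_*=\tilde Y_*=0$ and invoking Sudakov--Fernique with matching non-constant means. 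This is lighter machinery---only the equal-mean Sudakov--Fernique is needed, and your interpolation sketch correctly justifies it (the $\eps\gamma$ alternative is less clean, since replacing~$\tilde X_*$ alone does not recenter the variables indexed by~$M_1$). The trade-off is scope: Theorem~\ref{prop:gordon-with-0} is stated for $\min_i\max_j$ and drives the full bundle version Theorem~\ref{thm:Gordon-geom}, whereas your dummy-index trick, if attempted fiberwise in that setting, creates cross-fiber pairs $(i,*)$ versus $(k,j)$ on which the required Gordon increment inequality points the wrong way.
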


Statements~(1) and~(2) follow from known versions of Slepian's Inequality, while~(3) seems to be novel. We apply~(3) in the following two sections. In particular, we will see that the convexity assumption on~$f$ may not be dropped.

\subsection{Moments of the restricted norm of a Gaussian matrix}\label{subse:moments}

In this section we compare the moment functionals of the convex tensor product $K\prtensor K'$ with those of the direct product $K\times K'$. As a corollary we obtain the upper bounds~\eqref{eq:moments-restr-singvals-claim-norm-conv} and~\eqref{eq:moments-restr-singvals-claim-norm} in Theorem~\ref{thm:moments-restr-singvals}.

Recall from~\eqref{eq:nres(A,C,D)-h_(KxK')} that the norm restricted to cones $C\subseteq\IR^m$, $D\subseteq\IR^n$ can be expressed through the support function of the tensor product $K\prtensor K'$, where $K=C\cap B^m$ (or $K=\conv(C\cap S^{m-1})$) and $K'=D\cap B^n$, via $\nres{A}{C}{D} = h_{K\prtensor K'}(\vec(\vct A))$. As already seen in Section~\ref{sec:rel-intr-vols}, this implies that the moments of the restricted norm of a Gaussian matrix are given by the moment functional of the tensor product $K\prtensor K'$,
\begin{equation}\label{eq:E[f(||G||_(C,D))]=mu_f(KK')}
  \Expect\big[ f(\nres{G}{C}{D})\big] = \mu_f(K\prtensor K') ,
\end{equation}
where $\vct G\in\IR^{m\times n}$ is a (standard) Gaussian matrix.

To compare the tensor product with the direct product we consider the function $(\vct x,\vct y)\mapsto \vct x\otimes\vct y$.
This map is not necessarily a contraction:
\begin{align*}
  \|(\vct x_1,\vct y_1) - (\vct x_2,\vct y_2)\|^2 & = \|\vct x_1\|^2 + \|\vct y_1\|^2 + \|\vct x_2\|^2 + \|\vct y_2\|^2 - 2\big( \langle \vct x_1,\vct x_2\rangle + \langle \vct y_1,\vct y_2\rangle\big) ,
\\ \|\vct x_1\otimes\vct y_1 - \vct x_2\otimes\vct y_2\|^2 & = \|\vct x_1\|^2\,\|\vct y_1\|^2 + \|\vct x_2\|^2\,\|\vct y_2\|^2 - 2\langle \vct x_1,\vct x_2\rangle\langle \vct y_1,\vct y_2\rangle ;
\end{align*}
choosing $\vct x_2=\lambda\vct x_1$, $\vct y_2=\lambda\vct y_1$ with $\|\vct x_1\|=\|\vct y_1\|=1$ yields
  \[ \|(\vct x_1,\vct y_1) - (\vct x_2,\vct y_2)\|^2 - \|\vct x_1\otimes\vct y_2 - \vct x_2\otimes\vct y_2\|^2 = -(\lambda^2+2\lambda-1)(\lambda-1)^2 \;\stackrel{\big[\lambda=\frac{\sqrt{5}-1}{2}\big]}{=}\; \frac{11-5\sqrt{5}}{2} \approx -0.09 . \]

In the following proposition we provide sufficient assumptions on $K$ and $K'$, which imply that $K\prtensor K'$ is in fact a contraction of $K\times K'$.

\begin{proposition}\label{prop:tensprod-contract}
Let $K\subseteq B^m$ and $K'\subseteq B^n$ convex bodies, and let $M\subseteq S^{m-1}$ be closed.
\begin{enumerate}
  \item If $K=\conv(\{\vct0\}\cup M)$ and if $\vct0\in K'$, then $K\prtensor K'$ is a $\vct0$-contraction of $K\times K'$.
  \item If $K=\conv(M)$, then $(K\prtensor K')\times \{1\}$ is a norm-preserved contraction of $K\times K'$.  
\end{enumerate}
\end{proposition}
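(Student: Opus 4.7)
The plan is to construct the $1$-Lipschitz surjection explicitly as the restriction of the Kronecker product map $(\vct x,\vct y)\mapsto \vct x\otimes\vct y$ (augmented with the constant $1$ in part~(2)), and to verify the contraction property by a direct inner-product calculation that crucially exploits the norm constraints imposed by $M \subseteq S^{m-1}$.

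For part~(1), I would take $M_1 := (\{\vct 0\}\cup M)\times K' \subseteq K\times K'$ and $M_2 := \{\vct x\otimes \vct y \mid \vct x \in \{\vct 0\}\cup M,\ \vct y\in K'\} \subseteq K\prtensor K'$ as the generating sets. Since $K=\conv(\{\vct 0\}\cup M)$, writing $\vct x=\sum\lambda_i \vct x_i$ with $\vct x_i\in\{\vct 0\}\cup M$ shows $\clconv(M_1)=K\times K'$ and $\clconv(M_2)=K\prtensor K'$, so the map $\vp\colon M_1\to M_2$, $\vp(\vct x,\vct y):=\vct x\otimes\vct y$, is well-defined and surjective. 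The key computation is
\begin{equation*}
  \|(\vct x_1,\vct y_1)-(\vct x_2,\vct y_2)\|^2 - \|\vct x_1\otimes\vct y_1-\vct x_2\otimes\vct y_2\|^2 = \bigl(\|\vct x_1\|^2+\|\vct x_2\|^2\bigr) + \bigl(1-\|\vct x_1\|^2\bigr)\|\vct y_1\|^2 + \bigl(1-\|\vct x_2\|^2\bigr)\|\vct y_2\|^2 - 2\bigl\langle \vct x_1,\vct x_2\bigr\rangle - 2\bigl(1-\langle \vct x_1,\vct x_2\rangle\bigr)\langle \vct y_1,\vct y_2\rangle.
\end{equation*}
Splitting according to whether each $\vct x_i$ equals $\vct 0$ or lies in $M$ (so has norm $1$), this quantity reduces in the three resulting cases to $\|\vct y_1-\vct y_2\|^2$, to $1-2\langle \vct y_1,\vct y_2\rangle+\|\vct y_1\|^2$, and to $2(1-\langle \vct x_1,\vct x_2\rangle)(1-\langle \vct y_1,\vct y_2\rangle)$ respectively; each is non-negative using $\vct y_i\in B^n$ and Cauchy--Schwarz, yielding the $1$-Lipschitz property. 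Norm reduction is immediate: $\|\vct x\otimes\vct y\|=\|\vct x\|\|\vct y\|\leq\|\vct y\|\leq\|(\vct x,\vct y)\|$ because $\|\vct x\|\leq 1$ on the generating set.

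For part~(2), now every $\vct x\in M$ has $\|\vct x\|=1$, so the troublesome mixed cases disappear. I would take $M_1:=M\times K'$ and $M_2:=\{(\vct x\otimes\vct y,1)\mid \vct x\in M,\vct y\in K'\}$, and define $\vp(\vct x,\vct y):=(\vct x\otimes\vct y,1)$. The Lipschitz computation collapses to the identity
\begin{equation*}
  \|(\vct x_1,\vct y_1)-(\vct x_2,\vct y_2)\|^2 - \|(\vct x_1\otimes\vct y_1,1)-(\vct x_2\otimes\vct y_2,1)\|^2 = 2\bigl(1-\langle \vct x_1,\vct x_2\rangle\bigr)\bigl(1-\langle \vct y_1,\vct y_2\rangle\bigr) \geq 0,
\end{equation*}
and the norm-preserving identity $\|\vp(\vct x,\vct y)\|^2 = \|\vct x\|^2\|\vct y\|^2+1 = 1+\|\vct y\|^2 = \|(\vct x,\vct y)\|^2$ is forced by the choice of appending the constant~$1$, which was precisely the purpose of that extra coordinate.

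The only real obstacle is the contraction inequality itself, and the paragraph just before the proposition already flags the danger: the raw map $(\vct x,\vct y)\mapsto\vct x\otimes\vct y$ is \emph{not} $1$-Lipschitz in general (it fails for $\lambda=\tfrac{\sqrt 5-1}{2}$). The whole point of the hypothesis $M\subseteq S^{m-1}$ (in part~(1): plus $\vct 0$) is to restrict the domain to points where $\|\vct x\|\in\{0,1\}$, which is exactly what is needed for the non-negative-product factorization $2(1-\langle\vct x_1,\vct x_2\rangle)(1-\langle\vct y_1,\vct y_2\rangle)$ to survive; so the main work is a careful case analysis on the norms of the $\vct x_i$, after which everything falls out.
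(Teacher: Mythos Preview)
Your proof is correct and follows essentially the same approach as the paper: the same generating sets, the same map $(\vct x,\vct y)\mapsto\vct x\otimes\vct y$ (resp.\ $(\vct x\otimes\vct y,1)$), and the same key identity $2(1-\langle\vct x_1,\vct x_2\rangle)(1-\langle\vct y_1,\vct y_2\rangle)$ in the unit-norm case. Your case analysis in part~(1) is in fact slightly more explicit than the paper's, which absorbs the $\vct x_1=\vct x_2=\vct 0$ sub-case into the mixed case via $\|\vct x_2\|\in\{0,1\}$.
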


\begin{proof}
(1) The set $(\{\vct0\}\cup M)\times K'$ generates~$K\times K'$, and $(\{\vct0\}\cup M)\otimes K'$ generates~$K\prtensor K'$ by assumption. We define $\phi\colon (\{\vct0\}\cup M)\times K' \to (\{\vct0\}\cup M)\otimes K'$, $(\vct x,\vct y)\mapsto \vct x\otimes \vct y$. This map is surjective, and we need to show that it is also $1$-Lipschitz and satisfies $\|(\vct x,\vct y)\|\geq \|\vct x\otimes \vct y\|$. 
The second claim is obvious, since $\|\vct x\|\in \{0,1\}$. 
If $\|\vct x_1\|=\|\vct x_2\|=1$, then
\begin{equation}\label{eq:dist_dir/Kron-prod_|x|=1}
   \|(\vct x_1,\vct y_1) - (\vct x_2,\vct y_2)\|^2 - \|\vct x_1\otimes\vct y_1 - \vct x_2\otimes\vct y_2\|^2 = 2\,( 1 - \langle \vct x_1,\vct x_2\rangle)\,(1 - \langle \vct y_1,\vct y_2\rangle) \geq 0 .
\end{equation}
Furthermore, if $\vct x_1=\zerovct$, we have 
  \[ \|(\vct 0,\vct y_1) - (\vct x_2,\vct y_2)\|^2 - \|\vct 0\otimes\vct y_1 - \vct x_2\otimes\vct y_2\|^2 = \|\vct x_2\|^2+\|\vct y_1-\vct y_2\|^2-\|\vct x_2\|^2\|\vct y_2\|^2 \geq 0 , \]
the last inequality being a consequence of $\|\vct y_2\|\leq 1$ and $\|\vct x_2\|\in \{0,1\}$.

(2) This follows analogously.
\end{proof}

The following corollary consists of~\eqref{eq:moments-restr-singvals-claim-norm-conv} and~\eqref{eq:moments-restr-singvals-claim-norm} in Theorem~\ref{thm:moments-restr-singvals}. We recall these claims here for convenience.

\begin{corollary}
Let $C\subseteq\IR^m$, $D\subseteq\IR^n$ closed convex cones, and let $\vct G\in\IR^{n\times m}$ Gaussian matrix and $\vct g\in\IR^m$, $\vct g'\in\IR^n$ independent Gaussian vectors. Then for $f\colon\IR\to\IR$ monotonically increasing and convex,
\begin{equation}\label{eq:mom-nres-bd-1}
 \Expect\big[ f(\nres{G}{C}{D})\big] \leq \Expect\big[ f\big(\|\Proj_C(\vct g)\| + \|\Proj_D(\vct g')\|\big)\big] .
\end{equation}
If $\gamma\in\IR$ denotes a standard Gaussian variable, which is independent of~$\vct G$, then for every monotonically increasing~$f$,
\begin{equation}\label{eq:mom-nres-bd-2}
   \Expect\big[ f(\nres{G}{C}{D}+\gamma)\big] \leq \Expect\big[ f\big(\|\Proj_C(\vct g)\| + \|\Proj_D(\vct g')\|\big)\big] .
\end{equation}
\end{corollary}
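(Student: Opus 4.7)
The plan is to interpret both sides of each inequality as moment functionals of appropriate convex bodies and then apply the monotonicity statements of Proposition~\ref{prop:slep-ext} after verifying the correct contraction via Proposition~\ref{prop:tensprod-contract}. With $K\subseteq\IR^m$ and $K'\subseteq\IR^n$ built from $C$ and $D$, identity~\eqref{eq:nres(A,C,D)-h_(KxK')} gives $\nres{G}{C}{D}=h_{K\prtensor K'}(\vec(\vct G))$, while~\eqref{eq:|Pi_C(x)|=h_K(x)} together with additivity of the support function across a direct product gives $h_{K\times K'}(\vct g,\vct g')=\|\Proj_C(\vct g)\|+\|\Proj_D(\vct g')\|$ whenever $K,K'$ are cone stubs. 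The problem thus reduces to comparing $\mu_f$ on the tensor product with $\mu_f$ on the direct product.

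For~\eqref{eq:mom-nres-bd-1} I would set $K=C\cap B^m=\conv(\{\vct 0\}\cup(C\cap S^{m-1}))$ and $K'=D\cap B^n\ni\vct 0$. Both support functions are then nonnegative, so it suffices to treat $f$ as defined on $\IR_+$. Proposition~\ref{prop:tensprod-contract}(1) yields that $K\prtensor K'$ is a $\vct 0$-contraction of $K\times K'$, and Proposition~\ref{prop:slep-ext}(3) applied to the monotonically increasing convex $f$ delivers $\mu_f(K\prtensor K')\leq\mu_f(K\times K')$, which is the claim.

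For~\eqref{eq:mom-nres-bd-2} I would instead take $K=\conv(C\cap S^{m-1})$ (whose generating set lies in $S^{m-1}$) and $K'=D\cap B^n$. Proposition~\ref{prop:tensprod-contract}(2) then shows that $(K\prtensor K')\times\{1\}$ is a norm-preserved contraction of $K\times K'$, and Proposition~\ref{prop:slep-ext}(2) gives $\mu_f((K\prtensor K')\times\{1\})\leq \mu_f(K\times K')$ for any monotonically increasing~$f$. Evaluating the left side on the Gaussian $(\vec(\vct G),\gamma)\in\IR^{mn+1}$, the extra coordinate contributes $\gamma$ to the support function, so $\mu_f((K\prtensor K')\times\{1\})=\Expect[f(\nres{G}{C}{D}+\gamma)]$. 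On the other side, $h_{\conv(C\cap S^{m-1})}(\vct g)\leq\|\Proj_C(\vct g)\|$ by~\eqref{eq:|Pi_C(x)|>=h_K(x)}, so monotonicity of~$f$ bounds $\mu_f(K\times K')$ by the claimed right-hand side.

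The conceptually delicate step is matching the generating sets to the available contraction lemmas. In~\eqref{eq:mom-nres-bd-1} the origin must sit in the generating set of $K$, which forces a genuine $\vct 0$-contraction (the Kronecker map shrinks norms at $\vct x=\vct 0$) and therefore requires convexity of~$f$ through Proposition~\ref{prop:slep-ext}(3). In~\eqref{eq:mom-nres-bd-2} convexity is traded for norm preservation by lifting to one extra coordinate, which is precisely what produces the Gaussian shift~$\gamma$ on the left-hand side. Once these choices are made, both inequalities follow mechanically from the two propositions.
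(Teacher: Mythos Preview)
Your proof is correct and follows essentially the same route as the paper: for~\eqref{eq:mom-nres-bd-1} you apply Proposition~\ref{prop:tensprod-contract}(1) together with Proposition~\ref{prop:slep-ext}(3) to the cone stubs, and for~\eqref{eq:mom-nres-bd-2} you apply Proposition~\ref{prop:tensprod-contract}(2) with Proposition~\ref{prop:slep-ext}(2) using $K=\conv(C\cap S^{m-1})$ and then finish via~\eqref{eq:|Pi_C(x)|>=h_K(x)}. Your explicit remark that both support functions are nonnegative (so that $f$ may be taken on $\IR_+$ as required by Proposition~\ref{prop:slep-ext}(3)) is a nice clarification the paper leaves implicit.
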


\begin{proof}
We denote in this proof $K:=\conv(C\cap S^{m-1})$, $K_0 := C\cap B^m$, and $K_0':=D\cap B^n$. Note that $K_0$ is generated by $\{\vct0\}\cup (C\cap S^{m-1})$. 
Combining Proposition~\ref{prop:tensprod-contract}(1) with Proposition~\ref{prop:slep-ext} yields
\begin{equation*}
   \Expect\big[ f(\nres{G}{C}{D})\big] = \mu_f(K_0\prtensor K_0') \leq \mu_f(K_0\times K_0') \stackrel{(*)}{=} \Expect\big[ f(\|\Proj_C(\vct g)\| + \|\Proj_D(\vct g')\|)\big] ,
\end{equation*}
where $(*)$ follows from $h_{K_0\times K_0'}(\vct v,\vct v')=h_{K_0}(\vct v)+h_{K_0'}(\vct v')$. The second claim follows analogously by applying Proposition~\ref{prop:tensprod-contract}(2) and Proposition~\ref{prop:slep-ext}:
\begin{equation}\label{eq:estim-E[f(|G|_(CD))]<=...}
   \Expect\big[ f(\nres{G}{C}{D}+\gamma)\big] = \mu_f\big((K\prtensor K_0')\times\{1\}\big) \leq \mu_f(K\times K_0') = \Expect\big[ f(h_K(\vct g) + \|\Proj_D(\vct g')\|)\big] ,
\end{equation}
and $h_K(\vct g)\leq\|\Proj_C(\vct g)\|$.
\end{proof}

The above proof actually shows a slightly stronger bound: from~\eqref{eq:estim-E[f(|G|_(CD))]<=...} and from the symmetry $\nres{G}{C}{D}=\nrest{G}{C}{D}$, it follows
\begin{equation*}
  \Expect\big[ f(\nres{G}{C}{D}+\gamma)\big] \leq \min\Big\{ \Expect\big[ f\big(h_K(\vct g) + \|\Proj_D(\vct g')\|\big)\big],\Expect\big[ f\big( \|\Proj_C(\vct g)\| + h_{K'}(\vct g')\big)\big]\Big\} ,
\end{equation*}
where $K=\conv(C\cap S^{m-1})$, $K'=\conv(D\cap S^{n-1})$.

\begin{remark}\label{rem:rhs-intrvol-norm}
Using the generalized Steiner formula~\eqref{eq:steiner-mike}, the right-hand sides in~\eqref{eq:mom-nres-bd-1} and \eqref{eq:mom-nres-bd-2} can be written in terms of the intrinsic volumes of~$C$ and~$D$:
\begin{equation}\label{eq:exp-rhs-upp-bd}
  \Expect\big[ f\big(\|\Proj_C(\vct g)\| + \|\Proj_D(\vct g')\|\big)\big] = \sum_{i=0}^m \sum_{j=0}^n v_i(C) \, v_j(D) \, \Expect\big[ f\big(\chi_i + \chi_j'\big)\big] ,
\end{equation}
where $\chi_0=\chi_0'=0$ and $\chi_1,\ldots,\chi_m,\chi_1',\ldots,\chi_n'$ denote independent chi-distributed random variables with $\chi_i$ and $\chi_j'$ having~$i$ and~$j$ degrees of freedom, respectively. We will make use of this expression in Section~\ref{sec:comp-bounds}.
\end{remark}

\subsection{Linear images of cones}\label{sec:lin_imag}

In conic integral geometry the random variable $\|\Proj_C(\vct g)\|$,
where $C\subseteq\IR^m$ a closed convex cone
and $\vct g\in\IR^m$ a standard Gaussian vector, plays an important role, as indicated in Section~\ref{sec:rel-intr-vols}.
In fact, the norm of the projection is a special case of a cone-restricted norm:
\begin{equation}\label{eq:normproj-resnorm}
 \|\Proj_C(\vct g)\| = \norm{\mtx{g}}_{\R_+\to C},
\end{equation}
where on the right-hand side we interpret $\mtx{g}\in \R^{m\times 1}$ as linear map. 
Using Proposition~\ref{prop:slep-ext} we will derive estimates for the moments of $\norm{\tilde{\mtx{G}}}_{\vct TC\to \mtx{U}D}$, where $C\subseteq\IR^m$ and $D\subseteq\IR^n$ closed convex cones, $\vct T\in \IR^{\ell\times m}$ and $\vct U\in\IR^{p\times n}$, and $\tilde{\vct G}$ is a Gaussian $(p\times \ell)$-matrix.

\begin{proposition}\label{prop:moments-res.norm}
Let $C\subseteq\IR^m$ and $D\subseteq\IR^n$ closed convex cones, $\vct T\in \IR^{\ell\times m}$ and $\vct U\in\IR^{p\times n}$. Then for $r\geq 1$,
\begin{equation*}
  \Expect\big[\norm{\tilde{\mtx{G}}}^r_{\vct TC\to \mtx{U}D}\big]\leq \Ren_C(\vct T)^r \; \Ren_D(\vct U)^r \; \Expect\big[\norm{\mtx{G}}_{C\to D}^r\big] ,
\end{equation*}
where $\tilde{\vct G}\in\IR^{p\times\ell}$ and $\vct G\in\IR^{n\times m}$ Gaussian matrices.
\end{proposition}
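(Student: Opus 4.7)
The plan is to express both sides as $\mu_f$ of convex tensor products with $f(x)=x^r$, and bridge them by a chain consisting of a set inclusion and a linear $\vct 0$-contraction. Put $K:=C\cap B^m$, $K':=D\cap B^n$, $\tilde K:=\vct TC\cap B^\ell$, $\tilde K':=\vct UD\cap B^p$. By~\eqref{eq:E[f(||G||_(C,D))]=mu_f(KK')}, $\Expect[\|\tilde{\vct G}\|_{\vct TC\to\vct UD}^r]=\mu_f(\tilde K\prtensor\tilde K')$ and $\Expect[\|\vct G\|_{C\to D}^r]=\mu_f(K\prtensor K')$.

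First I would establish the inclusion $\tilde K\subseteq\sres{T}{C}{\R^\ell}^{-1}\vct TK$: for $\vct y=\vct T\vct x'\in\tilde K$ with $\vct x'\in C$, the defining inequality $\|\vct T\vct x'\|\geq\sres{T}{C}{\R^\ell}\|\vct x'\|$ forces $\|\vct x'\|\leq\sres{T}{C}{\R^\ell}^{-1}$, and likewise $\tilde K'\subseteq\sres{U}{D}{\R^p}^{-1}\vct UK'$. Using the Kronecker identity $\vct T\vct x\otimes\vct U\vct y=(\vct T\otimes\vct U)(\vct x\otimes\vct y)$ together with the bilinearity and monotonicity of $\prtensor$ yields
\[
  \tilde K\prtensor\tilde K'\ \subseteq\ \bigl(\sres{T}{C}{\R^\ell}\sres{U}{D}{\R^p}\bigr)^{-1}(\vct T\otimes\vct U)(K\prtensor K').
\]
Passing through $\mu_f$ via the monotonicity of Proposition~\ref{prop:props-mu_f}(5) and the homogeneity $\mu_f(\lambda L)=\lambda^r\mu_f(L)$ (valid since $\vct 0$ lies in the body and $\lambda\geq 0$) reduces matters to bounding $\mu_f\bigl((\vct T\otimes\vct U)(K\prtensor K')\bigr)$ by $\|\vct T\|^r\|\vct U\|^r\,\mu_f(K\prtensor K')$.

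This remaining bound is precisely where Proposition~\ref{prop:slep-ext}(3) enters. The linear map $\phi(\vct z):=(\vct T\otimes\vct U)\vct z/(\|\vct T\|\|\vct U\|)$ has operator norm $1$, by the Kronecker norm identity $\|\vct T\otimes\vct U\|=\|\vct T\|\|\vct U\|$, hence is $1$-Lipschitz and norm-nonincreasing; taking $K\prtensor K'$ as its own generating set shows that $\phi(K\prtensor K')$ is a $\vct 0$-contraction of $K\prtensor K'$. Because $r\geq 1$, $f(x)=x^r$ is monotonically increasing and convex on $\R_+$---the natural range of support functions of bodies containing $\vct 0$---so Proposition~\ref{prop:slep-ext}(3) gives $\mu_f\bigl(\phi(K\prtensor K')\bigr)\leq\mu_f(K\prtensor K')$, which scales to the desired estimate. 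Chaining everything and identifying $\|\vct T\|/\sres{T}{C}{\R^\ell}=\Ren_C(\vct T)$, $\|\vct U\|/\sres{U}{D}{\R^p}=\Ren_D(\vct U)$ via the biconic dichotomy~\eqref{eq:min(sres(A,C,D),srestm(A,D,C))=0} of Proposition~\ref{prop:primaldual}(2)---which in the relevant regime forces $\srestm{T}{\R^\ell}{C}=0$ so that the nonzero term of the minimum in~\eqref{eq:renegars} is precisely $\|\vct T\|/\sres{T}{C}{\R^\ell}$---completes the proof.

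The main obstacle is ensuring that the $\vct 0$-contraction version~(3) of Proposition~\ref{prop:slep-ext}, rather than the norm-preserving version~(2), is available: the map $\phi$ is genuinely norm-decreasing whenever $\vct T$ or $\vct U$ fails to be an isometry, so the convexity hypothesis on $f$ is essential. This is the origin of the restriction $r\geq 1$, and is consistent with the authors' observation following~\eqref{eq:bod-lin-img} that the analogous bound can fail for $r<1$.
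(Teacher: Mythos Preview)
Your contraction step via the linear map $(\vct T\otimes\vct U)/(\|\vct T\|\,\|\vct U\|)$ is clean and correct; in fact it is simpler than the paper's Lemma~\ref{lem:TC-2}, which works one factor at a time. The gap is in the inclusion step and the subsequent identification with Renegar's condition number.

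You establish only $\tilde K\subseteq\sres{T}{C}{\R^\ell}^{-1}\vct TK$, which is vacuous when $\sres{T}{C}{\R^\ell}=0$. But $\Ren_C(\vct T)=\|\vct T\|/\max\{\sres{T}{C}{\R^\ell},\srestm{T}{\R^\ell}{C}\}$ can be finite in that case: take $C=\R^m$, $\ell<m$, and $\vct T$ of full rank; then $\vct TC=\R^\ell$, $\sres{T}{C}{\R^\ell}=0$ (the kernel meets $C$), yet $\srestm{T}{\R^\ell}{C}>0$ and $\Ren_C(\vct T)=\kappa(\vct T)<\infty$. Your appeal to the dichotomy~\eqref{eq:min(sres(A,C,D),srestm(A,D,C))=0} goes the wrong way: it says one of the two restricted singular values vanishes, but does not let you choose which. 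The paper closes this gap in Lemma~\ref{lem:TC-1} by proving the companion inclusion $\tilde K\subseteq\srestm{T}{\R^\ell}{C}^{-1}\vct TK$ via the characterization of $\srestm{T}{\R^\ell}{C}$ as the inradius of $\vct T(C\cap B^m)$, so that the inclusion holds with the \emph{maximum} of the two singular values in the denominator, matching $\Ren_C(\vct T)$ exactly.
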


\begin{lemma}\label{lem:TC-1}
Let $D\subseteq \IR^n$ closed convex cone and $\mtx{U}\in \R^{p\times n}$. Then
\begin{equation}
  \vct UD\cap B^p \,\subseteq\, \tfrac{1}{\lambda}\, \vct U(D\cap B^n) ,
\end{equation}
with $\lambda := \max\big\{\sres{U}{D}{\IR^p},\srestm{U}{\IR^p}{D}\big\}$.
\end{lemma}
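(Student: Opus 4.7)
The plan is to show that $\lambda z \in K := U(D \cap B^n)$ for every $z \in UD \cap B^p$, from which $z \in \tfrac{1}{\lambda} K$ follows at once. The case $\lambda = 0$ is vacuous (the right-hand side is then interpreted as $UD$), so assume $\lambda > 0$ and split according to which of the two restricted singular values attains the maximum defining $\lambda$.

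If $\lambda = \sres{U}{D}{\IR^p}$, then by the defining extremal property and homogeneity of the restricted singular value one has $\|Ux\| \ge \lambda\|x\|$ for every $x \in D$. Picking any preimage $x_0 \in D$ of $z$ under $U$ (which exists since $z \in UD$) gives $\|x_0\| \le \|z\|/\lambda \le 1/\lambda$, so $x' := \lambda x_0 \in D \cap B^n$ and $Ux' = \lambda z$, as required.

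If instead $\lambda = \srestm{U}{\IR^p}{D}$, I would argue by duality. The set $K$ is compact and convex, so if $\lambda z \notin K$, Hahn--Banach separation furnishes a unit vector $y \in S^{p-1}$ with
\[
\langle y, \lambda z\rangle \;>\; \sup_{x \in D \cap B^n} \langle U^T y,\,x\rangle \;=\; \|\Proj_D(U^T y)\|,
\]
the identity being the standard formula $\sup_{x \in D \cap B^n}\langle v, x\rangle = \|\Proj_D(v)\|$ (used, for instance, in the proof of Lemma~\ref{lem:A^(-1)(D^*)}). The left-hand side is bounded by $\lambda\|z\| \le \lambda$. For the right-hand side, the $\pm$-symmetry of $\IR^p$ lets one rewrite the defining formula $\srestm{U}{\IR^p}{D} = \min_{y \in S^{p-1}} \|\Proj_D(-U^T y)\|$ as $\lambda = \min_{y \in S^{p-1}} \|\Proj_D(U^T y)\|$, so $\|\Proj_D(U^T y)\| \ge \lambda$. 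The chain $\lambda \ge \langle y,\lambda z\rangle > \|\Proj_D(U^T y)\| \ge \lambda$ is a contradiction, hence $\lambda z \in K$.

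The main obstacle is the second case: in that regime the first restricted singular value $\sres{U}{D}{\IR^p}$ is typically zero by~\eqref{eq:min(sres(A,C,D),srestm(A,D,C))=0}, so no direct preimage argument is available and one is forced through separation. What makes the duality step succeed is the $y \mapsto -y$ symmetry of the ambient cone $\IR^p$, which converts the lower bound naturally phrased via $\|\Proj_D(-U^T y)\|$ in the definition of $\srestm$ into one on $\|\Proj_D(U^T y)\|$, which is precisely the quantity the Hahn--Banach separation produces.
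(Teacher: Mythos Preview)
Your proof is correct and follows the same two-case structure as the paper. The only difference is cosmetic: in the second case the paper identifies $\srestm{U}{\IR^p}{D}$ with the inradius $\max\{r\geq 0\mid rB^p\subseteq \vct U(D\cap B^n)\}$ and concludes $B^p\subseteq\tfrac{1}{\lambda}\,\vct U(D\cap B^n)$ directly, whereas you unfold that identification into an explicit Hahn--Banach separation, which amounts to the same argument.
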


\begin{proof}
Let $\lambda_1:=\sres{U}{D}{\IR^p}$, $\lambda_2:=\srestm{U}{\IR^p}{D}$. We will show in two steps that $\vct UD\cap B^p \,\subseteq\, \tfrac{1}{\lambda_1}\, \vct U(D\cap B^n)$ and $\vct UD\cap B^p \,\subseteq\, \tfrac{1}{\lambda_2}\, \vct U(D\cap B^n)$.

(1) Since $\vct UD\cap B^p$ as well as $\vct U(D\cap B^n)$ contain the origin, it suffices to show that $\vct UD\cap S^{p-1}\subseteq \tfrac{1}{\lambda_1}\, \vct U(D\cap B^n)$. Every element in $\vct UD\cap S^{p-1}$ can be written as $\frac{\vct{Uy}_0}{\|\vct{Uy}_0\|}$ for some $\vct y_0\in D\cap S^{n-1}$, and since $\sres{U}{D}{\IR^p}=\min_{\vct y\in D\cap S^{n-1}}\|\vct{Uy}\|\leq \|\vct{Uy}_0\|$, we obtain $\sres{U}{D}{\IR^p}\frac{\vct{Uy}_0}{\|\vct{Uy}_0\|}\in \conv\{\vct0,\vct{Uy}_0\}\subseteq\vct U(D\cap B^n)$. This shows $\vct UD\cap S^{p-1}\subseteq \tfrac{1}{\lambda_1}\, \vct U(D\cap B^n)$.

(2) Recall from~\eqref{eq:D(C,D)} that $\srestm{U}{\IR^p}{D}>0$ only if $(\vct UD)^\polar=\{\vct0\}$, i.e., $\vct UD=\IR^p$. Observe that
\begin{align*}
   \srestm{U}{\IR^p}{D} & = \min_{\vct z\in\IR^p} \max_{\vct y\in D\cap B^n} \langle \vct{Uy},\vct z\rangle = \max\big\{r\geq0\mid r B^p\subseteq \vct U(D\cap B^n)\big\} .
\end{align*}
This shows $B^p \subseteq \tfrac{1}{\lambda_2}\, \vct U(D\cap B^n)$ and thus finishes the proof.
\end{proof}

\begin{lemma}\label{lem:TC-2}
Let $K,K'$ be convex bodies such that $K=\conv(M)$ for some closed set $M\subseteq S^{m-1}$ and $\vct0\in K'$, and let $L:=\spa(K')$ the linear hull of~$K'$. If $\vct T$ denotes a linear transformation on~$L$, then $K\prtensor \mtx{T}K'$ is a $\zerovct$-contraction of $K\prtensor \norm{\mtx{T}} K'$. 
\end{lemma}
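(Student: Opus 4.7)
The plan is to exhibit an explicit 1-Lipschitz, norm-decreasing surjection between natural generating sets of $K \prtensor \|\vct T\| K'$ and $K \prtensor \vct T K'$. First note that $\vct 0 \in K_1 \cap K_2$ is automatic from $\vct 0 \in K'$ (take any $\vct x \in M$ and form $\vct x \otimes \vct 0$). Since $K = \conv(M)$ and the tensor product is bilinear, bilinearity together with Carath\'eodory gives
\begin{align*}
   K \prtensor \|\vct T\| K' &= \conv\big\{ \|\vct T\|(\vct x \otimes \vct y) \mid \vct x \in M, \vct y \in K'\big\} =: \conv M_1 ,\\
   K \prtensor \vct T K' &= \conv\big\{\vct x \otimes \vct T\vct y \mid \vct x \in M, \vct y \in K'\big\} =: \conv M_2 .
\end{align*}
I would then define $\phi\colon M_1 \to M_2$ by $\phi(\|\vct T\|(\vct x \otimes \vct y)) := \vct x \otimes \vct T\vct y$, which is manifestly surjective.

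The first step is to verify well-definedness. Suppose $\|\vct T\|(\vct x_1\otimes\vct y_1) = \|\vct T\|(\vct x_2\otimes\vct y_2)$ with $\vct x_i\in M\subseteq S^{m-1}$, $\vct y_i\in K'$. Excluding the trivial cases $\|\vct T\|=0$ or $\vct y_i = \vct 0$, the condition $\vct x_1\otimes\vct y_1=\vct x_2\otimes\vct y_2$ together with $\|\vct x_i\|=1$ forces $\vct x_2 = \pm\vct x_1$ and $\vct y_2 = \pm\vct y_1$ with matching signs, under which $\vct x_1\otimes\vct T\vct y_1 = \vct x_2 \otimes \vct T\vct y_2$. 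The norm-decreasing property is immediate: $\|\phi(\|\vct T\|(\vct x\otimes\vct y))\| = \|\vct T\vct y\| \leq \|\vct T\|\,\|\vct y\| = \|\|\vct T\|(\vct x\otimes\vct y)\|$ using $\|\vct x\|=1$.

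The heart of the proof, and the main obstacle, is the 1-Lipschitz estimate. Expanding squared norms one computes
\begin{align*}
 \|\|\vct T\|(\vct x_1\otimes\vct y_1)-\|\vct T\|(\vct x_2\otimes\vct y_2)\|^2 - \|\vct x_1\otimes\vct T\vct y_1 - \vct x_2\otimes\vct T\vct y_2\|^2
 = \langle\vct y_1,P\vct y_1\rangle + \langle\vct y_2,P\vct y_2\rangle - 2\langle\vct x_1,\vct x_2\rangle\langle\vct y_1,P\vct y_2\rangle ,
\end{align*}
where $P := \|\vct T\|^2 \Id_L - \vct T^T\vct T$ is a positive semidefinite operator on $L$ by the definition of the operator norm. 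Since $P\psdge 0$, the bilinear form $(\vct u,\vct w)\mapsto \langle\vct u, P\vct w\rangle$ satisfies Cauchy--Schwarz, giving $|\langle\vct y_1,P\vct y_2\rangle| \leq \sqrt{\langle\vct y_1,P\vct y_1\rangle\langle\vct y_2,P\vct y_2\rangle}$. Combined with $|\langle\vct x_1,\vct x_2\rangle|\leq 1$ (as $\vct x_i\in S^{m-1}$) and AM--GM, the right-hand side above is nonnegative, yielding the 1-Lipschitz bound.

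The delicate point is precisely that $\phi$ is not obviously a contraction: the map $(\vct x,\vct y)\mapsto \vct x\otimes\vct y$ itself can be expanding (as illustrated in the discussion preceding Proposition~\ref{prop:tensprod-contract}), so it is essential that we are comparing two tensor-type sets and that the normalization $\|\vct x\|=1$ can be exploited to reduce everything to a Cauchy--Schwarz argument in the PSD form $P$. With these four ingredients---generation, well-definedness, norm decrease, and the $P$-based Lipschitz bound---$\phi$ realizes $K\prtensor \vct T K'$ as a $\vct 0$-contraction of $K \prtensor \|\vct T\| K'$ in the sense of Definition~\ref{def:contr-body}.
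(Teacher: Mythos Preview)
Your proof is correct and follows essentially the same route as the paper: the same map $\phi(\|\vct T\|(\vct x\otimes\vct y))=\vct x\otimes\vct T\vct y$ on $M\otimes K'$, the same norm-decrease check via $\|\vct T\vct y\|\leq\|\vct T\|\,\|\vct y\|$, and the same expanded difference of squared norms for the Lipschitz estimate. The only cosmetic difference is that you package the final inequality through the positive semidefinite operator $P=\|\vct T\|^2\Id_L-\vct T^{T}\vct T$ and finish with Cauchy--Schwarz plus AM--GM, whereas the paper splits into the two cases $\langle\vct T\vct y_1,\vct T\vct y_2\rangle\lessgtr\|\vct T\|^2\langle\vct y_1,\vct y_2\rangle$ and reduces each to $\|\vct T(\vct y_1\mp\vct y_2)\|\leq\|\vct T\|\,\|\vct y_1\mp\vct y_2\|$; these are two equivalent ways to exploit $|\langle\vct x_1,\vct x_2\rangle|\leq1$ together with $P\succeq0$.
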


\begin{proof}
Note that the norm of the difference of two rank one matrices can be written as
\begin{align*}
   \|\vct x_1\otimes \vct y_1 - \vct x_2\otimes\vct y_2\|^2 & = \|\vct x_1\|^2\|\vct y_1\|^2 - 2\langle\vct x_1,\vct x_2\rangle \langle\vct y_1,\vct y_2\rangle + \|\vct x_2\|^2\|\vct y_2\|^2 .
\end{align*}
So for $\|\vct x_1\|=\|\vct x_2\|=1$, 
\begin{align*}
   & \|\vct x_1\otimes \vct y_1 - \vct x_2\otimes\vct y_2\|^2 - \|\vct x_1\otimes \vct z_1 - \vct x_2\otimes\vct z_2\|^2
\\ & = \|\vct y_1\|^2 + \|\vct y_2\|^2 - \|\vct z_1\|^2 - \|\vct z_2\|^2 + 2\langle\vct x_1,\vct x_2\rangle \big(\langle\vct z_1,\vct z_2\rangle-\langle\vct y_1,\vct y_2\rangle\big)
\\ & \geq \begin{cases}
             \|\vct y_1\|^2 + \|\vct y_2\|^2 - 2\langle\vct y_1,\vct y_2\rangle - \|\vct z_1\|^2 - \|\vct z_2\|^2 + 2\langle\vct z_1,\vct z_2\rangle
          \\ = \|\vct y_1-\vct y_2\|^2 - \|\vct z_1-\vct z_2\|^2 & \text{if } \langle\vct z_1,\vct z_2\rangle\leq \langle\vct y_1,\vct y_2\rangle
          \\[2mm] \|\vct y_1\|^2 + \|\vct y_2\|^2 + 2\langle\vct y_1,\vct y_2\rangle - \|\vct z_1\|^2 - \|\vct z_2\|^2 - 2\langle\vct z_1,\vct z_2\rangle
          \\ = \|\vct y_1+\vct y_2\|^2 - \|\vct z_1+\vct z_2\|^2 & \text{if } \langle\vct z_1,\vct z_2\rangle\geq \langle\vct y_1,\vct y_2\rangle .
          \end{cases}
\end{align*}
Setting $\phi\colon M\otimes \|\vct T\|K'\to M\otimes \vct TK'$, $\phi(\vct x\otimes \|\vct T\|\vct x'):=\vct x\otimes \vct{Tx}'$, we have $\|\phi(\vct x\otimes \|\vct T\|\vct x')\|=\|\vct{Tx}'\|\leq\|\vct T\|\,\|\vct x'\|=\big\|\vct x\otimes \|\vct T\|\vct x'\big\|$, and from the above computation, with $\vct y_i=\|\vct T\|\vct x'_i$ and $\vct z_i=\vct{Tx}'_i$, $i=1,2$, we obtain either
  \[ \big\|\vct x_1\otimes \|\vct T\|\vct x'_1 - \vct x_2\otimes\|\vct T\|\vct x'_2\big\|^2 - \big\|\vct x_1\otimes \vct{Tx}'_1 - \vct x_2\otimes \vct{Tx}'_2\big\|^2 \geq \|\vct T\|^2\,\|\vct x_1'-\vct x_2'\|^2 - \|\vct T(\vct x_1'-\vct x_2')\|^2 \geq 0 , \]
or
  \[ \big\|\vct x_1\otimes \|\vct T\|\vct x'_1 - \vct x_2\otimes\|\vct T\|\vct x'_2\big\|^2 - \big\|\vct x_1\otimes \vct{Tx}'_1 - \vct x_2\otimes \vct{Tx}'_2\big\|^2 \geq \|\vct T\|^2\,\|\vct x_1'+\vct x_2'\|^2 - \|\vct T(\vct x_1'+\vct x_2')\|^2 \geq 0 . \]
This shows that $K\prtensor \mtx{T}K'$ is a $\zerovct$-contraction of $K\prtensor \norm{\mtx{T}} K'$.
\end{proof}

\begin{proof}[Proof of Proposition~\ref{prop:moments-res.norm}]
Assume first that $\ell=m$ and $\vct T=\Id_m$. As in Lemma~\ref{lem:TC-1}, let $\lambda := \max\big\{\sres{U}{D}{\IR^p},\srestm{U}{\IR^p}{D}\big\}$, so that
\begin{align*}
  \Expect\big[\norm{\mtx{G}}_{C\to\mtx{U}D}^r\big] & = \Expect\big[\big(\max_{\vct{x}\in C\cap S^{m-1}}\max_{\vct{y}\in \mtx{U}D\cap B^p}\ip{\mtx{G}\vct{x}}{\vct{y}}\big)^r\big]
\\ & \leq \lambda^{-r}\Expect\big[\big(\max_{\vct{x}\in C\cap B^m}\max_{\vct{y}\in \mtx{U}(D\cap B^n)}\ip{\mtx{G}\vct{x}}{\vct{y}}\big)^r\big] = \lambda^{-r}\mu_f(K\prtensor \mtx{U}K') ,
\end{align*}
where $f(t):=t^r$, $K:=C\cap B^m$, and $K':=D\cap B^n$. From Lemma~\ref{lem:TC-2} and Slepian's Inequality~(3) in Proposition~\ref{prop:slep-ext} we obtain $\mu_f(K\prtensor \mtx{U}K')\leq \norm{\mtx{U}}^{r} \mu_f(K\prtensor K') = \norm{\mtx{U}}^{r}\Expect\big[\norm{\mtx{G}}_{C\to D}^r\big]$, so that
  \[ \Expect\big[\norm{\mtx{G}}_{C\to\mtx{U}D}^r\big] \leq \Bigg(\frac{\norm{\mtx{U}}}{\max\big\{\sres{U}{D}{\IR^p},\srestm{U}{\IR^p}{D}\big\}}\Bigg)^{r} \Expect\big[\norm{\mtx{G}}_{C\to D}^r\big] = \Ren_D(\vct U)^r \; \Expect\big[\norm{\mtx{G}}_{C\to D}^r\big] . \]
This shows the claim for $\ell=m$ and $\vct T=\Id_m$. For the general case we use the symmetry of the restricted norm,
\begin{align*}
   \Expect\big[\norm{\mtx{G}}^r_{\vct TC\to \mtx{U}D}\big] & \leq \Ren_D(\vct U)^r \; \Expect\big[\norm{\mtx{G}}_{\vct TC\to D}^r\big] = \Ren_D(\vct U)^r \; \Expect\big[\norm{-\mtx{G}^T}_{D\to\vct TC}^r\big]
\\ & \leq \Ren_C(\vct T)^r \; \Ren_D(\vct U)^r \; \Expect\big[\norm{-\mtx{G}^T}_{D\to C}^r\big] = \Ren_C(\vct T)^r \; \Ren_D(\vct U)^r \; \Expect\big[\norm{\mtx{G}}_{C\to D}^r\big] . \qedhere
\end{align*}
\end{proof}

From the simple observation~\eqref{eq:normproj-resnorm} we obtain the following immediate corollary.

\begin{corollary}
Let $C\subseteq\IR^m$ closed convex cone, and
\begin{equation*}
 \nu_r(C) := \Expect\big[\norm{\Proj_C(\vct{g})}^r\big] ,
\end{equation*}
where $\vct{g}\in\IR^m$ Gaussian. Then for $\vct T\in\IR^{\ell\times m}$, and $r\geq1$,
\begin{equation}\label{eq:mu_r(TC)<=...}
  \nu_r(\mtx{T}C) \leq \Ren_C(\mtx{T})^r\nu_r(C).
\end{equation}
In particular, if $\ell=m$ then
\begin{align}\label{eq:mu_r(TC)<=...-kappa}
  \frac{\nu_r(C)}{\kappa(\vct T)^r} & \leq \nu_r(\mtx{T}C) \leq \kappa(\mtx{T})^r\nu_r(C) & \Bigg( r=2:\quad \frac{\sdim(C)}{\kappa(\vct T)^2} & \leq \sdim(\vct TC) \leq \kappa(\vct T)^2\, \sdim(C)\Bigg) .
\end{align}
\end{corollary}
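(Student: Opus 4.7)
The starting point is the identification~\eqref{eq:normproj-resnorm}, which lets us rewrite
\[
  \nu_r(C) = \Expect\big[\|\vct g\|_{\IR_+\to C}^r\big], \qquad \nu_r(\vct TC) = \Expect\big[\|\tilde{\vct g}\|_{\IR_+\to \vct TC}^r\big],
\]
by viewing Gaussian vectors as random linear maps from~$\IR$. With this in hand, the upper bound~\eqref{eq:mu_r(TC)<=...} is a direct specialization of Proposition~\ref{prop:moments-res.norm}: take the two cones there to be $\IR_+$ and~$C$, take the linear map on the domain side to be the $1\times 1$ identity, and take the codomain map to be~$\vct T$. One quickly checks from the definitions that $\Ren_{\IR_+}(1)=1$, since $\sres{1}{\IR_+}{\IR^1}=1$ (the set $\IR_+\cap S^0$ is the single point $1$) while $\srestm{1}{\IR^1}{\IR_+}=0$. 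Hence only the factor $\Ren_C(\vct T)^r$ survives in Proposition~\ref{prop:moments-res.norm}, which gives~\eqref{eq:mu_r(TC)<=...}.

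\textbf{The square case.} Assuming now $\ell=m$, the upper bound in~\eqref{eq:mu_r(TC)<=...-kappa} is immediate from~\eqref{eq:mu_r(TC)<=...} combined with the comparison~\eqref{eq:R_C(A)<=kappa(A)}, which gives $\Ren_C(\vct T)\leq\kappa(\vct T)$. For the lower bound, we may assume~$\vct T$ is invertible, as otherwise $\kappa(\vct T)=\infty$ and the inequality is vacuous. Apply the just-established upper bound with the cone~$\vct TC$ and the linear map~$\vct T^{-1}$:
\[
  \nu_r(C) = \nu_r\big(\vct T^{-1}(\vct TC)\big) \leq \kappa(\vct T^{-1})^r\,\nu_r(\vct TC) = \kappa(\vct T)^r\,\nu_r(\vct TC),
\]
and rearrange. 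The parenthetical $r=2$ statement is then the same inequality with $\nu_2$ read as~$\sdim$, via the identity $\sdim(C)=\Expect[\|\Pi_C(\vct g)\|^2]=\nu_2(C)$ recalled in Section~\ref{sec:intro-conic-integr-geom}.

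\textbf{Main obstacle.} There is really no substantial obstacle: the statement is essentially a direct consequence of Proposition~\ref{prop:moments-res.norm} applied in the rank-one domain setting, combined with an inversion of~$\vct T$ for the lower bound. The only bookkeeping that requires a line of care is the verification $\Ren_{\IR_+}(1)=1$, which follows immediately from unwinding the definitions of~$\P$ and~$\D$ in the one-dimensional setting, and the observation that invertibility of~$\vct T$ is the only case in which the lower bound is not vacuous.
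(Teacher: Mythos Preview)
Your proof is correct and follows essentially the same route as the paper: invoke~\eqref{eq:normproj-resnorm} to recast $\nu_r$ as a restricted-norm moment, apply Proposition~\ref{prop:moments-res.norm} with domain cone~$\IR_+$ and identity map on that side, then use $\Ren_C(\vct T)\leq\kappa(\vct T)$ together with the inversion $C=\vct T^{-1}(\vct TC)$, $\kappa(\vct T^{-1})=\kappa(\vct T)$ for the two-sided bound. Your explicit verification that $\Ren_{\IR_+}(1)=1$ is a detail the paper leaves implicit (it simply calls the corollary ``immediate''), but it is the right thing to check and your computation is correct.
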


Here,~\eqref{eq:mu_r(TC)<=...-kappa} follows from the inequality $\Ren_C(\mtx{T})\leq \kappa(\mtx{T})$, cf.~\eqref{eq:R_C(A)<=kappa(A)}, and by considering $C=\vct T^{-1}\vct TC$ and using $\kappa(\vct T)=\kappa(\vct T^{-1})$ to obtain the lower bound.

\begin{example}[circular cones]
Let $\Circ_m(\alpha)=\{\vct x\in\IR^m\mid x_1\geq\|\vct x\|\cos\alpha\}$ denote the circular cone of radius~$\alpha$ around the first coordinate vector. For our purposes it is more convenient to use $\tan\alpha$ instead of~$\alpha$, so we define $C_m(t):=\Circ_m(\arctan(t))$. Consider the linear map $\vct T:=\diag(1,s,\ldots,s)$ with $s\geq1$, whose condition number is $\kappa(\vct T)=s$. Then $\vct TC_m(t)=C_m(s t)$, and by~\eqref{eq:mu_r(TC)<=...-kappa} we have
\begin{equation}\label{eq:quot-kappa^rmu_r(C_d(t))/...}
  \frac{s^r\nu_r(C_m(t))}{\nu_r(C_m(s t))}\geq 1
\end{equation}
for $r\geq1$. Using Proposition~\ref{prop:expr-intr-vol} we can express~$\nu_r(C)$ in terms of the intrinsic volumes of~$C$: for $r>0$
  \[ \nu_r(C) = \sum_{j=1}^m v_j(C)\Expect[\|\vct g_j\|^r] = \sum_{j=1}^m v_j(C) \,\frac{2^{r/2}\,\Gamma(\frac{j+r}{2})}{\Gamma(\frac{j}{2})} , \]
where $\vct g_j\in\IR^j$ denotes a standard Gaussian vector. The intrinsic volumes of the circular cones are given by, cf.~\cite[Ex.~4.4.8]{am:thesis}
\begin{align*}
   v_j(C_m(t)) & = \frac{\Gamma(\frac{m}{2})\,t^j}{2\,\Gamma(\frac{j+1}{2})\Gamma(\frac{m-j+1}{2})\,(1+t^2)^{(m-2)/2}}, \quad \text{for } j=1,\ldots,m-1,
\\ v_m(C_m(t)) & = \frac{\Gamma(\frac{m}{2})}{\sqrt{\pi}\,\Gamma(\frac{m-1}{2})} \int_0^t \frac{\tau^{m-2}}{(1+\tau^2)^{n/2}}\,d\tau .
\end{align*}
Using these formulas we can compute $\nu_r(D_n(t))$. 

\begin{figure}[ht]
   \begin{center}
   \def\myXsc{4}
   \def\myYsc{1.1}
   \def\myEpsy{0.02}
   \def\myEpsx{\myEpsy*\myXsc/\myYsc}
   \subfloat[$m=50$]{\begin{tikzpicture}[xscale=\myXsc, yscale=\myYsc]
      \draw (0,0) -- (0,4) (0,0) -- (1,0);
      \foreach \y in {0,1,2,3,4}
        \draw (0,\y) -- ++(-\myEpsy,0) node[left=-1mm]{$\scriptscriptstyle\y$};
      \foreach \x in {0,0.2,0.4,0.6,0.8,1}
        \draw (\x,0) -- ++(0,-\myEpsx) node[below=-1mm]{$\scriptscriptstyle\x$};
      \draw[color=blue] plot file {tables/quot_kappa=2_r=0.5_d=50.table};
      \draw[color=blue] plot file {tables/quot_kappa=2_r=1_d=50.table};
      \draw[color=blue] plot file {tables/quot_kappa=2_r=2_d=50.table};
      \draw[color=red] (0,1) -- (1,1);
      \path (0.5,0) node[below=2pt]{$\scriptstyle t$}
            (1,2.4) node[above left]{$\scriptscriptstyle r=2$}
            (1,1.5) node[above left]{$\scriptscriptstyle r=1$}
            (1,1.17) node[above left]{$\scriptscriptstyle r=0.5$};
   \end{tikzpicture}}
   \hfill
   \subfloat[$m=100$]{\begin{tikzpicture}[xscale=\myXsc, yscale=\myYsc]
      \draw (0,0) -- (0,4) (0,0) -- (1,0);
      \foreach \y in {0,1,2,3,4}
        \draw (0,\y) -- ++(-\myEpsy,0) node[left=-1mm]{$\scriptscriptstyle\y$};
      \foreach \x in {0,0.2,0.4,0.6,0.8,1}
        \draw (\x,0) -- ++(0,-\myEpsx) node[below=-1mm]{$\scriptscriptstyle\x$};
      \draw[color=blue] plot file {tables/quot_kappa=2_r=0.5_d=100.table};
      \draw[color=blue] plot file {tables/quot_kappa=2_r=1_d=100.table};
      \draw[color=blue] plot file {tables/quot_kappa=2_r=2_d=100.table};
      \draw[color=red] (0,1) -- (1,1);
      \path (0.5,0) node[below=2pt]{$\scriptstyle t$}
            (1,2.4) node[above left]{$\scriptscriptstyle r=2$}
            (1,1.5) node[above left]{$\scriptscriptstyle r=1$}
            (1,1.17) node[above left]{$\scriptscriptstyle r=0.5$};
   \end{tikzpicture}}
   \hfill
   \subfloat[$m=200$]{\begin{tikzpicture}[xscale=\myXsc, yscale=\myYsc]
      \draw (0,0) -- (0,4) (0,0) -- (1,0);
      \foreach \y in {0,1,2,3,4}
        \draw (0,\y) -- ++(-\myEpsy,0) node[left=-1mm]{$\scriptscriptstyle\y$};
      \foreach \x in {0,0.2,0.4,0.6,0.8,1}
        \draw (\x,0) -- ++(0,-\myEpsx) node[below=-1mm]{$\scriptscriptstyle\x$};
      \draw[color=blue] plot file {tables/quot_kappa=2_r=0.5_d=200.table};
      \draw[color=blue] plot file {tables/quot_kappa=2_r=1_d=200.table};
      \draw[color=blue] plot file {tables/quot_kappa=2_r=2_d=200.table};
      \draw[color=red] (0,1) -- (1,1);
      \path (0.5,0) node[below=2pt]{$\scriptstyle t$}
            (1,2.4) node[above left]{$\scriptscriptstyle r=2$}
            (1,1.5) node[above left]{$\scriptscriptstyle r=1$}
            (1,1.17) node[above left]{$\scriptscriptstyle r=0.5$};
   \end{tikzpicture}}
   \end{center}
   \caption{Plot of the quotient $s^r\nu_r(C_m(t))/\nu_r(C_m(s t))$, cf.~\eqref{eq:quot-kappa^rmu_r(C_d(t))/...}, with $s=2$, $m\in\{50,100,200\}$, $r\in\{0.5,1,2\}$.}
   \label{fig:quot-circ}
\end{figure}
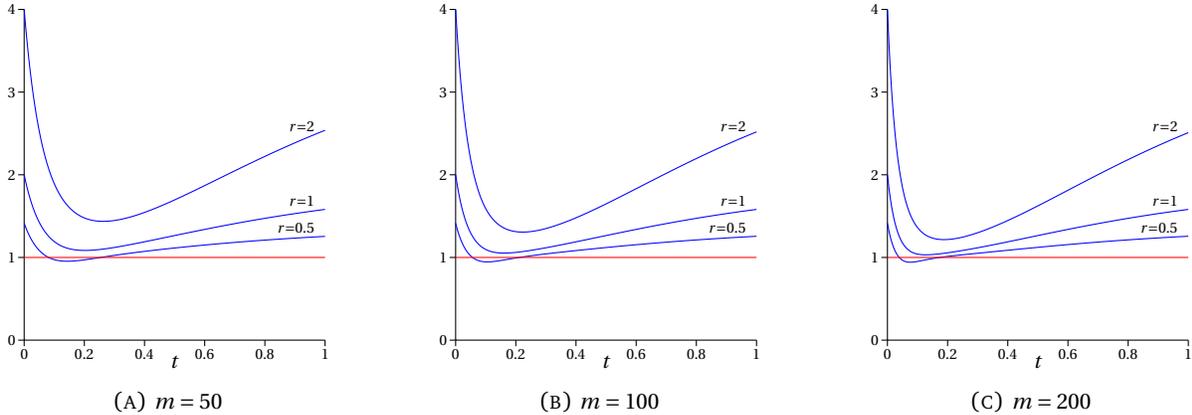

Figure~\ref{fig:quot-circ} shows a plot of the quotient in~\eqref{eq:quot-kappa^rmu_r(C_d(t))/...} for $s=2$, $m\in\{50,100,200\}$, and $r\in\{0.5,1,2\}$. The plot shows that the inequality~\eqref{eq:quot-kappa^rmu_r(C_d(t))/...} may be violated if $r<1$, which ultimately shows that the convexity assumptions in Proposition~\ref{prop:slep-ext} and Theorem~\ref{prop:gordon-with-0} may not be dropped. The plots also indicate that the inequality~\eqref{eq:quot-kappa^rmu_r(C_d(t))/...} is asymptotically sharp for $m\to\infty$. This could be shown with an analysis similar to the one given in~\cite[Sec.~6.3]{MT:13}; we leave the details to the interested reader.
\end{example}

\section{Moment functionals of convex bundles}\label{sec:mom-functls-bdls}

In this section we generalize the moment functionals to the setting of convex bundles, which we introduced in Section~\ref{sec:conv-geom-restr-norm-sval}. 
This setting allows one to study the smallest singular value $\sres{G}{C}{D}$ of a Gaussian matrix $\mtx{G}$ restricted to convex cones.
Following a similar structure as that of Section~\ref{sec:mom-func}, we begin in Section~\ref{sec:intro-mom-func-bundles} by introducing the moment functionals of bundles and describe some typical examples. 
Section~\ref{sec:contr-ineq-bdls} then introduces the notion of bundle contractions and describes the Gordon inequalities as monotonicity properties of moment functionals of convex bundles. 
As an application we get bounds of the moments of restricted singular values of a Gaussian matrix in terms of conic intrinsic volumes.

Throughout this section we will repeatedly refer to sets $M,M',K,K',C,D$.
Unless otherwise stated, we will always assume $M\subset \R^m$, $M'\subset \R^n$ to be compact sets, $K:=\conv(M)$, $K':=\conv(M')$, and $C\subseteq \R^m$, $D\subseteq \R^n$
to be closed convex cones. 

\subsection{Introduction of moment functionals.}\label{sec:intro-mom-func-bundles}
Recall that the support function of a convex bundle $F\colon M\to \K(\IR^N)$ is given by
  \[ h_F\colon\IR^N\to\IR ,\qquad h_F(\vct v) = \min_{\vct x\in M} \, \max_{\vct z\in F(\vct x)} \langle\vct v,\vct z\rangle . \]
We extend the moment functionals from Definition~\ref{def:mom-func} to convex bundles via
  \[ \mu_f(F) := \Expect\big[f(h_F(\vct g)) \big] = \Expect\Big[ f\Big(\min_{\vct x\in M} \, \max_{\vct z\in F(\vct x)} \langle\vct g,\vct z\rangle\Big)\Big] , \]
where $f\colon\IR\to\IR$ Borel measurable and $\vct g\in\IR^N$ Gaussian. Again, we denote the extension of the Gaussian width by
  \[ w(F) := \mu_{\id}(F) = \Expect\Big[ \min_{\vct x\in M} \, \max_{\vct z\in F(\vct x)} \langle\vct g,\vct z\rangle\Big] . \]

\begin{example}[Product bundle]\label{ex:prod-bundle}
The product bundle of~$K'$ over~$M$, denoted $M\to M\times K'$, is the map
$F\colon M\to \K(\IR^{m+n})$, $\vct x \mapsto \{\vct x\} \times K'$.
For $(\vct v,\vct v')\in\IR^{m+n}$ we have
\begin{align*}
   h_F(\vct v,\vct v') & = \min_{\vct x\in M} \, \max_{(\vct x,\vct x')\in M\times K'} \langle(\vct v,\vct v'),(\vct x,\vct x')\rangle = \min_{\vct x\in M} \langle\vct v,\vct x\rangle + \max_{\vct x'\in K'} \langle\vct v',\vct x'\rangle = \max_{\vct x'\in K'} \langle\vct v',\vct x'\rangle - \max_{\vct x\in K} \langle-\vct v,\vct x\rangle
\\ & = h_{K'}(\vct v') - h_K(-\vct v) .
\end{align*}
The moment functionals of this bundle are given by
\begin{equation}\label{eq:momfunc-prodbdl}
  \mu_f(M\to M\times K') = \Expect\big[ f\big(h_{K'}(\vct g') - h_K(\vct g)\big)\big] ,
\end{equation}
in particular, $w(M\to M\times K') = w(K') - w(K)$. 
\end{example}

\begin{example}[Tensor bundle]\label{ex:tensorbundle}
Recall from Section~\ref{sec:conv-geom-restr-norm-sval} that the (convex) tensor bundle of~$K'$ over~$M$, denoted $M\to M\prtensor K'$, is the map
$F\colon M\to \K(\IR^m\otimes\IR^n)$, $\vct x \mapsto \{\vct x\} \otimes K'$.
We can write the support function in the form
  \[ h_F(\vec(\vct A)) = \min_{\vct x\in M}\, \max_{\vct y\in K'} \, \langle \vct{Ax},\vct y \rangle , \]
where $\vct A\in\IR^{n\times m}$.
In the special case where $M=C\cap S^{m-1}$ and $K'=D\cap B^{n}$
we obtain $h_F(\vec(\vct A))=\sres{A}{C}{D}$. For the corresponding moment functionals,
  \[ \mu_f(M\to M\prtensor K') = \Expect\big[ f\big(\sres{G}{C}{D}\big)\big] , \]
where $\vct G\in\IR^{n\times m}$ Gaussian.
\end{example}

\begin{example}[Affine tensor bundle]\label{ex:exttensorbundle}
A variation of the above tensor bundle is the affine tensor bundle of~$K'$ over~$M$, denoted $M\tdash M\prtensor K'$ and defined by means of
$F\colon M\to \K\big((\IR^m\otimes\IR^n)\times\IR\big)$, $\vct x \mapsto (\{\vct x\} \otimes K')\times \{1\}$.
That is, the tensor bundle is embedded in an affine space at height one.
We can write the support function in the form
  \[ h_F(\vec(\vct A),\lambda) = \min_{\vct x\in M}\, \max_{\vct y\in K'} \, \langle \vct{Ax},\vct y \rangle + \lambda , \]
where $\vct A\in\IR^{n\times m}$ and $\lambda\in \R$.
If $M=C\cap S^{m-1}$, $K'=D\cap B^{n}$ then we obtain $h_F(\vec(\vct A),\lambda)=\sres{A}{C}{D}+\lambda$. In particular,
\begin{equation}\label{eq:momfunc-atensbdl}
  \mu_f(M\tdash M\prtensor K') = \Expect\big[ f(\sres{G}{C}{D}+\gamma)\big] ,
\end{equation}
where $\vct G\in\IR^{n\times m}$ a Gaussian matrix and $\gamma$ an independent Gaussian random variable.
\end{example}

\subsection{Contraction inequalities}\label{sec:contr-ineq-bdls}

We next turn to the problem of comparing two bundles over the same base set. The goal of this section is elaborate on the extent to which Proposition~\ref{prop:slep-ext} generalizes to convex bundles.

Let $F\colon M\to\K(\IR^N)$ be a convex bundle over the compact base set $M\subset\IR^m$ and let $G$ be a set-valued map on~$M$ with $G(\vct x)\subset\IR^N$.
We say that $G$ \emph{generates} the convex bundle~$F$, if the fibers of~$F$ are generated by the fibers of~$G$, i.e., $F(\vct x)=\clconv(G(\vct x))$ for all $\vct x\in M$.

The following definition extends the notion of contraction, cf.~Definition~\ref{def:contr-body}, to the setting of convex bundles. Broadly speaking, a bundle contraction is a contraction within the fibers and an expansion across the fibers.

\begin{definition}\label{def:contr-bdls}
Let $F_1,F_2\colon M\to \K(\IR^N)$ be convex bundles over the compact base set~$M\subset\IR^m$. We say that $F_2$ is a \emph{contraction} of $F_1$ if there exist generators $G_1,G_2$ of $F_1,F_2$, respectively, and surjective maps $\vp_{\vct x}\colon G_1(\vct x)\to G_2(\vct x)$,  $\vct x\in M$,
such that
\begin{align}
  \|\vp_{\vct x}(\vct y)-\vp_{\vct x}(\vct y')\| &\leq \|\vct y - \vct y'\| \quad \text{for all $\vct y,\vct y'\in G_1(\vct x)$},\label{eq:lipschitz-prop-bdl}\\
  \|\vp_{\vct x}(\vct y)-\vp_{\vct x'}(\vct y')\| &\geq \|\vct y - \vct y'\| \quad \text{for all $\vct y\in G_1(\vct x),\vct y'\in G_1(\vct x'),\vct x\neq \vct x'$}\label{eq:expans-prop-bdl} .
\end{align}
If additionally $\|\vp_{\vct x}(\vct y)\| = \|\vct y\|$ for all $\vct x\in M$ and for all $\vct y\in G_1(\vct x)$, then we say that $F_2$ is a \emph{norm-preserved contraction} of $F_1$.

If $\vct0\in F_1(\vct x)\cap F_2(\vct x)$ for all $\vct x\in M$, we say that $F_2$ is a \emph{$\vct0$-contraction} of $F_1$ if there exist surjective maps $\vp_{\vct x}\colon G_1(\vct x)\to G_2(\vct x)$, which satisfy~\eqref{eq:lipschitz-prop-bdl} and~\eqref{eq:expans-prop-bdl}, and which additionally satisfy $\|\vp_{\vct x}(\vct y)\|\leq\|\vct y\|$ for all $\vct x\in M$ and all $\vct y\in G_1(\vct x)$.
\end{definition}

The following theorem is based on Gordon's Theorem~\ref{thm:gordon} and the extension in Theorem~\ref{prop:gordon-with-0}.

\begin{theorem}\label{thm:Gordon-geom}
Let $F_1,F_2\colon M\to\in\K(\IR^N)$ be convex bundles over the compact base set $M\subset\IR^m$, and let $f\colon\IR\to\IR$ .
\begin{enumerate}
  \item If $F_2$ is a contraction of $F_1$, then $w(F_2)\leq w(F_1)$.
  \item If $F_2$ is a norm-preserved contraction of $F_1$ and~$f$ monotonically increasing, then $\mu_f(F_2)\leq \mu_f(F_1)$.
  \item If $F_2$ is a $\vct0$-contraction of~$F_1$ and~$f$ monotonically increasing and convex, then $\mu_f(F_2)\leq \mu_f(F_1)$.
\end{enumerate}
\end{theorem}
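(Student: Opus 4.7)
The plan is to rephrase each of the three statements as a Gaussian process comparison and then invoke, respectively, the Sudakov--Fernique form of Gordon's inequality (Theorem~\ref{thm:gordon}) or its variance-augmented extension (Theorem~\ref{prop:gordon-with-0}) from the appendix.

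For the setup, for $k\in\{1,2\}$ introduce the canonical centered Gaussian process
\begin{equation*}
 X^{(k)}_{\vct x,\vct y}:=\langle\vct g,\vct y\rangle,\qquad \vct x\in M,\ \vct y\in G_k(\vct x),
\end{equation*}
where $\vct g\in\IR^N$ is standard Gaussian and $G_k$ is a generator of $F_k$. The identity $F_k(\vct x)=\clconv(G_k(\vct x))$ gives $h_{F_k}(\vct g)=\min_{\vct x\in M}\max_{\vct y\in G_k(\vct x)} X^{(k)}_{\vct x,\vct y}$. Pull $X^{(2)}$ back to the index set of $X^{(1)}$ via the surjections $\vp_{\vct x}$: set $\tilde X_{\vct x,\vct y}:=\langle\vct g,\vp_{\vct x}(\vct y)\rangle$ for $\vct y\in G_1(\vct x)$, so that surjectivity of each $\vp_{\vct x}$ yields the pointwise identity $h_{F_2}(\vct g)=\min_{\vct x\in M}\max_{\vct y\in G_1(\vct x)}\tilde X_{\vct x,\vct y}$ as random variables. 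Conditions~\eqref{eq:lipschitz-prop-bdl}--\eqref{eq:expans-prop-bdl} of Definition~\ref{def:contr-bdls} translate into the covariance comparisons
\begin{align*}
 \Expect\bigl[(\tilde X_{\vct x,\vct y}-\tilde X_{\vct x,\vct y'})^2\bigr] & \leq \Expect\bigl[(X^{(1)}_{\vct x,\vct y}-X^{(1)}_{\vct x,\vct y'})^2\bigr], \\
 \Expect\bigl[(\tilde X_{\vct x,\vct y}-\tilde X_{\vct x',\vct y'})^2\bigr] & \geq \Expect\bigl[(X^{(1)}_{\vct x,\vct y}-X^{(1)}_{\vct x',\vct y'})^2\bigr]\quad(\vct x\ne\vct x'),
\end{align*}
which are the standard hypotheses of a Gordon-type inequality.

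For Part~(1), Theorem~\ref{thm:gordon} requires only these distance inequalities and gives $\Expect[\min\max\tilde X]\leq\Expect[\min\max X^{(1)}]$, i.e.\ $w(F_2)\leq w(F_1)$. For Part~(2), norm-preservation $\|\vp_{\vct x}(\vct y)\|=\|\vct y\|$ forces the variances of the two processes to agree, so the sharper distributional form of Gordon's theorem applies and yields that $h_{F_2}(\vct g)$ is stochastically dominated by $h_{F_1}(\vct g)$; monotonicity of $f$ then gives $\mu_f(F_2)\leq\mu_f(F_1)$.

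Part~(3) is the delicate case, since now $\Var(\tilde X_{\vct x,\vct y})=\|\vp_{\vct x}(\vct y)\|^2\leq\|\vct y\|^2$ with generally strict inequality, and the variance-matching hypothesis of the classical Gordon theorem fails. The plan is to reduce to Theorem~\ref{prop:gordon-with-0}, the appendix's variance-augmented extension, whose hypotheses are the same-$\vct x$ and different-$\vct x$ distance inequalities above together with nonnegativity of the min-max statistic and a monotone convex test function. The assumption $\vct 0\in F_1(\vct x)\cap F_2(\vct x)$ supplies the nonnegativity, since $\vct z=\vct 0$ is admissible in each inner maximum and thus $h_{F_k}(\vct g)\geq 0$; the convexity and monotonicity of $f$ complete the hypotheses. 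The main obstacle is precisely this reduction: a naive augmentation of $\tilde X$ by an independent centered Gaussian field of variance $\|\vct y\|^2-\|\vp_{\vct x}(\vct y)\|^2$ recovers variance matching and preserves the different-$\vct x$ inequality, but generally destroys the same-$\vct x$ inequality. The role of Theorem~\ref{prop:gordon-with-0} is to show, via a Jensen-type step, that the excess same-$\vct x$ variance is harmless against convex monotone $f$ once nonnegativity of the min-max statistic is in hand. The circular-cone computation at the end of Section~\ref{sec:lin_imag} confirms numerically that the convexity hypothesis cannot be dropped.
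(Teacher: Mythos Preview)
Your approach matches the paper's: reduce to finitely indexed Gaussian families and invoke Gordon's theorem (Theorem~\ref{thm:gordon}) for parts~(1)--(2) and its extension (Theorem~\ref{prop:gordon-with-0}) for part~(3). Parts~(1) and~(2) are handled correctly; the paper does the same, after first passing to finite $M$ and finite fibers by continuity (a step you should make explicit, since both comparison theorems are stated for finite index sets).

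Part~(3), however, is muddled in a way that matters. You describe the hypotheses of Theorem~\ref{prop:gordon-with-0} as ``the same-$\vct x$ and different-$\vct x$ distance inequalities together with nonnegativity of the min--max statistic.'' That is not the theorem. Its third hypothesis is the variance-type inequality $\Expect|X_{ij}-X_0|^2\geq\Expect|Y_{ij}-Y_0|^2$, and the paper applies it in the degenerate case $X_0=Y_0=0$, where this becomes $\|\vct y\|^2\geq\|\vp_{\vct x}(\vct y)\|^2$---precisely the norm condition in the definition of $\vct0$-contraction. You note this variance inequality holds, but never recognize it as the missing third hypothesis; instead you claim the hypotheses are ``completed'' by nonnegativity and convexity.

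The role of $\vct0\in F_1(\vct x)\cap F_2(\vct x)$ is thus twofold, and you have conflated the two uses. First, the norm condition $\|\vp_{\vct x}(\vct y)\|\leq\|\vct y\|$ verifies the third hypothesis of Theorem~\ref{prop:gordon-with-0}. Second, $\vct0\in F_k(\vct x)$ forces $h_{F_k}(\vct g)\geq0$, which is needed \emph{after} the theorem is applied, to identify $\mu_f(F_k)=\Expect[f(h_{F_k}(\vct g))]$ with $\Expect[\min_i\max_j f_+(\cdot)]$ (the conclusion of Theorem~\ref{prop:gordon-with-0} involves $f_+$, not $f$). Your discussion of ``naive augmentation'' and a ``Jensen-type step'' is heuristic motivation for why the extension exists, but is not the actual application; the fix is simply to say: apply Theorem~\ref{prop:gordon-with-0} with $X_{ij}=\langle\vct g,\vct y\rangle$, $Y_{ij}=\langle\vct g,\vp_{\vct x}(\vct y)\rangle$, $X_0=Y_0=0$, and check the three hypotheses directly from Definition~\ref{def:contr-bdls}.
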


Before indulging in the proof, we mention that a typical example of a monotonically increasing
function in~(2) would be the step function $f(t)=1_{t>\lambda}$, leading to inequalities between the distributions of the support functions $h_{F_2}(\vct g)$ and $h_{F_1}(\vct g)$.
Typical examples for functions appearing
in the context of claim~(3) are powers $t^r \ (r\geq 1)$ and $e^{\lambda t}$, leading to moment inequalities.

Note that Proposition~\ref{prop:slep-ext} is the special case $|M|=1$ of Theorem~\ref{thm:Gordon-geom}, i.e., the bundles have only one fiber.

\begin{proof}
We first show claim~(3). Let $G_1,G_2$ be generators of $F_1,F_2$, respectively, such that the properties of a $\vct0$-contraction in Definition~\ref{def:contr-bdls} are satisfied with a corresponding set of 
maps $\{\vp_{\vct x}\mid \vct x\in M\}$ satisfying~\eqref{eq:lipschitz-prop-bdl} and~\eqref{eq:expans-prop-bdl} and the additional property $\|\vp_{\vct x}(\vct y)\|\leq\|\vct y\|$ for all $\vct x\in M$ and all $\vct y\in G_1(\vct x)$.

By a standard continuity argument we may assume that~$M$ as well as all fibers $G_1(\vct x),G_2(\vct x)$, $\vct x\in M$, are finite sets. Furthermore, by embedding the sets in a high-dimensional space, we may assume without loss of generality that $M$ has at most $m$ elements and each fiber $G_1(\vct x),G_2(\vct x)$ has at most $N$ elements. Concretely, let $M=\{\vct x_1,\ldots,\vct x_m\}$, where we allow repetitions among the~$\vct x_i$, and let
\begin{align*}
   G_1(\vct x_i) & = \{\vct x_{ij}\mid 1\leq j\leq N\} , & G_2(\vct x_i) & = \{\vct y_{ij}\mid 1\leq j\leq N\} ,
\end{align*}
allowing repetitions on the $\vct x_{ij}$ and $\vct y_{ij}$ as well.
Define centered Gaussian random variables $X_{ij},Y_{ij}$, $1\leq i\leq m$, $1\leq j\leq N$, via
\begin{align*}
   X_{ij} & := \langle \vct x_{ij},\vct g\rangle , & Y_{ij} & := \langle \vct y_{ij},\vct g\rangle ,
\end{align*}
where $\vct g\in\IR^N$ is a standard Gaussian vector. The properties of the maps $\vp_{\vct x}$ imply that
\begin{align*}
   \Expect |X_{ij}-X_{k\ell}|^2 & \leq \Expect |Y_{ij}-Y_{k\ell}|^2 , & & \hspace{-2cm} \text{for all } i\neq k \text{ and } j,\ell ,
\\ \Expect |X_{ij}-X_{i\ell}|^2 & \geq \Expect |Y_{ij}-Y_{i\ell}|^2 , & & \hspace{-2cm} \text{for all } i,j,\ell ,
\\ \Expect |X_{ij}|^2 & \geq \Expect |Y_{ij}|^2 , & & \hspace{-2cm} \text{for all } i,j .
\end{align*}
Applying Theorem~\ref{prop:gordon-with-0} in the degenerate case $X_0:=Y_0:=0$ yields
  \[ \mu_f(F_2) = \Expect \min_i\max_j f_+(Y_{ij}) \leq \Expect \min_i\max_j f_+(X_{ij}) = \mu_f(F_1) .  \]

Claims~(1) and~(2) follow from Gordon's Theorem~\ref{thm:gordon} by analogous arguments as above.
\end{proof}

\subsection{Moments of the restricted singular value of a Gaussian matrix}\label{sec:sres}

In this section we apply the contraction inequalities in Theorem~\ref{thm:Gordon-geom} to the
product bundle and the affine tensor bundle, described in Section~\ref{sec:intro-mom-func-bundles}.
In particular, we get inequalities involving the restricted singular values and differences of support functionals.

The following proposition is based on the same arguments as Proposition~\ref{prop:tensprod-contract}.

\begin{proposition}\label{prop:tensprod-contract-bdl}
Let $K'\subseteq B^n$ convex body, and let $M\subseteq S^{m-1}$ closed and $K:=\conv(M)$. Then the product bundle $M\to M\times K'$ is a norm-preserved contraction of the affine tensor bundle $M\tdash M\prtensor K'$.
\end{proposition}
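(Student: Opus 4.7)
The plan is to verify Definition~\ref{def:contr-bdls} directly by exhibiting suitable generators together with fiber-wise maps. I take the fibers themselves as generators: $G_1(\vct x):=(\{\vct x\}\otimes K')\times\{1\}$ for the affine tensor bundle $F_1$ and $G_2(\vct x):=\{\vct x\}\times K'$ for the product bundle $F_2$, both of which are already convex, so $F_i(\vct x)=\clconv(G_i(\vct x))$. Since $\vct x\in M\subseteq S^{m-1}$ has unit norm, the tensor factor $\vct x\otimes\vct y$ determines~$\vct y$ uniquely; hence the assignment
\[ \vp_{\vct x}\colon G_1(\vct x)\to G_2(\vct x),\qquad (\vct x\otimes\vct y,1)\mapsto (\vct x,\vct y), \]
is a well-defined surjection.

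Next I would check the three defining conditions. Norm preservation is immediate from
\[ \|(\vct x\otimes\vct y,1)\|^2 = \|\vct x\|^2\|\vct y\|^2 + 1 = \|\vct x\|^2+\|\vct y\|^2 = \|(\vct x,\vct y)\|^2. \]
Within one fiber, both $\|(\vct x\otimes\vct y,1)-(\vct x\otimes\vct y',1)\|$ and $\|(\vct x,\vct y)-(\vct x,\vct y')\|$ collapse to $\|\vct y-\vct y'\|$, so~\eqref{eq:lipschitz-prop-bdl} holds with equality. The only substantive point is the across-fiber expansion~\eqref{eq:expans-prop-bdl}: for $\vct x\neq\vct x'$ in~$M$ and $\vct y,\vct y'\in K'$, expanding both sides and using $\|\vct x\|=\|\vct x'\|=1$ one obtains
\[ \|\vp_{\vct x}(\vct x\otimes\vct y,1)-\vp_{\vct x'}(\vct x'\otimes\vct y',1)\|^2 - \|(\vct x\otimes\vct y,1)-(\vct x'\otimes\vct y',1)\|^2 = 2\,(1-\langle\vct x,\vct x'\rangle)(1-\langle\vct y,\vct y'\rangle), \]
which is nonnegative since $\|\vct x\|=\|\vct x'\|=1$ gives $\langle\vct x,\vct x'\rangle\leq 1$ and $\|\vct y\|,\|\vct y'\|\leq 1$ gives $\langle\vct y,\vct y'\rangle\leq 1$. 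This is essentially the identity already used in the body-level statement, namely equation~\eqref{eq:dist_dir/Kron-prod_|x|=1} in the proof of Proposition~\ref{prop:tensprod-contract}(2).

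There is no real obstacle: the whole argument is the fiberwise repetition of Proposition~\ref{prop:tensprod-contract}(2). What the bundle perspective does make transparent is why the $\times\{1\}$ offset is essential. Without it the right-hand side of the key identity would lose precisely the contribution that converts $\|\vct x\otimes\vct y-\vct x'\otimes\vct y'\|^2$ into $\|(\vct x,\vct y)-(\vct x',\vct y')\|^2$, and~\eqref{eq:expans-prop-bdl} would fail—so the statement must be formulated with the affine tensor bundle $M\tdash M\prtensor K'$ rather than with the plain tensor bundle $M\to M\prtensor K'$.
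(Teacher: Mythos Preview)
Your proof is correct and follows essentially the same route as the paper's own proof: the same fiberwise map $(\vct x\otimes\vct y,1)\mapsto(\vct x,\vct y)$, the same norm-preservation check, and the same key identity $2(1-\langle\vct x,\vct x'\rangle)(1-\langle\vct y,\vct y'\rangle)$ for the difference of squared distances. Your version is slightly more explicit in separating the within-fiber and across-fiber cases and in noting the well-definedness of~$\vp_{\vct x}$, but the argument is the same.
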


\begin{proof}
Define $\varphi_{\vct{x}}\colon \{1\}\times (\{\vct{x}\}\otimes K')\to \{\vct{x}\}\times K'$ via $\varphi_{\vct{x}}(1,\vct{x}\otimes\vct{y}) = (\vct{x},\vct{y})$. 
 As in the proof of Proposition~\ref{prop:tensprod-contract} one concludes that
 \begin{equation*}
  \norm{(1,\vct{x}_1\otimes \vct{y}_1)-(1,\vct{x}_2\otimes \vct{y}_2)}^2 - \norm{(\vct{x}_1,\vct{y}_1)-(\vct{x}_2,\vct{y}_2)}^2 = -2(1-\ip{\vct{x}_1}{\vct{x}_2})(1-\ip{\vct{y}_1}{\vct{y}_2}),
 \end{equation*}
which is zero if $\vct{x}_1=\vct{x}_2$, and nonpositive if $\vct{x}_1\neq \vct{x}_2$. Finally,
\begin{equation*}
 \norm{(\vct{x},\vct{y})}^2 = 1+\norm{\vct{y}}^2 = 1+\norm{\vct{x}\otimes \vct{y}}^2 = \norm{\varphi(\vct{x},\vct{y})}^2. \qedhere
\end{equation*}
\end{proof}

As a corollary we obtain the final claim~\eqref{eq:moments-restr-singvals-claim-sval} in Theorem~\ref{thm:moments-restr-singvals}, which we recall here.

\begin{corollary}
Let $C\subseteq\IR^m$, $D\subseteq\IR^n$ closed convex cones, and let $\vct G\in\IR^{n\times m}$, $\vct g\in\IR^m$, $\vct g'\in\IR^n$, and $\gamma\in\IR$ all independent Gaussian. If $f\colon\IR\to\IR$ monotonically increasing, then
\begin{equation}\label{eq:mom-sval-bd}
   \Expect\big[ f(\sres{G}{C}{D}+\gamma)\big] \geq \Expect\big[ f\big(\|\Proj_D(\vct g')\| - \|\Proj_C(\vct g)\| \big)\big] .
\end{equation}
\end{corollary}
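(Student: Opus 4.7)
The plan is to assemble this directly from the bundle machinery developed in the section. Set $M := C \cap S^{m-1}$, $K := \conv(M) = \conv(C \cap S^{m-1})$, and $K' := D \cap B^n$, so that $K'$ is the cone stub of $D$. With these choices, Example~\ref{ex:exttensorbundle} identifies the affine tensor bundle's moment functional with the quantity of interest on the left-hand side,
\begin{equation*}
  \mu_f(M \tdash M \prtensor K') = \Expect\big[ f(\sres{G}{C}{D} + \gamma)\big],
\end{equation*}
and Example~\ref{ex:prod-bundle} gives, after a sign flip using the symmetry of the Gaussian distribution,
\begin{equation*}
  \mu_f(M \to M \times K') = \Expect\big[ f\big(h_{K'}(\vct g') - h_K(\vct g)\big)\big].
\end{equation*}

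Next, I would invoke Proposition~\ref{prop:tensprod-contract-bdl}, which says that the product bundle $M \to M \times K'$ is a norm-preserved contraction of the affine tensor bundle $M \tdash M \prtensor K'$. Since $f$ is monotonically increasing, Theorem~\ref{thm:Gordon-geom}(2) — the Gordon-type inequality for norm-preserved bundle contractions — yields
\begin{equation*}
  \Expect\big[ f\big(h_{K'}(\vct g') - h_K(\vct g)\big)\big] = \mu_f(M \to M \times K') \leq \mu_f(M \tdash M \prtensor K') = \Expect\big[ f(\sres{G}{C}{D} + \gamma)\big].
\end{equation*}

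It remains to replace the support functions by the projections. Because $K' = D \cap B^n$ is a cone stub, formula~\eqref{eq:|Pi_C(x)|=h_K(x)} gives the equality $h_{K'}(\vct g') = \|\Proj_D(\vct g')\|$. For the other term, $K = \conv(C \cap S^{m-1})$ and~\eqref{eq:|Pi_C(x)|>=h_K(x)} yields the inequality $h_K(\vct g) \leq \|\Proj_C(\vct g)\|$, hence
\begin{equation*}
  h_{K'}(\vct g') - h_K(\vct g) \geq \|\Proj_D(\vct g')\| - \|\Proj_C(\vct g)\|.
\end{equation*}
Applying the monotonicity of $f$ once more and chaining with the preceding inequality proves~\eqref{eq:mom-sval-bd}.

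I do not expect any real obstacles here: the work has been done in setting up the bundle formalism, identifying the correct moment functionals (Examples~\ref{ex:prod-bundle} and~\ref{ex:exttensorbundle}), and establishing the contraction property (Proposition~\ref{prop:tensprod-contract-bdl}) together with Gordon's comparison inequality in its bundle form (Theorem~\ref{thm:Gordon-geom}). The only point requiring a small amount of care is the orientation of the sign in the product bundle's support function — invoking the symmetry $-\vct g \stackrel{d}{=} \vct g$ of the Gaussian distribution disposes of it cleanly.
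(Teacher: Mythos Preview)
Your proof is correct and essentially identical to the paper's: both set $M=C\cap S^{m-1}$, $K'=D\cap B^n$, invoke Proposition~\ref{prop:tensprod-contract-bdl} together with Theorem~\ref{thm:Gordon-geom}(2) to compare the moment functionals of the product and affine tensor bundles, and then finish with the inequality $h_K(\vct g)\leq\|\Proj_C(\vct g)\|$ from~\eqref{eq:|Pi_C(x)|>=h_K(x)}. Your write-up is just slightly more explicit about the Gaussian sign symmetry and the identity $h_{K'}(\vct g')=\|\Proj_D(\vct g')\|$, which the paper absorbs into the reference to~\eqref{eq:momfunc-prodbdl}.
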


\begin{proof}
Let $M:= C\cap S^{m-1}$ and $K':=D\cap B^n$. Then Proposition~\ref{prop:tensprod-contract-bdl} implies that the product bundle $M\to M\times K'$ is a norm-preserved contraction of the affine tensor bundle $M\tdash M\prtensor K'$, and Theorem~\ref{thm:Gordon-geom} implies
  \[ \Expect\big[f\big(\norm{\Proj_D(\vct{g}')}-h_{K}(\vct{g})\big)\big] \stackrel{\eqref{eq:momfunc-prodbdl}}{=} \mu_f(M\to M\times K')\leq \mu_f(M\tdash M\prtensor K') \stackrel{\eqref{eq:momfunc-atensbdl}}{=} \Expect\big[ f(\sres{G}{C}{D}+\gamma)\big] . \]
The claim follows from $h_K(\vct x) \leq \|\Pi_C(\vct x)\|$, cf.~\eqref{eq:|Pi_C(x)|>=h_K(x)} (see also Remark~\ref{rem:normprojC-h_K}).
\end{proof}

\begin{remark}\label{rem:rhs-intrvol-sval}
Analogous to Remark~\ref{rem:rhs-intrvol-norm} we note that the right-hand side in~\eqref{eq:mom-sval-bd} can be written in terms of the intrinsic volumes of~$C$ and~$D$:
\begin{equation}\label{eq:exp-rhs-upp-bd-sval}
  \Expect\big[ f\big(\|\Proj_D(\vct g')\| - \|\Proj_C(\vct g)\|\big)\big] = \sum_{i=0}^m \sum_{j=0}^n v_i(C) \, v_j(D) \, \Expect\big[ f\big(\chi_j' - \chi_i\big)\big] ,
\end{equation}
where $\chi_0=\chi_0'=0$ and $\chi_1,\ldots,\chi_m,\chi_1',\ldots,\chi_n'$ denote independent chi-distributed random variables with $\chi_i$ and $\chi_j'$ having~$i$ and~$j$ degrees of freedom, respectively. Section~\ref{sec:comp-bounds} discusses applications of this expression.
\end{remark}

\section{Comparing bounds}\label{sec:comp-bounds}

In this section we present the bounds for the distributions of~$\nres{G}{C}{D}$ and~$\sres{G}{C}{D}$, which arise from Gordon's comparison theorem and which have found applications in the analysis of convex relaxation methods, cf.~Section~\ref{sec:intro}. These bounds are well known, but using the theory of intrinsic volumes we will present new aspects to elucidate certain interesting relations and to sharpen some bounds. Finally, we will present a short description of bounds stemming from tube formulas, and arrange these into the context of restricted singular values.

In this section we generally assume that $C\subseteq\IR^m$, $D\subseteq\IR^n$ closed convex cones, and $\vct G\in\IR^{n\times m}$, $\vct g\in\IR^m$, $\vct g'\in\IR^n$ independent Gaussians.

\subsection{Gaussian width, intrinsic volumes, statistical dimension}\label{sec:gwidth-edim-sdim}

Before getting to the discussion of the bounds, we clarify the relations between the Gaussian width, the intrinsic volumes, and the statistical dimension of a cone. The intrinsic volumes $v_0(C),\ldots,v_m(C)$, introduced in Section~\ref{sec:rel-intr-vols} through the conic Steiner formula~\eqref{eq:Steiner-conic}, form a discrete probability distribution on~$\{0,\ldots,m\}$. The \emph{statistical dimension} is defined as the mean of this distribution, which happens to coincide with the expected squared norm of the projection of a Gaussian vector onto the cone, sometimes also called ``squared Gaussian complexity''~\cite{chan:12},
\begin{equation}\label{eq:def-sdim}
  \sdim(C) := \sum_{k=0}^m k\, v_k(C) = \Expect\big[ \norm{\Proj_C(\vct g)}^2 \big] .
\end{equation}
The importance of the statistical dimension stems from the fact that the intrinsic volumes of~$C$ concentrate around~$\sdim(C)$, so that this quantity can serve as a summary parameter for the whole distribution. This concentration property has been shown in~\cite{edge,MT:13} alongside with some applications, cf.~also~\cite{MT:13b}.

Besides capturing the moments of the norm of a projected Gaussian vector, as seen through the generalized Steiner formula~\eqref{eq:steiner-mike}, the intrinsic volumes also yield exact formulas, so-called \emph{kinematic formulas}, for certain probabilities like the probability for nontrivial intersection of a cone with a uniformly random linear subspace. One of these formulas covers the probability that the restricted singular value of a Gaussian matrix is zero:
recall from~\eqref{eq:P(C,D)} that $\sres{G}{C}{D}=0$ if and only if $C\cap \big(\vct G^T D\big)^\polar\neq\{\vct0\}$. Clearly, if $C=\IR^m$ and $D=\IR^n$, then almost surely $C\cap \big(\vct G^T D\big)^\polar=\{\vct0\}$ if $n\geq m$ and $C\cap \big(\vct G^T D\big)^\polar\neq\{\vct0\}$ if $n< m$. If at least one of the cones is not a linear subspace, then the kinematic formula yields
\begin{equation}\label{eq:prob{sres(G)=0}}
  \Prob\{\sres{G}{C}{D}=0\} = 2\sum_{\substack{k=1\\ k \text{ odd}}}^m \Bigg(\sum_{\ell=0}^{m-n} v_k(C) \, v_\ell(D) + \sum_{\ell=1}^{m-k} v_{k+\ell}(C) \, v_\ell(D)\Bigg) .
\end{equation}
For a proof of this equation we refer to a forthcoming survey. From this equation and from the concentration of intrinsic volumes around their mean~\cite{edge} one also deduces that for cones basically the same threshold behavior appears as for the linear subspaces: $\Prob\{\sres{G}{C}{D}=0\} \approx 0$ if $\sdim(C)<\sdim(D)$ and $\Prob\{\sres{G}{C}{D}=0\} \approx 1$ if $\sdim(C)>\sdim(D)$.

Closely related to the statistical dimension, and in fact chronologically older, is the Gaussian width of a cone, or more precisely, of its intersection with the unit ball.\footnote{Instead of the intersection with the unit ball one can also take the intersection with the unit sphere. But the difference between these quantities is marginal, cf.~Remark~\ref{rem:normprojC-h_K}.} 
We denote the squared Gaussian width by $\sdimw(C)$, i.e.,
\begin{equation}\label{eq:def-sdimw}
  \sdimw(C) := w(C\cap B^m)^2 = \Expect\big[ \norm{\Proj_C(\vct g)} \big]^2 = \Bigg( \sum_{k=1}^m \frac{\sqrt{2}\,\Gamma(\frac{k+1}{2})}{\Gamma(\frac{k}{2})} \, v_k(C) \Bigg)^2 ,
\end{equation}
where the last equality follows from the generalized Steiner formula~\eqref{eq:steiner-mike}. The quantity~$\sdimw(C)$ is closely related to the statistical dimension, in fact,
  \[ \sdimw(C) \leq \sdim(C) \leq \sdimw(C)+1 \]
where the first inequality follows from Jensen's inequality, and where the second inequality has been shown in~\cite{edge}. This ``approximate statistical dimension''~$\sdimw(C)$, is more convenient to work with in the context of Gordon's comparison inequality and has found numerous applications in the literature~\cite{RV:08,stojnic10,CRPW:12,FR:13,RK:13,OTH:13}. However, the statistical dimension
has much better algebraic properties 
and is more natural in the context of cones (in fact, it can be argued~\cite{edge} that the statistical dimension is the \emph{canonical} extension of the dimension from linear subspaces to convex cones), which is why we interpret $\sdimw(C)$ as an approximate version of the statistical dimension~$\sdim(C)$, and not the other way round.

\subsection{Bounds from comparison theorems}

We deal with the distributions of~$\nres{G}{C}{D}$ and $\sres{G}{C}{D}$ through the tail of~$\nres{G}{C}{D}$, $\Prob\big\{\nres{G}{C}{D}\geq \lambda\big\}$, and through the cumulative distribution function (cdf) of~$\sres{G}{C}{D}$, $\Prob\big\{\sres{G}{C}{D}\leq \lambda\big\}$, where $\lambda\geq0$.
Slepian's and Gordon's comparison inequalities open up basically two ways of estimating these functions. First, Gaussian concentration of measure~\cite[Thm.~1.7.6]{Bog:98} yields
\begin{align*}
   \Prob\big\{\nres{G}{C}{D}\geq \mu_1+\lambda\big\} & \leq \exp\big(-\tfrac{\lambda^2}{2}\big) , & \text{where}\qquad \mu_1 & := \Expect[\nres{G}{C}{D}] ,
\\ \Prob\big\{\sres{G}{C}{D}\leq \mu_2-\lambda\big\} & \leq \exp\big(-\tfrac{\lambda^2}{2}\big) , & \text{where}\qquad \mu_2 & := \Expect[\sres{G}{C}{D}] .
\end{align*}
Slightly reformulated, we have
\begin{align}\label{eq:bounds-conc}
   \Prob\big\{\nres{G}{C}{D}\geq \lambda\big\} & \leq \exp\bigg(-\frac{\max\{0,\lambda-\mu_1\}^2}{2}\bigg) , &
   \Prob\big\{\sres{G}{C}{D}\leq \lambda\big\} & \leq \exp\bigg(-\frac{\max\{0,\mu_2-\lambda\}^2}{2}\bigg) .
\end{align}
Second, using the step function $f(t)=1_{t>\lambda}$ we obtain from~\eqref{eq:moments-restr-singvals-claim-norm}
\begin{align*}
   \Prob\big\{\nres{G}{C}{D}+\gamma \geq\lambda\big\} & \leq \Prob\big\{ \|\Proj_D(\vct g')\| + \|\Proj_C(\vct g)\|\geq\lambda\big\} ,
\intertext{and similarly,~\eqref{eq:moments-restr-singvals-claim-sval} implies $\Prob\big\{\sres{G}{C}{D}+\gamma \geq\lambda\big\} \geq \Prob\big\{ \|\Proj_D(\vct g')\| - \|\Proj_C(\vct g)\|\geq\lambda\big\}$, or equivalently,}
   \Prob\big\{\sres{G}{C}{D}+\gamma \leq\lambda\big\} & \leq \Prob\big\{ \|\Proj_D(\vct g')\| - \|\Proj_C(\vct g)\|\leq\lambda\big\} .
\end{align*}
Using a simple union bound trick, we can get rid of the~$\gamma$: 
\begin{align*}
   \Prob\big\{\nres{G}{C}{D}\geq\lambda\big\} = \Prob\big\{\nres{G}{C}{D}+\gamma-\gamma\geq\lambda\big\} & \leq \Prob\big\{\nres{G}{C}{D}+\gamma\geq\lambda\big\} + \Prob\big\{\nres{G}{C}{D}-\gamma\geq\lambda\big\}
\\ & = 2\Prob\big\{\nres{G}{C}{D}+\gamma\geq\lambda\big\} ,
\end{align*}
and similarly for $\sres{G}{C}{D}$. The resulting bounds are
\begin{align}
   \Prob\big\{\nres{G}{C}{D}\geq \lambda\big\} & \leq 2\Prob\big\{ \|\Proj_D(\vct g')\| + \|\Proj_C(\vct g)\|\geq\lambda\big\} ,
\label{eq:bounds-step_fct-nres}
\\ \Prob\big\{\sres{G}{C}{D}\leq \lambda\big\} & \leq 2\Prob\big\{ \|\Proj_D(\vct g')\| - \|\Proj_C(\vct g)\|\leq\lambda\big\} .
\label{eq:bounds-step_fct-sres}
\end{align}
We refer to~\cite[Appendix C]{OTH:13} for a similar approach.

The bounds in~\eqref{eq:bounds-conc} as well as the bounds~\eqref{eq:bounds-step_fct-nres} and~\eqref{eq:bounds-step_fct-sres} are not quite ``ready'', as the bounds in~\eqref{eq:bounds-conc} rely on the expectations~$\mu_1$ and~$\mu_2$, and the bounds~\eqref{eq:bounds-step_fct-nres} and~\eqref{eq:bounds-step_fct-sres} rely on the distribution of $\|\Proj_C(\vct g)\|$ and $\|\Proj_D(\vct g')\|$. The bounds in~\eqref{eq:bounds-conc} are made ready by using the inequalities~\eqref{eq:moments-restr-singvals-claim-norm} and~\eqref{eq:moments-restr-singvals-claim-sval} with $f(t)=t$,
\begin{align*}
   \mu_1 & \leq \sqrt{\sdimw(D)}+\sqrt{\sdimw(C)} , & \mu_2 & \geq \sqrt{\sdimw(D)}-\sqrt{\sdimw(C)} ,
\end{align*}
which results in
\begin{align}
   \Prob\big\{\nres{G}{C}{D}\geq \lambda\big\} & \leq \exp\Bigg(-\frac{\max\big\{0,\lambda-\sqrt{\sdimw(D)}-\sqrt{\sdimw(C)}\big\}^2}{2}\Bigg) ,
\label{eq:bounds-conc-sdimw-nres}
\\ \Prob\big\{\sres{G}{C}{D}\leq \lambda\big\} & \leq \exp\Bigg(-\frac{\max\big\{0,\sqrt{\sdimw(D)}-\sqrt{\sdimw(C)}-\lambda\big\}^2}{2}\Bigg) .
\label{eq:bounds-conc-sdimw-sres}
\end{align}

As for the bounds~\eqref{eq:bounds-step_fct-nres} and~\eqref{eq:bounds-step_fct-sres}, we use the generalized Steiner formula~\eqref{eq:steiner-mike} to express the bounds in terms of the intrinsic volumes, cp.~Remarks~\ref{rem:normprojC-h_K}/\ref{rem:rhs-intrvol-sval}:
\begin{align}
   \Prob\big\{ \|\Proj_D(\vct g')\| + \|\Proj_C(\vct g)\|\geq\lambda\big\} & = \sum_{i=0}^m \sum_{j=0}^n v_i(C) \, v_j(D) \, \Prob\big\{ \chi_j' + \chi_i \geq\lambda\big\} ,
\label{eq:rhs-intrvol-nres}
\\ \Prob\big\{ \|\Proj_D(\vct g')\| - \|\Proj_C(\vct g)\|\geq\lambda\big\} & = \sum_{i=0}^m \sum_{j=0}^n v_i(C) \, v_j(D) \, \Prob\big\{ \chi_j' - \chi_i \geq\lambda\big\} ,
\label{eq:rhs-intrvol-sres}
\end{align}
where $\chi_0=\chi_0'=0$ and $\chi_1,\ldots,\chi_m,\chi_1',\ldots,\chi_n'$ denote independent chi-distributed random variables with $\chi_i$ and $\chi_j'$ having~$i$ and~$j$ degrees of freedom, respectively.

Numerical experiments suggest that the bounds in~\eqref{eq:bounds-step_fct-nres} and~\eqref{eq:bounds-step_fct-sres} are stronger than the bounds in~\eqref{eq:bounds-conc-sdimw-nres} and~\eqref{eq:bounds-conc-sdimw-sres}; especially for~$\sres{G}{C}{D}$ and~$\lambda$ close to zero, which is the interesting regime in the study of Renegar condition number, cf.~also Section~\ref{sec:tube-forms} below. Instead of having a full discussion about the pros and cons of one bound over the other, which would go beyond the scope of this section, we content ourselves with an example to illustrate the differences.

\begin{example}
We consider cones such that $\sdim(C)=20$ and $\sdim(D)=50$. For this we take linear spaces and Lorentz cones, i.e., circular cones of radius~$\frac{\pi}{4}$. More precisely, for~$C$ we take~$\IR^{20}$ and $\Circ_{40}(\frac{\pi}{4})$, and for~$D$ we take~$\IR^{50}$ and $\Circ_{100}(\frac{\pi}{4})$. The approximate statistical dimensions of these cones are given by
\begin{align*}
   \sdimw(\IR^{20}) & \approx 19.51 , & \sdimw\big(\Circ_{40}\big(\tfrac{\pi}{4}\big)\big) & \approx 19.25 , & \sdimw(\IR^{50}) & \approx 49.50 , & \sdimw\big(\Circ_{100}\big(\tfrac{\pi}{4}\big)\big) & \approx 49.25 .
\end{align*}
Furthermore, for $C=\Circ_{40}(\frac{\pi}{4})$ and $D=\Circ_{100}(\frac{\pi}{4})$ we obtain from~\eqref{eq:prob{sres(G)=0}},
\begin{align*}
   \Prob\big\{\sres{G_1}{\IR^{20}}{\IR^{50}}=0\big\} & = 0 , & \Prob\big\{\sres{G_2}{C}{\IR^{50}}=0\big\} & = 0 ,
\\ \Prob\big\{\sres{G_3}{\IR^{20}}{D}=0\big\} & \approx 5\cdot 10^{-6} , & \Prob\big\{\sres{G_4}{C}{D}=0\big\} & \approx 10^{-4}
\end{align*}
where $\vct G_1\in\IR^{50\times20}$, $\vct G_2\in\IR^{50\times40}$, $\vct G_3\in\IR^{100\times20}$, $\vct G_4\in\IR^{100\times40}$ Gaussian matrices.
To compare the bounds \eqref{eq:bounds-step_fct-nres}--\eqref{eq:bounds-conc-sdimw-sres} we define the functions
\begin{align*}
   f(\lambda) & := \exp\Bigg(-\frac{\max\big\{0,\sqrt{50}-\sqrt{20}-\lambda\big\}^2}{2}\Bigg) , & f(C,D;\lambda) & := \min\Big\{1 , 2\, \sum_{i,j} v_i(C) v_j(D) \Prob\big\{ \chi_j - \chi_i' < \lambda\big\} \Big\} ,
\\ g(\lambda) & := \exp\Bigg(-\frac{\max\big\{0,\lambda-\sqrt{50}-\sqrt{20}\big\}^2}{2}\Bigg) , & g(C,D;\lambda) & := \min\Big\{1 , 2\, \sum_{i,j} v_i(C) v_j(D) \Prob\big\{ \chi_j + \chi_i' > \lambda\big\} \Big\} .
\end{align*}
See Figure~\ref{fig:norm-sval-bounds} for the resulting plots with $C\in\big\{\IR^{20},\Circ_{40}\big(\frac{\pi}{4}\big)\big\}$ and $D\in\big\{\IR^{50},\Circ_{100}\big(\frac{\pi}{4}\big)\big\}$.
\begin{figure}[!ht]
   \begin{center}
   \def\myXsc{1}
   \def\myYsc{4}
   \def\myEpsy{0.07}
   \def\myEpsx{\myEpsy*\myXsc/\myYsc}

   \subfloat[singular value (cdf)]{\begin{tikzpicture}[xscale=\myXsc, yscale=\myYsc,scale=1.1, >=stealth]
      \draw (0,0) -- (6,0) (0,0) -- (0,1);
      \foreach \y in {0,.1,.2,.3,.4,.5,.6,.7,.8,.9,1}
        \draw (0,\y) -- ++(-\myEpsy,0) node[left=-1mm]{$\scriptscriptstyle\y$};
      \foreach \x in {0,1,...,6}
        \draw (\x,0) -- ++(0,-\myEpsx) node[below=-1mm]{$\scriptscriptstyle\x$};
      \draw[color=blue,dotted] plot file {tables/sval_m=20_n=50_mean_empirical.table};
      \draw[color=blue, dashed] plot file {tables/sval_m=20_n=50_mean_estimated.table};
      \draw[color=black] plot file {tables/sval_m=20_n=50_cdf_empirical.table};
      \draw[color=magenta] plot file {tables/sval_m=20_n=50_bound_affbdl.table};
      \draw[color=red] plot file {tables/sval_sdim=20_n=50_bound_affbdl.table};
      \draw[color=green] plot file {tables/sval_m=20_sdim=50_bound_affbdl.table};
      \draw[color=cyan] plot file {tables/sval_sdim=20_sdim=50_bound_affbdl.table};
      \draw[color=black] plot file {tables/sval_m=20_n=50_bound_conc.table};
      \draw[<-] (2.6,0.5) -- (3.22,0.5) node[right=-1mm]{$\scriptstyle \sqrt{50}-\sqrt{20}$};
      \draw[<-] (2.79,0.35)  -- (3.22,0.35) node[right=-1mm]{$\scriptstyle \Prob\{\sigma(\vct G)<\lambda\}$};
      \draw[<-] (2.9,0.2) -- (3.22,0.2) node[right=-1mm]{$\scriptstyle \Expect[\sigma(\vct G)]$};
      \path (1.5,0.9) node[left=-1mm]{$\scriptstyle f(\lambda)$};
      \path (1.5,0.8) node[left=-1mm]{$\scriptstyle f(C,D;\lambda)$};
      \path (1.5,0.7) node[left=-1mm]{$\scriptstyle f(\IR^{20},D;\lambda)$};
      \path (1.5,0.6) node[left=-1mm]{$\scriptstyle f(C,\IR^{50};\lambda)$};
      \path (1.5,0.5) node[left=-1mm]{$\scriptstyle f(\IR^{20},\IR^{50};\lambda)$};
      \draw[->] (1.5,0.9) -- (1.972,0.82);
      \draw[->,cyan] (1.5,0.8) -- (2.12,0.695);
      \draw[->,green] (1.5,0.7) -- (2.01,0.615);
      \draw[->,red] (1.5,0.6) -- (1.924,0.536);
      \draw[->,magenta] (1.5,0.5) -- (1.864,0.44);
   \end{tikzpicture}}
   \hfill
   \subfloat[norm (tail)]{\begin{tikzpicture}[xscale=\myXsc, yscale=\myYsc,scale=1.1, >=stealth]
      \draw (9,0) -- (15,0) (9,0) -- (9,1);
      \foreach \y in {0,.1,.2,.3,.4,.5,.6,.7,.8,.9,1}
        \draw (9,\y) -- ++(-\myEpsy,0) node[left=-1mm]{$\scriptscriptstyle\y$};
      \foreach \x in {9,10,...,15}
        \draw (\x,0) -- ++(0,-\myEpsx) node[below=-1mm]{$\scriptscriptstyle\x$};
      \draw[color=black] plot file {tables/norm_m=20_n=50_tail_empirical.table};
      \draw[color=blue,dotted] plot file {tables/norm_m=20_n=50_mean_empirical.table};
      \draw[color=blue,dashed] plot file {tables/norm_m=20_n=50_mean_estimated.table};
      \draw[color=magenta] plot file {tables/norm_m=20_n=50_bound_affbdl.table};
      \draw[color=green] plot file {tables/norm_m=20_sdim=50_bound_affbdl.table};
      \draw[color=red] plot file {tables/norm_sdim=20_n=50_bound_affbdl.table};
      \draw[color=cyan] plot file {tables/norm_sdim=20_sdim=50_bound_affbdl.table};
      \draw[color=black] plot file {tables/norm_m=20_n=50_bound_conc.table};
      \draw[->] (10.7,0.5) node[left=-1mm]{$\scriptstyle \sqrt{50}+\sqrt{20}$} -- (11.54,0.5);
      \draw[->] (10.7,0.35) node[left=-1mm]{$\scriptstyle \Prob\{\|\vct G\|>\lambda\}$} -- (11.16,0.35);
      \draw[->] (10.7,0.2) node[left=-1mm]{$\scriptstyle \Expect[\|\vct G\|]$} -- (11.02,0.2);
      \path (13.5,0.65) node[right=-1mm]{$\scriptstyle g(\lambda)$};
      \path (13.5,0.55) node[right=-1mm]{$\scriptstyle g(C,D;\lambda)$};
      \path (13.5,0.45) node[right=-1mm]{$\scriptstyle g(\IR^{20},D;\lambda)$};
      \path (13.5,0.35) node[right=-1mm]{$\scriptstyle g(C,\IR^{50};\lambda)$};
      \path (13.5,0.25) node[right=-1mm]{$\scriptstyle g(\IR^{20},\IR^{50};\lambda)$};
      \draw[->] (13.5,0.65) -- (12.7,0.51);
      \draw[->,cyan] (13.5,0.55) -- (12.475,0.372);
      \draw[->,green] (13.5,0.45) -- (12.620,0.289);
      \draw[->,red] (13.5,0.35) -- (12.8,0.22);
      \draw[->,magenta] (13.5,0.25) -- (12.956,0.136);
   \end{tikzpicture}}
   \end{center}
   \caption{Bounds for the distributions of the restricted norm and the restricted singular value; $C:=\Circ_{40}\big(\frac{\pi}{4}\big)$ and $D:=\Circ_{100}\big(\frac{\pi}{4}\big)$, $\vct G\in\IR^{20\times 50}$ Gaussian.}
   \label{fig:norm-sval-bounds}
\end{figure}
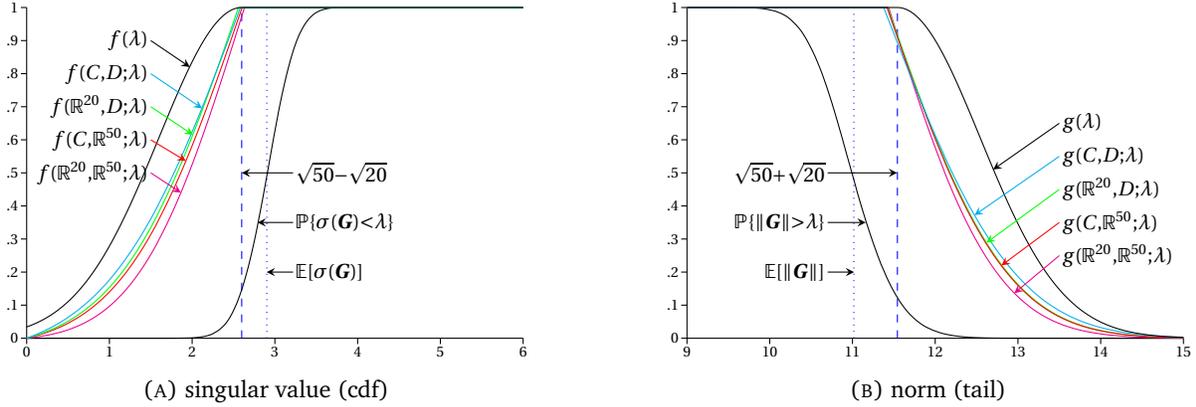
\end{example}

\subsection{Bounds from tube formulas}\label{sec:tube-forms}

We finally return to the other line of research, introduced in Section~\ref{sec:2-viewpoints}, which stems from the analysis of condition numbers, and which lays its focus on the tubular neighborhood of a set of ill-posed inputs. Specifically, we consider the (generalized) Renegar condition, cf.~Section~\ref{sec:gen-feas-prob}, which is given by
  \[ \RCD{A}{C}{D}=\frac{\|\vct A\|}{\dist(\vct A,\Sigma(C,D))} ,\qquad \dist(\vct A,\Sigma(C,D)) = \max\big\{\sres{A}{C}{D},\srestm{A}{D}{C}\big\} , \]
where, as usual $C\subseteq\IR^m$, $D\subseteq\IR^n$ are closed convex cones, $\vct A\in\IR^{n\times m}$. Interesting parameters for convex optimization are, for example, $C=\IR_+^m$, $D=\IR^n$, $n\leq m$, which corresponds to classical linear programming. Choosing for~$C$ the cone of nonnegative semidefinite matrices yields semidefinite programming.

The condition number $\RCD{A}{C}{D}$ measures the computational hardness of the input~$\vct A$ (with respect to the cones~$C$ and~$D$), so the goal is to derive upper bounds for, say, $\Prob\{\RCD{G}{C}{D}\geq t\}$ where $\vct G\in\IR^{n\times m}$ Gaussian. Here, the role of the norm is of secondary importance, so that the main interest lies in the cdf of the distance to ill-posedness,
  \[ \Prob\big\{ \dist(\vct G,\Sigma(C,D)) \leq \tfrac{1}{t}\big\} . \]
In order to get this into a more familiar form, recall that $\P(C,D)\cup\D(C,D)=\IR^{n\times m}$ unless $C=D=\IR^m$. In particular, if $(C,D)\neq(\IR^m,\IR^m)$ then
$\Prob\big\{ \sres{G}{C}{D}\leq \frac{1}{t} \text{ or } \srestm{G}{D}{C}\leq \frac{1}{t}\big\} = 1$, and thus
\begin{align*}
   \Prob\{ \dist(\vct G,\Sigma(C,D))\leq\lambda\} & = \Prob\{ \sres{G}{C}{D}\leq \lambda \text{ and } \srestm{G}{D}{C}\leq \lambda\}
\\ & = \Prob\{ \sres{G}{C}{D}\leq \lambda\} + \Prob\{ \srestm{G}{D}{C}\leq \lambda\} - 1 .
\end{align*}
Assuming $\sdim(C)<\sdim(D)$ (without loss of generality by symmetry) and using the asymptotics derived from~\eqref{eq:prob{sres(G)=0}}, we see that one is basically interested in bounds for the cdf of the restricted singular value~$\Prob\{\sres{G}{C}{D}\leq\lambda\}$ for small~$\lambda$.
The bounds obtained from Gordon's comparison theorem are not strong enough to even show that $\Expect\big[\log \RCD{G}{C}{D}\big] < \infty$. 
In fact, applying the bounds~\eqref{eq:bounds-step_fct-nres}/\eqref{eq:bounds-step_fct-sres} to the above expression for the distance to ill-posedness yields a right-hand side that fails to converge to zero as $\lambda\to 0$.

On the other hand, the bounds obtained from tube formulas show that the expectation of the logarithm of the condition number is not only finite but polynomial in the dimensions. To be more precise, for $n\leq m$ let $\St_{n,m}:=\{\vct U\in\IR^{n\times m}\mid \vct U\vct U^T=\Id_n\}$ denote the Stiefel manifold, which is compact and admits a unique orthogonal invariant probability measure. If $\vct U\in\St_{n,m}$ is uniformly at random, then in~\cite{AB:13} a bound of the form
\begin{align*}
   \Prob\{\Ren_C(\vct U)\geq t\} & = \Prob\big\{\sres{U}{C}{\IR^n}\leq \tfrac{1}{t}\big\} \leq \sum_k v_k(C)\, f_{m,n,k}(t)
\intertext{was given, where $f_{n,m,k}(t)\to0$ for $t\to\infty$ (and estimated to derive polynomial bounds for the expected logarithm of the condition). In fact, it can be shown~\cite{A:14} that for the general Renegar condition one obtains}
   \Prob\{\Ren_{C,D}(\vct U)\geq t\} & = \Prob\big\{\sres{U}{C}{D}\leq \tfrac{1}{t}\big\} \leq \sum_{k,\ell} v_k(C)\, v_\ell(D)\, f_{m,n,k,\ell}(t) ,
\end{align*}
with similar coefficient functions satisfying $f_{m,n,k,n}(t)=f_{m,n,k}(t)$. Estimating these expressions is a nontrivial matter on its own, but maybe the framework provided in this paper can help with this task.

\section{Conclusion}

In this paper we have provided a unifying framework to establish and clarify connections between random matrix theory, integral geometry, and convex optimization. We hope that these connections will be developed further and encourage people from each area to adapt tools and techniques from the other. We finish with some open questions that we find particularly attractive to deserve further study.

{\bf Asymptotics.} 
In the unrestricted case it is known, cf.~Section~\ref{sec:condnumbs-randommatr}, that the estimates for the largest and the smallest singular values, which are derived from Slepian's and Gordon's inequalities, are asymptotically exact. Is this also the case for the restricted versions? More precisely, let $(C_m)$, $(D_n)$ be sequences of closed convex cones, $C_m\subseteq\IR^m$, $D_n\subseteq\IR^n$, such that the relative statistical dimensions converge to $c,d\in[0,1]$, respectively, $\lim_{m\to\infty}\frac{\sdim(C_m)}{m}=c$ and $\lim_{n\to\infty}\frac{\sdim(D_n)}{n}=d$. For a Gaussian $(n\times m)$-matrix, $n>m$, do we have asymptotically (with $\frac{m}{n}$ converging to a limit in~$(0,1)$) $\nres{G}{C}{D}\sim\sqrt{d n}+\sqrt{c m}$ and $\sres{G}{C}{D}\sim\sqrt{d n}-\sqrt{c m}$? Moreover, for a square Gaussian $(m\times m)$-matrix $\mtx{G}$, do we have the asymptotics $\nres{G}{C}{C}\sim\sqrt{c m}$ and $\sres{G}{C}{C}\sim\frac{1}{\sqrt{c m}}$?

If these questions can be answered in the affirmative, can they as well be lifted to general subgaussian distributions?\footnote{It seems that at least the bounds derived from Slepian's and Gordon's inequalities can be extended to the subgaussian case rather straightforwardly~\cite{Tr:14}.}

{\bf Renegar's condition number.} In Section~\ref{sec:comp-bounds} we have seen that Gordon's inequality, applied to the restricted singular values in the standard ways described, does not 
yield bounds on the distribution of Renegar's condition number (or even for the rectangular matrix condition number) that are good enough to recreate the bounds as obtained, for example, 
in~\cite{CD:05} or in~\cite{AB:13} in terms of tube formulas and integral geometry. Is it possible to formulate a contraction map for a convex bundle underlying the distance to ill-posedness, rather than
just the individual restricted singular values, in a way that would lead to bounds
\begin{equation*}
 \Prob\{\dist(\mtx{G},\Sigma)\leq \lambda\} \leq f(\lambda)
\end{equation*}
such that $f(\lambda)\to 0$ as $\lambda \to 0$, using the methods of Gaussian comparison inequalities?

{\bf Kinematic formula for random maps.} The generalized Steiner formula~\eqref{eq:steiner-mike} in the case of polyhedral cones~$C$ is based on a decomposition of~$\R^m$ induced by the facial
decomposition of~$C$ and its
polar~$C^\polar$. In a similar way, a map~$\mtx{A}_{C\to D}$ between polyhedral cones induces a decomposition of~$\R^{n\times m}$. Is it possible to develop a ``Steiner formula'' that would express the restricted
norms and singular values of Gaussian matrices in terms of ``matricial intrinsic volumes'', and would contain the Steiner formula as a special case for $\norm{\Proj_C(\vct{g})}$? Answering such a question could allow to transfer Edelman's exact formulas~\cite{Edelman88} to the cone-restricted case.

\bibliographystyle{myalpha}
\bibliography{statdim}

\appendix

\section{The biconic feasibility problem - proofs}\label{sec:appendix-convex}

In this appendix we 
provide the proofs for Section~\ref{sec:gen-feas-prob}.
Recall that for $C\subseteq\IR^m$, $D\subseteq\IR^n$ closed convex cones, the biconic feasibility problem is given by
\\\def\tmpX{3mm}
\begin{minipage}{0.46\textwidth}
\begin{align}
   \exists \vct x & \in C\setminus\{\vct0\} \quad\text{s.t.} \hspace{\tmpX} \vct{Ax} \in D^\polar ,
\tag{P}
\label{eq:(P1)}
\end{align}
\end{minipage}
\rule{0.07\textwidth}{0mm}
\begin{minipage}{0.46\textwidth}
\begin{align}
   \exists \vct y & \in D\setminus\{\vct0\} \quad\text{s.t.} \hspace{\tmpX} -\vct A^T\vct y\in C^\polar ,
\tag{D}
\label{eq:(D1)}
\end{align}
\end{minipage}
\\[2mm] and the sets of primal feasible and dual feasible instances can be characterized by
\begin{align*}
   \P(C,D) & = \big\{\vct A\in\IR^{n\times m}\mid C\cap \big(\vct A^T D\big)^\polar\neq\{\vct0\}\big\} = \{\vct A\in\IR^{n\times m}\mid \sres{A}{C}{D}=0 \} ,
\\ \D(C,D) & = \{\vct A\in\IR^{n\times m}\mid D\cap (-\vct A C)^\polar\neq\{\vct0\}\} =  \{\vct A\in\IR^{n\times m}\mid \srestm{A}{D}{C}=0 \} ,
\end{align*}
respectively, cf.~\eqref{eq:P(C,D)}/\eqref{eq:D(C,D)}. The proof of Proposition~\ref{prop:primaldual} uses the following generalization of Farkas' Lemma.

\begin{lemma}\label{lem:farkas-gen}
Let $C,\tilde C\subseteq\IR^m$ be closed convex cones with $\inter(C)\neq\emptyset$. Then
\begin{equation}\label{eq:Farkas-gen}
  \inter(C)\cap \tilde C=\emptyset \iff C^\polar\cap (-\tilde C^\polar)\neq\{\vct0\} .
\end{equation}
\end{lemma}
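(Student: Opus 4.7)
My plan is to prove both directions by elementary convex analysis, with the separation theorem doing the only nontrivial work. Let $\vct v\in C^\polar\cap(-\tilde C^\polar)\setminus\{\vct 0\}$ be given; I will argue the easy direction $(\Leftarrow)$ first. By definition $\langle\vct v,\vct x\rangle\leq0$ for all $\vct x\in C$ and $\langle\vct v,\vct y\rangle\geq 0$ for all $\vct y\in\tilde C$. Assuming for contradiction that there is some $\vct x_0\in\inter(C)\cap\tilde C$, both inequalities force $\langle\vct v,\vct x_0\rangle=0$. Since $\vct x_0\in\inter(C)$, there is $\eps>0$ such that $\vct x_0+\eps\vct v\in C$, whence $\langle\vct v,\vct x_0+\eps\vct v\rangle=\eps\|\vct v\|^2>0$, contradicting $\vct v\in C^\polar$.

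For the direction $(\Rightarrow)$, the idea is to separate $\inter(C)$ from $\tilde C$ by a hyperplane and then convert the separating functional into the desired element of $C^\polar\cap(-\tilde C^\polar)$. Since $\inter(C)$ is nonempty, open, and convex, and $\tilde C$ is convex and disjoint from it, the Hahn--Banach separation theorem produces a nonzero $\vct v\in\IR^m$ and $\alpha\in\IR$ with
\begin{equation*}
  \langle\vct v,\vct x\rangle\,\leq\,\alpha\,\leq\,\langle\vct v,\vct y\rangle\qquad\text{for all } \vct x\in\inter(C),\ \vct y\in\tilde C.
\end{equation*}
Using that $C$ and $\tilde C$ are cones (so $\lambda\vct x\in\inter(C)$ for all $\lambda>0$ when $\vct x\in\inter(C)$, and similarly for $\tilde C$), the inequality $\langle\vct v,\lambda\vct x\rangle\leq\alpha$ as $\lambda\to\infty$ forces $\langle\vct v,\vct x\rangle\leq0$, and taking the closure gives $\vct v\in C^\polar$. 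Symmetrically, $\langle\vct v,\lambda\vct y\rangle\geq\alpha$ as $\lambda\to\infty$ forces $\langle\vct v,\vct y\rangle\geq0$ for $\vct y\in\tilde C$, so $-\vct v\in\tilde C^\polar$. Hence $\vct v\in C^\polar\cap(-\tilde C^\polar)\setminus\{\vct 0\}$.

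The only delicate point is invoking separation at the right level of generality: I rely on the openness of $\inter(C)$ (which needs the assumption $\inter(C)\neq\emptyset$) to avoid the usual hypothesis of relative interiors intersecting trivially, and on the absence of any hidden nontriviality by observing that the separating functional can be taken nonzero automatically when one of the sets is open. Neither $C$ nor $\tilde C$ need be polyhedral. I expect no real obstacle beyond a clean statement of the separation argument.
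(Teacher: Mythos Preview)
Your proof is correct and follows essentially the same approach as the paper: separation of the disjoint convex sets $\inter(C)$ and $\tilde C$ for $(\Rightarrow)$, and the strict inequality $\langle\vct v,\vct x_0\rangle<0$ for nonzero $\vct v\in C^\polar$ and $\vct x_0\in\inter(C)$ for $(\Leftarrow)$. Your version is in fact more explicit than the paper's, spelling out the scaling argument that reduces the separating constant $\alpha$ to zero and using the perturbation $\vct x_0+\eps\vct v$ to derive the strictness, whereas the paper simply asserts the half-space containment.
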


\begin{proof}
If $\inter(C)\cap \tilde C=\emptyset$, then there exists a separating hyperplane $H=\vct v^\bot$, $\vct v\neq\vct0$, so that $\langle \vct v,\vct x\rangle \leq 0$ for all $\vct x\in C$ and $\langle \vct v,\vct y\rangle \geq 0$ for all $\vct y\in \tilde C$. 
But this means $\vct v\in C^\polar\cap (-\tilde C^\polar)$.
On the other hand, if $\vct x\in \inter(C)\cap \tilde C$ then only in the case $C=\IR^m$, for which the claim is trivial, can $\vct x=\vct0$. If $\vct x\neq\vct0$, then~$C^\polar\setminus\{\vct0\}$ lies in the open half-space~$\{\vct v\mid \langle \vct v,\vct x\rangle<0\}$ and $-\tilde C^\polar$ lies in the closed half-space $\{\vct v\mid \langle \vct v,\vct x\rangle\geq0\}$, and thus $C^\polar\cap(-\tilde C^\polar)=\{\vct0\}$.
\end{proof}

For the proof of the third claim in Proposition~\ref{prop:primaldual} we also need the following well-known convex geometric lemma; a proof can be found, for example, in~\cite[proof of Thm.~6.5.6]{SW:08}. We say that two cones $C,D\subseteq\IR^m$, with $\inter(C)\neq\emptyset$, \emph{touch} if $C\cap D\neq\{\vct0\}$ but $\inter(C)\cap D=\emptyset$.

\begin{lemma}\label{lem:touch}
Let $C,D\subseteq\IR^m$ closed convex cones with $\inter(C)\neq\emptyset$. If $\vct Q\in O(m)$ uniformly at random, then the randomly rotated cone $\vct QD$ almost surely does not touch~$C$.
\end{lemma}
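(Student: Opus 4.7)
My plan is to interpret ``touching'' as a codimension-one failure of transversality, and show that the set of touching rotations has Haar measure zero in $O(m)$ via a perturbation argument.

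First, I would use Lemma~\ref{lem:farkas-gen} to reformulate the touching condition. Applied with $\tilde C=\vct QD$, the condition $\inter(C)\cap \vct QD=\emptyset$ is equivalent to $C^\polar\cap(-\vct QD^\polar)\neq\{\vct 0\}$. Hence $\vct Q$ produces a touching configuration if and only if there exist nonzero $\vct x\in C\cap\vct QD$ and nonzero $\vct v\in C^\polar\cap(-\vct QD^\polar)$. The two inequalities $\langle \vct v,\vct x\rangle\leq 0$ and $\langle \vct v,\vct x\rangle\geq 0$ force $\vct v\perp\vct x$, so $\vct x$ is a boundary point of $C$ at which $\vct v$ is an exposed outer normal, while simultaneously $\vct x$ is a boundary point of $\vct QD$ with exposed outer normal $-\vct v$.

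Next, I would observe that the two sets
\begin{align*}
   A & = \{\vct Q\in O(m)\mid \inter(C)\cap \vct QD\neq\emptyset\}, &
   B & = \{\vct Q\in O(m)\mid C\cap \vct QD=\{\vct 0\}\}
\end{align*}
are both open in $O(m)$: interior overlap survives small perturbations, and disjointness of the compact spherical sections $C\cap S^{m-1}$ and $\vct QD\cap S^{m-1}$ is also preserved under small perturbations. The touching set $T$ is then the closed complement $O(m)\setminus(A\cup B)$.

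To show $T$ has Haar measure zero, I would use the following local argument. For each $\vct Q_0\in T$ with touching pair $(\vct x_0,\vct v_0)$, consider the family $\vct Q_t = R_t\vct Q_0$, where $R_t$ is the rotation by angle $t$ in the plane $\lspan(\vct x_0,\vct v_0)$. For small $t>0$ in the direction from $\vct x_0$ toward $-\vct v_0$, the vector $R_t\vct x_0$ lies in $\inter(C)$ (since $\vct v_0$ is an exposed outer normal of $C$ at $\vct x_0$), placing $\vct Q_t\in A$. This exhibits $T$ locally as the boundary of $A$ along a codimension-one direction. Combining this with a stratification of $\partial C$ and $\partial D$ by exposed face type and integrating on $O(m)$ via Fubini then shows $T$ is contained in a countable union of submanifolds of positive codimension.

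The main obstacle is cleanly handling general (non-polyhedral) cones, where the boundary stratification is not automatically smooth or finite. In the polyhedral case a direct dimension count on pairs of exposed faces suffices. In the general case, one either approximates by polyhedral cones and uses Hausdorff continuity of the touching event, or mimics the classical proof of the analogous statement for Euclidean convex bodies in~\cite[Thm.~6.5.6]{SW:08}, which invokes the coarea formula together with regularity properties of the support functions of $C$ and $D$.
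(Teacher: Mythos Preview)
The paper does not actually prove this lemma: it simply cites~\cite[proof of Thm.~6.5.6]{SW:08} and moves on. Your proposal ultimately lands in the same place, so at the level of ``what argument is actually being invoked'' you and the paper agree.

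That said, the concrete perturbation you sketch before appealing to~\cite{SW:08} has a gap. Your claim that $R_t\vct x_0\in\inter(C)$ for small $t>0$, because $\vct v_0$ is an exposed outer normal of $C$ at $\vct x_0$, is false in general. Take $C=\IR_+^3$, $\vct x_0=(1,0,0)$, and $\vct v_0=(0,-1,0)\in C^\polar$; then $\vct x_0-t\vct v_0=(1,t,0)$ stays on $\partial C$ for all $t$. The problem is that $\vct x_0$ may lie on a face of $C$ of codimension larger than one, and pushing inward along a single normal does not reach the interior. This is not a pathology of non-polyhedral cones---the example is polyhedral---so your ``main obstacle'' paragraph misidentifies where the difficulty lies.

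Even if the one-parameter perturbation worked, it would only show that $T\subseteq\partial A$, and the boundary of an open set in $O(m)$ can have positive Haar measure. To get measure zero you genuinely need the stratification/dimension count you allude to, and carrying that out rigorously for arbitrary closed convex cones is essentially the content of the Schneider--Weil argument. Your sketch is therefore correct in spirit but does not stand on its own; the honest statement is that you, like the paper, are deferring to~\cite{SW:08}.
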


\begin{proof}[Proof of Proposition~\ref{prop:primaldual}]
(1) The sets $\P(C,D)$ and $\D(C,D)$ are closed as they are preimages of the closed set $\{0\}$ under continuous functions,
c.f.~\eqref{eq:P(C,D)}/\eqref{eq:D(C,D)}. Indeed, for any $\vct{x}$, the function $\mtx{A}\mapsto \norm{\Proj_D(\mtx{A}\vct{x})}$ is continuous, and as a minimum of such functions over the compact set $C\cap S^{m-1}$, it
follows that $\sigma_{C\to D}(\mtx{A})$ is continuous. Hence, $\P(C,D)=\{\vct A\in\IR^{n\times m}\mid \sres{A}{C}{D}=0 \}$ is closed.
The same argument applies to $\D(C,D)$.

(2) For the claim about the union of the sets $\P(C,D)$ and $\D(C,D)$ we first consider the case $C\neq\IR^m$, so that $\vct 0\not\in\inter(C)$. Using the generalized Farkas' Lemma~\ref{lem:farkas-gen}, we obtain
\begin{align*}
   \vct A\not\in\P(C,D) & \iff C\cap \big(\vct A^T D\big)^\polar = \{\vct0\} \;\Rightarrow\;  \inter(C)\cap \big(\vct A^T D\big)^\polar = \emptyset \stackrel{\eqref{eq:Farkas-gen}}{\Longrightarrow} C^\polar\cap (-\vct A^T D)\neq\{\vct0\} \;\Rightarrow\; \vct A\in\D(C,D) .
\end{align*}
This shows $\P(C,D)\cup\D(C,D)=\IR^{n\times m}$. For $D\neq\IR^n$ the argument is the same.
For $C=\IR^m$ and $D=\IR^n$:
\begin{align*}
   \P(\IR^m,\IR^n) & = \big\{\vct A\in\IR^{n\times m}\mid \ker\vct A\neq\{\vct0\}\big\} = \begin{cases} \{\text{rank deficient matrices}\} & \text{if } m\leq n \\ \IR^{n\times m} & \text{if } m>n , \end{cases}
\\ \D(\IR^m,\IR^n) & = \big\{\vct A\in\IR^{n\times m}\mid \ker\vct A^T\neq\{\vct0\}\big\} = \begin{cases} \IR^{n\times m} & \text{if } m<n \\ \{\text{rank deficient matrices}\} & \text{if } m\geq n . \end{cases}
\end{align*}
In particular,this shows $\P(\IR^m,\IR^m)\cup \D(\IR^m,\IR^m)=\{\text{rank deficient matrices}\}$.

(3) If $(C,D)=(\IR^m,\IR^n)$ then by the characterization above $\Sigma(\IR^m,\IR^n)$ consists of the rank deficient matrices, which is a nonempty set. If $(C,D)\neq(\IR^m,\IR^m)$, then the union of the closed sets $\P(C,D)$ and $\D(C,D)$ equals~$\IR^{n\times m}$, which is an irreducible topological space, so that their intersection $\Sigma(C,D)=\P(C,D)\cap\D(C,D)$ must be nonempty.

As for the claim about the Lebesgue measure of $\Sigma(C,D)$, we may use the symmetry between~\eqref{eq:(P1)} and~\eqref{eq:(D1)} to assume without loss of generality $n\leq m$. If $\vct A\in\IR^{n\times m}$ has full rank, then $\vct AC$ has nonempty interior and from Proposition~\ref{prop:nres=0,sres=0} and Farkas' Lemma,
\begin{align*}
   \sres{A}{C}{D}=0 & \iff C\cap (\vct A^TD)^\polar\neq\{\vct0\} \iff \vct AC\cap D^\polar\neq\{\vct0\} \;\text{or}\; \ker\vct A\cap C\neq\{\vct0\} ,
\\ \srestm{A}{D}{C}=0 & \iff D\cap (-\vct AC)^\polar\neq\{\vct0\} \stackrel{\eqref{eq:Farkas-gen}}{\iff} D^\polar\cap \inter(\vct AC)=\emptyset .
\end{align*}
Note that if $\vct{Ax}=\vct0$ for some $\vct x\in \inter(C)$, then~$\vct A$, being a continuous surjection, maps an open neighborhood of~$\vct x$ to an open neighborhood of the origin, so that~$\vct AC=\IR^n$. Hence, $D\cap (-\vct AC)^\polar\neq\{\vct0\}$ implies $\ker\vct A\cap \inter(C)=\emptyset$, since otherwise $\vct AC=\IR^n$, i.e., $(\vct AC)^\polar=\{\vct0\}$.

If $\vct A\in\Sigma(C,D)$, i.e., $\sres{A}{C}{D}=\srestm{A}{D}{C}=0$, and if $\vct A$ has full rank, then $\vct AC\cap D^\polar\neq\{\vct0\}$ implies that~$D^\polar$ touches~$\vct AC$, while $\ker\vct A\cap C\neq\{\vct0\}$ implies that~$\ker \vct A$ touches~$C$. Hence, if $\vct A=\vct G$ Gaussian, then $\vct G$ has almost surely full rank, and Lemma~\ref{lem:touch} implies that both touching events have zero probability, so that almost surely $\vct G\not\in\Sigma(C,D)$.
\end{proof}

We next provide the proof for the characterization of the restricted singular values as distances to the primal and dual feasible sets. From now on we use again the short-hand notation $\P:=\P(C,D)$ and $\D:=\D(C,D)$.

\begin{proof}[Proof of Proposition~\ref{prop:sing-dist}]
By symmetry, it suffices to show that $\dist(\vct A,\P)=\sres{A}{C}{D}$. If $\vct A\in\P$ then $\dist(\vct A,\P)=0=\sres{A}{C}{D}$, so assume that $\vct A\not\in\P$. Let $\DA\in\IR^{n\times m}$ such that $\vct A+\DA\in\P$ and $\dist(\vct A,\P)=\|\DA\|$. 
Since $\vct A+\DA\in\P$, there exists
$\vct x_0\in C\cap S^{m-1}$ such that $\vct w_0:=(\vct A+\DA)\vct x_0\in 
D^\polar$. For all $\vct y\in D$
  \[ 0\geq \langle\vct w_0,\vct y\rangle = \langle (\vct A+\DA)\vct x_0,\vct y\rangle = \langle \vct{Ax}_0,\vct y\rangle - \langle -\DA\vct x_0,\vct y\rangle . \]
If $\vct y_0\in B^n\cap D$ is such that $\|\Pi_D(\vct{Ax}_0)\|=\langle \vct{Ax}_0,\vct y_0\rangle$, then
\begin{align*}
   \dist(\vct A,\P) & = \|\DA\| \geq \|\DA\vct x_0\| 
   \geq \|\Pi_D(-\DA\vct x_0)\| = \max_{\vct y\in B^n\cap D}\langle -\DA\vct x_0,\vct y\rangle
\\ & \geq \langle -\DA\vct x_0,\vct y_0\rangle \geq \langle \vct{Ax}_0,\vct y_0\rangle = \|\Pi_D(\vct{Ax}_0)\| \geq \min_{\vct x\in C\cap S^{m-1}} \|\Pi_D(\vct{Ax})\| = \sres{A}{C}{D} .
\end{align*}

For the reverse inequality $\dist(\vct A,\P)\leq \sres{A}{C}{D}$ we need to construct a perturbation~$\DA$ such that $\vct A+\DA\in\P$ and $\|\DA\|\leq\sres{A}{C}{D}$. Let $\vct x_0\in C\cap S^{m-1}$ and $\vct y_0\in D\cap B^n$ such that
  \[ \sres{A}{C}{D} = \min_{\vct x\in C\cap S^{m-1}} \max_{\vct y\in D\cap B^n} \langle \vct{Ax},\vct y\rangle = \langle \vct{Ax}_0,\vct y_0\rangle . \]
Since $\vct A\not\in\P$ we have $\sres{A}{C}{D}>0$, which implies $\|\vct y_0\|=1$, i.e., $\vct y_0\in D\cap S^{n-1}$. We define
  \[ \DA := -\vct y_0\vct y_0^T\vct A . \]
Note that
  \[ \|\DA\| = \|\vct A^T\vct y_0\| \leq \langle \vct A^T\vct y_0,\vct x_0\rangle = \sres{A}{C}{D} . \]
Furthermore,
\begin{align*}
   (\vct A+\DA)\vct x_0 & = \vct{Ax}_0 - \vct y_0\vct y_0^T\vct{Ax}_0 = \vct{Ax}_0 - \langle \vct{Ax}_0,\vct y_0\rangle\vct y_0 = \vct{Ax}_0 - \Pi_D(\vct{Ax}_0) = \Pi_{D^\polar}(\vct{Ax}_0) .
\end{align*}
So $\vct x_0\in C\setminus\{\vct0\}$ and $(\vct A+\DA)\vct x_0\in D^\polar$, which shows that $\vct A+\DA\in\P$, and hence $\dist(\vct A,\P)\leq \|\DA\| \leq \sres{A}{C}{D}$.
\end{proof}

\section{A new variant of Gordon's comparison theorem}\label{sec:proof-comp-thm}

Underlying some of our analysis is a new variant of Gordon's comparison theorem. For completeness we first recall the familiar version of
Gordon's inequality~\cite{G:85}, see also~\cite{G:87} and~\cite[Chapter 8]{FR:13} for a simplified derivation.

\begin{theorem}[Gordon]\label{thm:gordon}
Let $X_{ij},Y_{ij}$, $1\leq i\leq m$, $1\leq j\leq n$, be centered Gaussian random variables, and assume that
\begin{align*}
   \Expect |X_{ij}-X_{k\ell}|^2 & \leq \Expect |Y_{ij}-Y_{k\ell}|^2 , & & \hspace{-2cm} \text{for all } i\neq k \text{ and } j,\ell ,
\\ \Expect |X_{ij}-X_{i\ell}|^2 & \geq \Expect |Y_{ij}-Y_{i\ell}|^2 , & & \hspace{-2cm} \text{for all } i,j,\ell .
\intertext{Then $\Expect \min_i\max_j X_{ij} \geq \Expect \min_i\max_j Y_{ij}$. If additionally}
   \Expect X_{ij}^2 & = \Expect Y_{ij}^2 , & & \hspace{-2cm} \text{for all } i,j ,
\end{align*}
then for any monotonically increasing function $f\colon\IR\to\IR$,
  \[ \Expect \min_i\max_j f(X_{ij}) \geq \Expect \min_i\max_j f(Y_{ij}) . \]
\end{theorem}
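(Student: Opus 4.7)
The proof proceeds by Gaussian interpolation along the classical Slepian--Gordon template. Since only the covariances of $X$ and $Y$ individually enter the hypothesis, assume without loss of generality that $X=(X_{ij})$ and $Y=(Y_{ij})$ are independent, and set $Z_{ij}(t) = \sqrt{t}\,X_{ij} + \sqrt{1-t}\,Y_{ij}$ for $t\in[0,1]$, so $Z(1)=X$ and $Z(0)=Y$. To carry out calculus, replace $\min_i\max_j$ by the smooth ``softmin-of-softmax'' surrogate
\[
F_{\alpha,\beta}(z) \;=\; -\tfrac{1}{\alpha}\log\sum_{i}\exp\!\Bigl(-\tfrac{\alpha}{\beta}\log\sum_j e^{\beta z_{ij}}\Bigr),
\]
which converges to $\min_i\max_j z_{ij}$ as $\alpha,\beta\to\infty$ and satisfies the translation identity $F_{\alpha,\beta}(z+c\mathbf{1})=F_{\alpha,\beta}(z)+c$; consequently $\sum_{a}\partial_a F_{\alpha,\beta}\equiv 1$ and $\sum_{b}\partial_a\partial_b F_{\alpha,\beta}\equiv 0$, where indices $a,b$ range over pairs $(i,j)$.

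Define $\phi(t):=\Expect F_{\alpha,\beta}(Z(t))$. Gaussian integration by parts applied to the jointly Gaussian pair $(\dot Z(t),Z(t))$, whose cross-covariances equal $\operatorname{Cov}(\dot Z_a,Z_b)=\tfrac12[\operatorname{Cov}(X_a,X_b)-\operatorname{Cov}(Y_a,Y_b)]$, yields
\[
\phi'(t) \;=\; \tfrac12 \sum_{a,b}\bigl[\operatorname{Cov}(X_a,X_b)-\operatorname{Cov}(Y_a,Y_b)\bigr]\,\Expect\bigl[\partial_a\partial_b F_{\alpha,\beta}(Z(t))\bigr].
\]
Polarizing via $\operatorname{Cov}(X_a,X_b)=\tfrac12(\Var X_a + \Var X_b - \Var(X_a-X_b))$ and invoking $\sum_b\partial_a\partial_b F_{\alpha,\beta}\equiv 0$, the contributions involving single variances drop out, leaving
\[
\phi'(t) \;=\; -\tfrac{1}{4}\sum_{a\neq b}\bigl[\Var(X_a-X_b)-\Var(Y_a-Y_b)\bigr]\,\Expect\bigl[\partial_a\partial_b F_{\alpha,\beta}(Z(t))\bigr].
\]
A chain-rule computation through the two nested log-sum-exp layers produces the sign pattern $\partial_{(i,j)}\partial_{(i,\ell)}F_{\alpha,\beta}\leq 0$ within a row ($j\neq\ell$), and $\partial_{(i,j)}\partial_{(k,\ell)}F_{\alpha,\beta}\geq 0$ across distinct rows ($i\neq k$). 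Pairing these signs against the two hypothesized variance inequalities makes every summand nonpositive, so $\phi'(t)\geq 0$; passing $\alpha,\beta\to\infty$ by dominated convergence (uniform integrability following from Gaussian supremum tail bounds) then yields the first assertion.

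For the second assertion, observe that $\min_i\max_j f(z_{ij})=f(\min_i\max_j z_{ij})$ whenever $f$ is nondecreasing, so it suffices to show $\Expect f(\min_i\max_j X_{ij})\geq \Expect f(\min_i\max_j Y_{ij})$. Under the additional equal-variance hypothesis, the diagonal terms in $\phi'(t)$ cancel directly without invoking translation invariance, and this extra freedom lets one upgrade the first-assertion argument to the stochastic-dominance statement $\min_i\max_j X_{ij} \geq_{\mathrm{st}} \min_i\max_j Y_{ij}$: apply the derivative argument to the smooth nondecreasing cut-off $z\mapsto \max(z,\lambda)$ composed with $F_{\alpha,\beta}$ to obtain $\Expect\max(\min_i\max_j X_{ij},\lambda)\geq \Expect\max(\min_i\max_j Y_{ij},\lambda)$ for each $\lambda$, and convert to tail inequalities via $\Expect\max(U,\lambda)=\lambda+\int_\lambda^\infty \Prob\{U>s\}\,ds$. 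The conclusion for arbitrary nondecreasing $f$ then follows since stochastic dominance is preserved under monotone maps.

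The main obstacle is the careful sign analysis of the second partials of $F_{\alpha,\beta}$: one must unwind the chain rule through the two nested log-sum-exp transformations and verify that the softmin and softmax layers produce exactly the sign pattern that matches the two variance-comparison hypotheses, a calculation that is elementary but requires bookkeeping. A secondary issue is the stochastic-dominance bootstrap in the second assertion, where the smooth cut-off composition must be handled with care so that the favourable sign structure survives the derivative calculation for non-polynomial $f$.
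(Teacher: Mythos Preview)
The paper does not actually prove Theorem~\ref{thm:gordon}; it is stated as a known result with references to Gordon~\cite{G:85,G:87} and to~\cite[Chapter~8]{FR:13}. So there is no paper proof to compare against, and I evaluate your argument on its own merits.

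Your treatment of the first assertion is correct and is exactly the standard interpolation proof: the translation identity $F_{\alpha,\beta}(z+c\mathbf{1})=F_{\alpha,\beta}(z)+c$ kills the single-variance terms after polarization, and the sign pattern of the mixed partials of the nested log-sum-exp is precisely as you state (within-row $\leq 0$, across-row $\geq 0$), so $\phi'(t)\geq 0$ follows.

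The second assertion, however, has two genuine gaps. First, composing a convex cut-off $g$ with $F_{\alpha,\beta}$ destroys the within-row sign: for $j\neq\ell$,
\[
\partial_{(i,j)}\partial_{(i,\ell)}(g\circ F_{\alpha,\beta}) \;=\; g''(F_{\alpha,\beta})\,\partial_{(i,j)}F_{\alpha,\beta}\,\partial_{(i,\ell)}F_{\alpha,\beta} \;+\; g'(F_{\alpha,\beta})\,\partial_{(i,j)}\partial_{(i,\ell)}F_{\alpha,\beta},
\]
and the first term is \emph{nonnegative} (both first partials equal $p_iq_{ij}\geq 0$ and $g''\geq 0$), so you cannot conclude the required nonpositivity. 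The ``extra freedom'' from equal variances only removes the need for translation invariance; it does nothing to restore the off-diagonal sign pattern under composition. Second, even if you had $\Expect\max(M_X,\lambda)\geq\Expect\max(M_Y,\lambda)$ for all $\lambda$ (with $M_X=\min_i\max_j X_{ij}$), this is only the \emph{increasing convex order}, i.e.\ $\int_\lambda^\infty \Prob\{M_X>s\}\,ds\geq\int_\lambda^\infty \Prob\{M_Y>s\}\,ds$; it does not imply the pointwise tail inequality $\Prob\{M_X>s\}\geq\Prob\{M_Y>s\}$, hence does not give stochastic dominance. (Take $M_Y\equiv 1$ and $M_X$ uniform on $[0,2]$ for a counterexample.)

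The standard route to the second assertion does not compose a scalar function with $F_{\alpha,\beta}$. One instead runs the interpolation directly on a smooth approximation of the indicator $\mathbf{1}\bigl[\bigcap_i\bigcup_j\{z_{ij}>\lambda\}\bigr]$, for instance
\[
G(z)\;=\;\prod_i\Bigl(1-\prod_j\bigl(1-h(z_{ij})\bigr)\Bigr)
\]
with $h$ a smooth nondecreasing $[0,1]$-valued approximation of $\mathbf{1}[\cdot>\lambda]$. A direct computation shows that this $G$ has the required Hessian sign pattern (within-row $\leq 0$, across-row $\geq 0$), and under equal variances the diagonal terms vanish, yielding the quadrant inequality $\Prob\{\min_i\max_j X_{ij}>\lambda\}\geq\Prob\{\min_i\max_j Y_{ij}>\lambda\}$ and hence the conclusion for arbitrary nondecreasing $f$.
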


Slepian's lemma is obtained by setting $m=1$ in Gordon's theorem. The following theorem (in the degenerate case of $X_0=Y_0=0$) lies somewhere in the middle between the two cases treated by Gordon's theorem.

\begin{theorem}\label{prop:gordon-with-0}
Let $X_0,Y_0,X_{ij},Y_{ij}$, $1\leq i\leq m$, $1\leq j\leq n$, be centered Gaussian random variables, and assume that
\begin{align*}
   \Expect |X_{ij}-X_{k\ell}|^2 & \leq \Expect |Y_{ij}-Y_{k\ell}|^2 , & & \hspace{-2cm} \text{for all } i\neq k \text{ and } j,\ell ,
\\ \Expect |X_{ij}-X_{i\ell}|^2 & \geq \Expect |Y_{ij}-Y_{i\ell}|^2 , & & \hspace{-2cm} \text{for all } i,j,\ell ,
\\ \Expect |X_{ij}-X_0|^2 & \geq \Expect |Y_{ij}-Y_0|^2 , & & \hspace{-2cm} \text{for all } i,j .
\end{align*}
Then for any monotonically increasing convex function $f\colon\IR_+\to\IR$,
\begin{equation}\label{eq:E[f(minmax(0,X_(ij)-X0))]>=...}
  \Expect \min_i\max_j f_+(X_{ij}-X_0) \geq \Expect \min_i\max_j f_+(Y_{ij}-Y_0) ,
\end{equation}
where $f_+(x):=f(x)$, if $x\geq0$, and $f_+(x):=f(0)$, if $x\leq0$.
\end{theorem}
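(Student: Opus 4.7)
My plan is to prove the theorem by Gaussian interpolation, using convexity of $f_+$ to absorb the slack coming from the relaxed ``diagonal'' hypothesis. Observe first that $f_+$ is itself monotonically increasing and convex on all of $\R$: it is constant on $(-\infty,0]$, equals $f$ on $[0,\infty)$, and at the origin its left and right derivatives satisfy $0 = f_+'(0^-) \leq f'(0^+) = f_+'(0^+)$. A standard mollification thus reduces the problem to the case where $\phi := f_+$ is smooth, strictly increasing, and convex. Substituting $U_{ij} := X_{ij} - X_0$ and $V_{ij} := Y_{ij} - Y_0$, the three hypotheses become (a) $\Expect U_{ij}^2 \geq \Expect V_{ij}^2$, (b) $\Expect(U_{ij}-U_{k\ell})^2 \leq \Expect(V_{ij}-V_{k\ell})^2$ for $i \neq k$, and (c) $\Expect(U_{ij}-U_{i\ell})^2 \geq \Expect(V_{ij}-V_{i\ell})^2$, and the target simplifies to $\Expect \min_i \max_j \phi(U_{ij}) \geq \Expect \min_i \max_j \phi(V_{ij})$.

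Taking $U$ and $V$ independent and smoothing the min-max via log-sum-exp to obtain a $C^\infty$ approximant $G_\beta$, I would set $\Psi(t) := \Expect G_\beta(Z(t))$ with $Z(t) := \sqrt{t}\, U + \sqrt{1-t}\, V$. Gaussian integration by parts yields
\[
\Psi'(t) \;=\; \tfrac{1}{2} \sum_{I, J} c_{IJ} \, \Expect\bigl[\partial_I \partial_J G_\beta(Z(t))\bigr], \qquad c_{IJ} := \Expect U_I U_J - \Expect V_I V_J ,
\]
and the algebraic identity $2 c_{IJ} = e_I + e_J - \tilde d_{IJ}$, with $e_I := \Expect U_I^2 - \Expect V_I^2$ and $\tilde d_{IJ} := \Expect(U_I-U_J)^2 - \Expect(V_I-V_J)^2$, reorganizes this into
\[
\Psi'(t) \;=\; \tfrac{1}{2} \sum_{I} e_I \, \Expect\bigl[\partial_I A_\beta(Z(t))\bigr] \;-\; \tfrac{1}{4} \sum_{I \neq J} \tilde d_{IJ} \, \Expect\bigl[\partial_I \partial_J G_\beta(Z(t))\bigr],
\]
where $A_\beta := \sum_J \partial_J G_\beta$ is the directional derivative of $G_\beta$ along the all-ones direction.

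The second sum is handled by the classical Slepian--Gordon sign analysis of the smoothed min-max: $\partial_I \partial_J G_\beta \geq 0$ for different-row $I, J$ while $\partial_I \partial_J G_\beta \leq 0$ for same-row off-diagonal $I, J$; coupled with hypotheses (b) and (c) on $\tilde d_{IJ}$, each summand is nonnegative. The main obstacle, and the novel ingredient, is the first sum, where one must show $\sum_I e_I \Expect[\partial_I A_\beta] \geq 0$ using only $e_I \geq 0$. Writing $A_\beta(z) = \sum_{ij} r_{ij}(z) \phi'(z_{ij})$ with the joint softmax weights $r_{ij} \geq 0$ summing to one, a direct computation yields $\partial_I A_\beta = r_I \phi''(z_I) + R_I$, where the principal term $r_I \phi''(z_I) \geq 0$ by convexity of $\phi$ and the residual $R_I$ collects derivatives of the softmax weights. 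The key observation is that $R_I$ becomes negligible in the limit $\beta,\gamma \to \infty$: at indices $I$ not realizing the minimax the factor $r_I$ itself vanishes, while at the minimax-active index the Euler identity $\sum_{ij} \partial_I r_{ij} = 0$ (from $\sum_{ij} r_{ij} \equiv 1$) collapses $R_I$ into a mean-deviation term that vanishes after the $\beta,\gamma \to \infty$ passage. Combining signs then gives $\Psi'(t) \geq 0$; undoing the mollification of $\phi$ and the smoothing of the min-max completes the proof.
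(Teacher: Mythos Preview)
Your interpolation route is genuinely different from the paper's. The paper does \emph{not} smooth the min--max at all; instead it realises the Gaussians as $X_{ij}=\langle\vct x_{ij},\vct g\rangle$, $X_0=\langle\vct x_0,\vct g\rangle$ in a Euclidean space and, via Maurer's factorisation through the distance matrix (Lemma~\ref{lem:factor_D} and Lemma~\ref{lem:geom_maurer}), reduces the comparison to monotonicity of $\Phi$ in \emph{one} squared distance at a time. That monotonicity is established by the splitting trick of Lemma~\ref{lem:part-deriv}: write the ``anchor'' variable as $Y+Z$ with $Z$ independent of everything else, symmetrise $Z\mapsto\pm|Z|$, and compare the hard min--max on the two branches. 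Convexity of $f$ enters exactly once, in case~(2), through $f'(X_{k\ell}-X_0^+)\le f'(X_{k\ell}-X_0^-)$. No limit in $\beta,\gamma$ is ever taken.

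Your proposal is sound up to and including the decomposition of $\Psi'(t)$ and the off--diagonal sign analysis; with $G_\beta=H\!\circ\!\phi$ one indeed has $\partial_I\partial_J G_\beta=(\partial_{w_I}\partial_{w_J}H)\,\phi'(z_I)\phi'(z_J)$ for $I\neq J$, so the Gordon signs on $H$ transfer. The gap is the diagonal term. Writing $R_I=\sum_{J}(\partial_I r_J)\,\phi'(z_J)$ and invoking $\sum_J\partial_I r_J=0$ gives
\[
R_I=\phi'(z_I)\sum_{J\neq I}(\partial_{w_I}\partial_{w_J}H)\bigl(\phi'(z_J)-\phi'(z_I)\bigr),
\]
which has no sign: already for $m=1$, $n=2$ one gets $\sum_I e_I R_I=\beta\,p_1p_2\,(\phi'(z_1)-\phi'(z_2))\,(e_1\phi'(z_1)-e_2\phi'(z_2))$, which is strictly negative on a set of positive measure when, say, $e_1=0$, $e_2>0$. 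So for finite $\beta,\gamma$ you cannot conclude $\Psi'(t)\ge 0$, and your argument must be a limit argument: $\Psi_\beta(1)-\Psi_\beta(0)\ge -\varepsilon(\beta,\gamma)$ with $\varepsilon\to 0$. But the hand--wave ``$R_I$ becomes negligible'' does not supply this: the coefficients $\partial_{w_I}\partial_{w_J}H$ are of order $\beta+\gamma$ on a set whose measure shrinks only like $(\beta+\gamma)^{-1}$ (a neighbourhood of the argmax/argmin ties), so pointwise convergence $R_I\to 0$ does not by itself yield $\sup_{t\in[0,1]}\big|\sum_I e_I\,\Expect[R_I(Z(t))]\big|\to 0$. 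You would need a genuine uniform--integrability estimate, and none is provided. A tempting fix---using $G_\beta=\phi\circ H$ instead, which makes $\partial_I A_\beta=\phi''(H)\,\partial_{w_I}H\ge 0$ outright---destroys the same--row sign, since then $\partial_I\partial_J G_\beta=\phi''(H)(\partial_{w_I}H)(\partial_{w_J}H)+\phi'(H)\,\partial_{w_I}\partial_{w_J}H$ need not be $\le 0$ within a row. So neither ordering of the composition closes the argument without further work. The paper's coordinate--by--coordinate approach sidesteps exactly this difficulty.
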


In Section~\ref{sec:lin_imag} we provide an example, which shows that~\eqref{eq:E[f(minmax(0,X_(ij)-X0))]>=...} may fail if~$f$ is not convex. 
The proof we present is based on a geometric reduction from Maurer~\cite{M:11}, cf.~Lemma~\ref{lem:geom_maurer}.

In the following we fix a monotonically increasing convex function $f\colon\IR_+\to\IR$, which is differentiable on $(0,\infty)$ and satisfies $\lim_{x\to0+}f'(x)=0$. The extension $f_+\colon\IR\to\IR$, with $f_+(x):=f(x)$, if $x\geq0$, and $f_+(x):=f(0)$, if $x\leq0$, is thus monotonically increasing, convex, and differentiable on~$\IR$. On the Euclidean space $\IR\times\IR^{m\times n}$, whose elements we denote by $\vct x=(x_0,x_{11},\ldots,x_{mn})$, we define $F\colon\IR\times\IR^{m\times n}\to\IR$ by
\begin{equation}\label{eq:def-F}
  F(\vct x) := \min_i\max_j f_+(x_{ij}-x_0).
\end{equation}
This function is differentiable almost everywhere. More precisely, it is differentiable if
  \[ \min_i\max_j x_{ij}<x_0 \quad\text{or}\quad \big|\big\{(k,\ell)\mid x_{k\ell} = \min_i\,\max_j\, \max\{x_{ij},x_0\}>x_0\big\}\big|=1 . \]
In the first case $\nabla F(\vct x)=0$. In the second case $\nabla F(\vct x)$ is zero except for the $(k,\ell)$th entry, $x_{k\ell} = \min_i\,\max_j\, \max\{x_{ij},x_0\}$ ($>x_0$), which is given by~$f'(x_{k\ell}-x_0)$. So, if $\vct x(t)$ is a differentiable curve through~$\vct x$ with $\vct x(0)=\vct x$, $\dot{\vct{x}}:=\dot{\vct{x}}(0)$, then
\begin{equation}\label{eq:deriv_F}
  \tfrac{d}{dt} F(\vct x(t))\big|_{t=0} = \ip{\nabla F(\vct{x})}{\dot{\vct{x}}} = \dot x_{k\ell} f'(x_{k\ell}-x_0) .
\end{equation}

\begin{lemma}\label{lem:part-deriv}
Let $X_0$ and $X_{ij}$, $1\leq i\leq m$, $1\leq j\leq n$, be centered Gaussian random variables such that their joint covariance matrix has full rank. Fix $1\leq k_0,k\leq m$ and $1\leq \ell_0,\ell\leq n$ with $(k_0,\ell_0)\neq(k,\ell)$, and let $Y,Z$ be Gaussians, defined in one of the two following ways:
\begin{enumerate}
  \item $X_{k_0\ell_0}=Y+Z$ with $Z$ independent of $Y,X_0,X_{ij}$, for all $(i,j)\neq (k_0,\ell_0)$,
  \item $X_0=Y+Z$ with $Z$ independent of $Y,X_{ij}$, for all $(i,j)$.
\end{enumerate}
If $\vct X(t)$ is 
defined by
\begin{equation}\label{eq:def-X_(ij)(t)-nonzero}
  X_0(t) := X_0 ,\qquad X_{ij}(t):=X_{ij} ,\quad\text{for } (i,j)\neq (k,\ell) ,\qquad X_{k\ell}(t) := X_{k\ell}+tZ ,
\end{equation}
then
  \[ \tfrac{d}{dt}\Expect\big[ F(\vct X(t))\big]\big|_{t=0} \begin{cases} \leq 0 & \text{if~$Y,Z$ defined as in (1) and $k=k_0$, or $Y,Z$ defined as in (2)} \\ \geq 0 & \text{if~$Y,Z$ defined as in (1) and $k\neq k_0$} . \end{cases} \]
\end{lemma}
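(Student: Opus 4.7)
The plan is to reduce the problem to the computation of $\Expect[Z \cdot H(\vct X)]$ for a nonnegative function $H(\vct X)$, and then to sign this expectation by Gaussian integration by parts (Stein's lemma) combined with the monotonicity structure of the min--max in $F$. First, since $f_+$ is monotone nondecreasing it commutes with both $\min_i$ and $\max_j$, so $F = f_+\circ G$ with $G(\vct x) := \min_i\max_j(x_{ij}-x_0)$. Because $F$ is Lipschitz one may interchange differentiation and expectation, and \eqref{eq:deriv_F} gives $\partial_{x_{k\ell}}F = f_+'(G)\,\mathbbm{1}_{B_{k\ell}}$ where $B_{k\ell}$ is the a.s.\ event that $(k,\ell)$ uniquely realizes the argmin--argmax in the definition of $G$. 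Setting $H(\vct x) := f_+'(G(\vct x))\,\mathbbm{1}_{B_{k\ell}}(\vct x) \ge 0$, the chain rule therefore yields
\[
\tfrac{d}{dt}\Expect\big[F(\vct X(t))\big]\big|_{t=0} \;=\; \Expect\big[Z\cdot H(\vct X)\big],
\]
and the task reduces to signing this quantity in each case.

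For case (2) I would condition on $Y$ and on all $X_{ij}$; under this conditioning $Z\sim N(0,\Var Z)$ is independent and $X_0=Y+Z$, so $H(\vct X)$ depends on $Z$ only through $X_0$. The one-dimensional Stein identity in the variable $Z$ gives
\[
\Expect[Z H(\vct X)] \;=\; \Var(Z)\,\Expect\big[\partial_{x_0}H(\vct X)\big].
\]
The sign now comes out cleanly because $G$ depends on $\vct x$ only through the differences $x_{ij}-x_0$: a uniform shift of $x_0$ leaves the argmin--argmax of $G$ invariant, hence $\partial_{x_0}\mathbbm{1}_{B_{k\ell}}=0$ distributionally, and the surviving contribution is $f_+''(G)\,\partial_{x_0}G\,\mathbbm{1}_{B_{k\ell}} = -f_+''(G)\mathbbm{1}_{B_{k\ell}} \le 0$, using $\partial_{x_0}G\equiv -1$ and $f_+'' \ge 0$ by convexity of $f$.

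For case (1), conditioning on $Y$, $X_0$, and all $X_{ij}$ with $(i,j)\neq(k_0,\ell_0)$ leaves $Z\sim N(0,\Var Z)$ independent and couples it to the remaining Gaussian via $X_{k_0\ell_0}=Y+Z$. The same Stein identity produces
\[
\Expect[Z H(\vct X)] \;=\; \Var(Z)\,\Expect\big[\partial_{x_{k_0\ell_0}}H(\vct X)\big].
\]
The product rule on $H=f_+'(G)\mathbbm{1}_{B_{k\ell}}$ yields two terms; the first, $f_+''(G)\mathbbm{1}_{B_{k_0\ell_0}}\mathbbm{1}_{B_{k\ell}}$, vanishes a.s.\ since the argmin--argmax is a.s.\ unique under the full-rank hypothesis and $(k_0,\ell_0)\neq(k,\ell)$. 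The residual term is $f_+'(G)\,\partial_{x_{k_0\ell_0}}\mathbbm{1}_{B_{k\ell}}$, whose sign I would read off from the combinatorics of $\min_i\max_j$: for $k=k_0$, raising $x_{k_0\ell_0}$ can only let column $\ell_0$ overtake column $\ell$ in the inner $\max_j$ of row $k$, so $\mathbbm{1}_{B_{k\ell}}$ is nonincreasing and the contribution is $\le 0$; for $k\neq k_0$, raising $x_{k_0\ell_0}$ only enlarges $\max_j(x_{k_0 j}-x_0)$, making row $k_0$ less competitive in the outer $\min_i$ and therefore row $k$ more likely to realize it, so $\mathbbm{1}_{B_{k\ell}}$ is nondecreasing and the contribution is $\ge 0$.

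The main obstacle will be turning the informal manipulations of the indicator $\mathbbm{1}_{B_{k\ell}}$---whose distributional gradient is a signed measure on the codimension-one set where two rows or two columns of $G$ tie---into a rigorous calculation. The standard remedy is a two-parameter smoothing: replace $f_+$ by a $C^2$ approximation and the outer $\min_i\max_j$ by its soft version $\pm\beta^{-1}\log\sum\exp(\pm\beta\,\cdot)$. In the smooth category Stein's lemma and all chain-rule computations above become literal, and the signs of the mixed partials are immediate from the well-known supermodularity of $\max$ (cross partials $\le 0$ within a row) and submodularity of $\min$ (cross partials $\ge 0$ across rows); passage to the limit is by dominated convergence against the Gaussian density. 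This is precisely the geometric reduction of Maurer~\cite{M:11} recorded in Lemma~\ref{lem:geom_maurer}, which we would invoke to bypass explicit calculation with delta measures.
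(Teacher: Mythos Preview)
Your reduction to $\Expect[Z\,H(\vct X)]$ with $H=\partial_{x_{k\ell}}F=f_+'(G)\,\mathbf{1}_{B_{k\ell}}$ is correct, and your treatment of case~(2) via Stein's identity is clean and rigorous: since $\mathbf{1}_{B_{k\ell}}$ does not depend on $x_0$ and $f_+'$ is absolutely continuous (as the derivative of a convex function), Stein applies literally and returns $-\Var(Z)\,\Expect[f_+''(G)\,\mathbf{1}_{B_{k\ell}}]\le 0$. This is a different packaging from the paper, which instead symmetrizes via $Z\mapsto\pm|Z|$ and compares the two halves pointwise; your version isolates the role of convexity more transparently.

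There is, however, a genuine gap in your smoothing argument for case~(1) with $k=k_0$. After replacing $G$ by its soft version $G_\beta$, the chain rule gives
\[
\partial_{x_{k_0\ell_0}}\partial_{x_{k\ell}}\big[f_+(G_\beta)\big]
= f_+''(G_\beta)\,(\partial_{x_{k_0\ell_0}}G_\beta)\,(\partial_{x_{k\ell}}G_\beta)
+ f_+'(G_\beta)\,\partial_{x_{k_0\ell_0}}\partial_{x_{k\ell}}G_\beta .
\]
Supermodularity of soft-max does force the second summand $\le 0$, but the first summand is \emph{nonnegative} (all three factors are $\ge 0$) and does not vanish for any finite~$\beta$; your intuition that it should disappear because $B_{k\ell}\cap B_{k_0\ell_0}=\emptyset$ a.s.\ holds only in the hard limit. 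Concretely, with $m=1$, $n=2$, $f(x)=x^2$ one finds $\partial_{x_{12}}\partial_{x_{11}}F_\beta = 2q_1q_2(1-\beta G_\beta)$, which is strictly positive whenever $0<G_\beta<1/\beta$. So the smooth mixed partial does not have the required sign, and you cannot pass a correct inequality through the limit $\beta\to\infty$. (Lemma~\ref{lem:geom_maurer} does not help here either: it organizes \emph{which} directional derivatives must be signed, not how to regularize them.)

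The fix is simpler than smoothing and is essentially what the paper does. Conditionally on $Y,X_0$ and all $X_{ij}$ with $(i,j)\neq(k_0,\ell_0)$, set $\phi(z):=H(\vct X)\big|_{X_{k_0\ell_0}=Y+z}$. On the event $B_{k\ell}$ one has $G=x_{k\ell}-x_0$, which does not involve $x_{k_0\ell_0}$; hence $\phi(z)=f_+'(x_{k\ell}-x_0)\cdot\mathbf{1}_{B_{k\ell}}$ is a nonnegative constant times an indicator that you have already (correctly) argued is monotone in $x_{k_0\ell_0}$. The elementary identity $\Expect[Z\phi(Z)]=\tfrac12\,\Expect\big[|Z|\,(\phi(|Z|)-\phi(-|Z|))\big]$ then yields the sign directly from monotonicity of~$\phi$, with no regularization needed. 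This is exactly the mechanism encoded in the paper's $\pm|Z|$ symmetrization; once you make this substitution in case~(1), your argument goes through.
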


\begin{proof}
We distinguish between the cases (1) and (2).

(1) Let $X_{k_0\ell_0}=Y+Z$ as described above. We define $\vct X^+(t),\vct X^-(t)$ by 
\begin{align*}
   X_0^+(t) & := X_0^-(t) := X_0, & X_{ij}^+(t) & := X_{ij}^-(t) := X_{ij} ,\quad \text{if } (i,j)\not\in\{(k,\ell),(k_0,\ell_0)\} ,\hspace{-6cm}
\\[1mm] X_{k_0\ell_0}^+(t) & := Y + |Z| , & X_{k_0\ell_0}^-(t) & := Y - |Z| , & X_{k\ell}^+(t) & := X_{k\ell}+t|Z| , & X_{k\ell}^-(t) & := X_{k\ell}-t|Z| ,
\end{align*}
and denote $X_{ij}^+:=X_{ij}^+(0)$ and $X_{ij}^-:=X_{ij}^-(0)$. Since $Z$ is independent of $Y,X_0,X_{ij}$, for $(i,j)\neq (k_0,\ell_0)$, we have
  \[ \Expect\big[ F(\vct X(t))\big] = \tfrac{1}{2} \Expect\big[ F(\vct X^+(t)) + F(\vct X^-(t)) \big] .  \]
To simplify the notation, we set
  \[ \dot{\vct X}^+ = \tfrac{d}{dt} \vct X^+(t)\big|_{t=0} ,\qquad \dot{\vct X}^- = \tfrac{d}{dt} \vct X^-(t)\big|_{t=0} . \]
A standard argument involving Lebesgue's dominated convergence theorem shows that
\begin{align*}
 \tfrac{d}{dt}\Expect\big[F(\vct X^+(t))\big] & =\Expect\big[\tfrac{d}{dt}F(\vct X^+(t))\big], &
 \tfrac{d}{dt}\Expect\big[F(\vct X^-(t))\big] & =\Expect\big[\tfrac{d}{dt}F(\vct X^-(t))\big],
\end{align*}
so that it is enough to show that almost surely
\begin{equation}\label{eq:expect<0_reduced}
  \tfrac{d}{dt}F(\vct X^+(t))+\tfrac{d}{dt}F(\vct X^-(t)) =
   \ip{\nabla F(\mtx{X}^+)}{\dot{\vct X}^+}+\ip{\nabla F(\mtx{X}^-)}{\dot{\vct X}^-} \begin{cases} \leq 0 & \text{if } k=k_0 \\ \geq 0 & \text{if } k\neq k_0 . \end{cases}
\end{equation}
Note that $X_{k\ell}^+=X_{k\ell}$ and $X_0^+=X_0$. By~\eqref{eq:deriv_F}, almost surely
\begin{align*}
   \ip{\nabla F(\mtx{X}^+)}{\dot{\vct X}^+}  & = \begin{cases}
      \dot X_{k\ell}^+ \, f'(X_{k\ell}-X_0) & \text{if } X_{k\ell} = \min_i\max_j\max\{X_{ij}^+,X_0\} > X_0 \text{ and}
   \\ & \hspace{2mm} X_{k'\ell'}^+ \neq \min_i\max_j\max\{X_{ij}^+,X_0\} \text{ for all } (k',\ell')\neq (k,\ell)
   \\ 0 & \text{else (almost surely)} ,
   \end{cases}
\end{align*}
and similarly for 
$\ip{\nabla F(\mtx{X}^-)}{\dot{\vct X}^-}$.
Note that if $X_{k\ell} = \min_i\max_j\max\{X_{ij}^+,X_0\} > X_0$, then almost surely $X_{k'\ell'}^+ \neq \min_i\max_j\max\{X_{ij}^+,X_0\}$ for all $(k',\ell')\neq (k,\ell)$, so we may skip this additional condition. Since $\dot X_{k\ell}^+=|Z|$ and $\dot X_{k\ell}^-=-|Z|$ and by the monotonicity of~$f$, we have 
$\ip{\nabla F(\mtx{X}^+)}{\dot{\vct X}^+}\geq0$ and 
$\ip{\nabla F(\mtx{X}^-)}{\dot{\vct X}^-}\leq 0$.

If $k=k_0$ then $X_{k\ell} = \min_i\max_j\max\{X_{ij}^+,X_0\} > X_0$ implies $X_{k\ell} = \min_i\max_j\max\{X_{ij}^-,X_0\} > X_0$, and in this case 
$\ip{\nabla F(\mtx{X}^+)}{\dot{\vct X}^+}+\ip{\nabla F(\mtx{X}^-)}{\dot{\vct X}^-}=0$. Since this is the only case in which $\ip{\nabla F(\mtx{X}^+)}{\dot{\vct X}^+}$
is nonzero (with positive probability), we have almost surely 
$\ip{\nabla F(\mtx{X}^+)}{\dot{\vct X}^+}+\ip{\nabla F(\mtx{X}^-)}{\dot{\vct X}^-}\leq 0$.

If $k\neq k_0$ then $X_{k\ell} = \min_i\max_j\max\{X_{ij}^-,X_0\} > X_0$ implies $X_{k\ell} = \min_i\max_j\max\{X_{ij}^+,X_0\} > X_0$, and in this case 
$\ip{\nabla F(\mtx{X}^+)}{\dot{\vct X}^+}+\ip{\nabla F(\mtx{X}^-)}{\dot{\vct X}^-}=0$. Since this is the only case in which $\ip{\nabla F(\mtx{X}^-)}{\dot{\vct X}^-}$
is nonzero (with positive probability), we have almost surely 
$\ip{\nabla F(\mtx{X}^+)}{\dot{\vct X}^+}+\ip{\nabla F(\mtx{X}^-)}{\dot{\vct X}^-}=0$.

This settles the first case.

\smallskip

(2) Let $X_0=Y+Z$ as described above. We define $\vct X^+(t),\vct X^-(t)$ by
\begin{align*}
   & & X_{ij}^+(t) & := X_{ij}^-(t) := X_{ij} ,\quad \text{if } (i,j)\neq (k,\ell) ,\hspace{-6cm}
\\[1mm] X_0^+(t) & := Y + |Z| , & X_0^-(t) & := Y - |Z| , & X_{k\ell}^+(t) & := X_{k\ell}+t|Z| , & X_{k\ell}^-(t) & := X_{k\ell}-t|Z| .
\end{align*}
Again, from the independence assumption on $Z$ we obtain $\Expect[ F(\vct X(t))] = \tfrac{1}{2} \Expect[ F(\vct X^+(t)) + F(\vct X^-(t))]$, and it suffices to show that almost surely
\begin{equation}\label{eq:expect<0_reduced_zero}
\ip{\nabla F(\mtx{X}^+)}{\dot{\vct X}^+}+\ip{\nabla F(\mtx{X}^-)}{\dot{\vct X}^-} \leq 0
\end{equation}
Note that $X_{ij}^+=X_{ij}$ for all $(i,j)$. By~\eqref{eq:deriv_F},
\begin{align*}
   \ip{\nabla F(\mtx{X}^+)}{\dot{\vct X}^+} & = \begin{cases}
      \dot X_{k\ell}^+ \, f'(X_{k\ell}-X_0^+) & \text{if } X_{k\ell} = \min_i\max_j\max\{X_{ij},X_0^+\} > X_0^+ \text{ and}
   \\ & \hspace{2mm} X_{k'\ell'} \neq \min_i\max_j\max\{X_{ij},X_0^+\} \text{ for all } (k',\ell')\neq (k,\ell)
   \\ 0 & \text{else (almost surely)} ,
   \end{cases}
\end{align*}
and similarly for $\ip{\nabla F(\mtx{X}^-)}{\dot{\vct X}^-}$. As in the first case, we can skip the additional uniqueness condition, which is almost surely satisfied, and again, $\dot X_{k\ell}^+=|Z|$ and $\dot X_{k\ell}^-=-|Z|$ and the monotonicity of~$f$ imply $\ip{\nabla F(\mtx{X}^+)}{\dot{\vct X}^+}\geq0$ and $\ip{\nabla F(\mtx{X}^-)}{\dot{\vct X}^-}\leq 0$.

Now, $X_{k\ell} = \min_i\max_j\max\{X_{ij},X_0^+\} > X_0^+$ implies $X_{k\ell} = \min_i\max_j\max\{X_{ij},X_0^-\} > X_0^-$, and in this case $X_{k\ell}-X_0^+\leq X_{k\ell}-X_0^-$. By convexity of~$f$ it follows that $f'(X_{k\ell}-X_0^+)\leq f'(X_{k\ell}-X_0^-)$, and thus $\ip{\nabla F(\mtx{X}^+)}{\dot{\vct X}^+}+\ip{\nabla F(\mtx{X}^-)}{\dot{\vct X}^-}\leq 0$. Since this is the only case in which $\ip{\nabla F(\mtx{X}^+)}{\dot{\vct X}^+}$ is nonzero, we have almost surely $\ip{\nabla F(\mtx{X}^+)}{\dot{\vct X}^+} + \ip{\nabla F(\mtx{X}^-)}{\dot{\vct X}^-} \leq 0$. This settles the second case.
\end{proof}

It remains to show that Lemma~\ref{lem:part-deriv} indeed implies Theorem~\ref{prop:gordon-with-0}. We deduce this from general geometric arguments as used in~\cite{M:11}. We reproduce these arguments in the following for convenience of the reader, except for Lemma~\ref{lem:factor_D}, which is a copy of~\cite[Lem.~4]{M:11} and follows from well-known properties of Euclidean distance matrices, cf.~\cite{KW:12} for a recent survey on this theory.

In the following let $\mE$ denote $d$-dimensional Euclidean space~$\R^d$. Recall that a function $\Phi\colon\mE^k\to \IR$ is Euclidean motion invariant if
  \[ \Phi(\vct x_1+\vct y,\ldots,\vct x_k+\vct y) = \Phi(\vct{Qx}_1,\ldots,\vct{Qx}_k) = \Phi(\vct x_1,\ldots,\vct x_k) \]
for all $\vct x_1,\ldots,\vct x_k,\vct y\in\mE$ and $\vct Q\in O(\mE)$. We identify the general linear group on~$\mE$ with the set of bases of~$\mE$:
  \[ \GL(\mE) = \{(\vct x_1,\ldots,\vct x_d)\mid \vct x_i \text{ linear independent}\} \subset\mE^d . \]
Let $D\colon\mE^d\to\IR^{\binom{d}{2}}$ be defined by
\begin{equation}\label{eq:def-D}
  D(\vct x_1,\ldots,\vct x_d) := \big(\|\vct x_i-\vct x_j\|^2\big)_{i<j} ,
\end{equation}
and denote $\Delta:=D(\mE^d)$ and $\Delta_0:=D(\GL(\mE))$.

\begin{lemma}\label{lem:factor_D}
Let $\mE,D,\Delta,\Delta_0$ be defined as above. The sets $\Delta,\Delta_0$ are convex, $\Delta_0$ is open, and $\Delta$ is the closure of $\Delta_0$. Furthermore, any Euclidean motion invariant function $\Phi\colon\mE^d\to \IR$ factorizes uniquely over $D$,
  \[ \Phi = \vp\circ D ,\qquad \vp\colon \IR^{\binom{d}{2}}\to\IR . \]
Additionally, if $\Phi$ is continuous, then so is~$\vp$, and if $\Phi$ is differentiable on~$\GL(\mE)$, then $\phi$ is differentiable on $\Delta_0$.
\end{lemma}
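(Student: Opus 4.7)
The plan is to reduce every claim to the corresponding property of the positive semidefinite cone, via a Schoenberg-type change of variables. Identify $\R^{\binom{d}{2}}$ with symmetric tuples $(M_{ij})_{1\le i<j\le d}$, and define the linear map $\Psi$ into the space of symmetric $(d-1)\times(d-1)$ matrices by
$$\Psi(M)_{ij} := \tfrac{1}{2}\bigl(M_{id}+M_{jd}-M_{ij}\bigr), \qquad 1\le i,j\le d-1.$$
A direct computation shows that whenever $M = D(\vct x_1,\ldots,\vct x_d)$, the matrix $\Psi(M)$ is the Gram matrix of $\vct x_1 - \vct x_d,\ldots,\vct x_{d-1} - \vct x_d$. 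Since $\Psi$ is a linear bijection between real vector spaces of dimension $\binom{d}{2}$, the convex-topological structure of $\Delta$ and $\Delta_0$ can be read off directly from their images under $\Psi$.

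Under this dictionary, $M\in\Delta$ if and only if $\Psi(M)$ is positive semidefinite --- a realization is obtained by appending $\vct x_d=\vct 0$ to any Cholesky factor --- and $M\in\Delta_0$ if and only if $\Psi(M)$ is positive definite, because a $d$-tuple admits a linearly independent representative iff it is affinely independent, iff the $(d-1)$-tuple of differences is linearly independent, iff its Gram matrix is nonsingular. As the PSD cone is closed and convex with interior equal to the PD cone, $\Delta$ is closed and convex, $\Delta_0$ is open and convex, and $\overline{\Delta_0}=\Delta$.

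For the factorization, if $D(\vct x_i)=D(\vct x_i')$, then after translating $\vct x_d$ and $\vct x_d'$ to the origin the two $(d-1)$-tuples of differences share a Gram matrix, hence differ by some $\vct Q\in O(\mE)$; so the original tuples differ by a Euclidean motion. Any EM-invariant $\Phi$ therefore descends uniquely to $\varphi$ on $\Delta$, and one extends to $\R^{\binom{d}{2}}$ arbitrarily (or by the Tietze extension theorem, applied to the closed set $\Delta$, when continuity must be preserved).

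The principal technical obstacle is to produce a section $s\colon\Delta\to\mE^d$ of $D$ that is continuous on $\Delta$, smooth on $\Delta_0$, and carries $\Delta_0$ into $\GL(\mE)$. The naive choice with $\vct x_d=\vct 0$ fails, since the origin cannot belong to a basis. The fix is a hyperplane shift into $\R^{d-1}\times\{1\}\subset\R^d$: writing the rows of the symmetric square root $\Psi(M)^{1/2}$ as $\vct y_1,\ldots,\vct y_{d-1}\in\R^{d-1}$, set
$$s(M) := \bigl((\vct y_1,1),\ldots,(\vct y_{d-1},1),(\vct 0,1)\bigr).$$
Expansion of the resulting determinant along the last column shows that $s(M)\in\GL(\mE)$ exactly when $\Psi(M)$ is nonsingular, i.e.\ when $M\in\Delta_0$. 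Since $G\mapsto G^{1/2}$ is continuous on the PSD cone and smooth on the PD cone, $s$ is continuous on $\Delta$ and smooth on $\Delta_0$, so $\varphi=\Phi\circ s$ inherits continuity on $\Delta$ and differentiability on $\Delta_0$, concluding the proof.
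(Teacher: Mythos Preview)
Your argument is correct. The paper does not actually prove this lemma; it states explicitly that the result ``is a copy of~[M:11, Lem.~4] and follows from well-known properties of Euclidean distance matrices'', citing the survey~[KW:12]. So there is no in-paper proof to compare against, and what you have supplied is precisely the kind of self-contained argument the paper defers to the literature.

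Your route---the Schoenberg linear isomorphism $\Psi$ to Gram matrices, identifying $\Delta$ and $\Delta_0$ with the PSD and PD cones, and building a smooth section via the matrix square root lifted to the affine hyperplane $\R^{d-1}\times\{1\}$---is the standard Euclidean distance matrix machinery the paper alludes to. The hyperplane shift is a clean way to force the section to land in $\GL(\mE)$ exactly on $\Delta_0$; your row-reduction determinant check confirms this. Two small points worth making explicit in a polished write-up: the continuity of $G\mapsto G^{1/2}$ on the full PSD cone (not just the interior) is being used and, while true, is not entirely trivial; and the word ``uniquely'' in the lemma can only refer to $\varphi|_\Delta$, as you implicitly acknowledge when invoking Tietze for the extension.
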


The following lemma describes the proof strategy as demonstrated in~\cite{M:11}.

\begin{lemma}\label{lem:geom_maurer}
Let $\mE,D,\Delta,\Delta_0$ be defined as above and let $\Phi\colon\mE^d\to \IR$ be a continuous Euclidean motion invariant function, which is differentiable on~$\GL(\mE)$. Assume that for some symmetric sign matrix $\vct S\in\{1,-1\}^{d\times d}$ the following holds: for every basis $(\vct x_1,\ldots,\vct x_d)\in\GL(\mE)$ and all $i_0,j_0$ there exists a curve~$(\vct x_1(t),\ldots,\vct x_d(t))$, $\vct x_i(0)=\vct x_i$, such that
  \[ s_{ij}\, \tfrac{d}{dt} \|\vct x_i(t)-\vct x_j(t)\|^2\big|_{t=0} \begin{cases} < 0 & \text{if } (i,j)=(i_0,j_0) \\ = 0 & \text{else} , \end{cases} \qquad\text{and}\qquad \tfrac{d}{dt} \Phi(\vct x_1(t),\ldots,\vct x_d(t))\big|_{t=0} \leq 0 . \]
Then for all $(\vct x_1,\ldots,\vct x_d),(\vct y_1,\ldots,\vct y_d)\in\mE^d$ satisfying $s_{ij}\|\vct x_i-\vct x_j\|^2 \geq s_{ij}\|\vct y_i-\vct y_j\|^2$ for all~$i<j$, we have
\begin{equation}\label{eq:claim-lem-Maur}
  \Phi(\vct x_1,\ldots,\vct x_d) \geq \Phi(\vct y_1,\ldots,\vct y_d) .
\end{equation}
\end{lemma}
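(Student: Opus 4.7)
My plan is to factor through $D$ using Lemma~\ref{lem:factor_D} and reduce the claim to a one-dimensional monotonicity argument on the convex set $\Delta$. Writing $\Phi = \varphi \circ D$ with $\varphi$ continuous on $\Delta$ and differentiable on $\Delta_0$, and setting $\vct u := D(\vct x_1,\ldots,\vct x_d)$, $\vct v := D(\vct y_1,\ldots,\vct y_d)$, the claim~\eqref{eq:claim-lem-Maur} is equivalent to $\varphi(\vct u) \ge \varphi(\vct v)$ under the coordinate-wise hypothesis $s_{ij}(u_{ij}-v_{ij}) \ge 0$. The curve hypothesis then translates into a clean sign condition on $\nabla\varphi$: at any $\vct u \in \Delta_0$ and any pair $(i_0,j_0)$, the provided curve has $\tfrac{d}{dt} D(\vct x(t))|_{t=0} = -s_{i_0 j_0} c\,\vct e_{i_0 j_0}$ for some $c>0$ (where $\vct e_{i_0 j_0}$ is the $(i_0,j_0)$ standard basis vector of $\R^{\binom{d}{2}}$), so the chain rule gives
\begin{equation*}
0 \;\ge\; \frac{d}{dt}\Phi(\vct x(t))\Big|_{t=0} \;=\; \Bigl\langle \nabla\varphi(\vct u),\; \tfrac{d}{dt}D(\vct x(t))\big|_{t=0}\Bigr\rangle \;=\; -c\,s_{i_0 j_0}\,\partial_{i_0 j_0}\varphi(\vct u),
\end{equation*}
that is, $s_{i_0 j_0}\,\partial_{i_0 j_0}\varphi(\vct u) \ge 0$ on all of $\Delta_0$.

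With the sign condition in hand, the estimate follows by integration along a straight segment. If $\vct u,\vct v\in\Delta_0$, then by convexity of $\Delta_0$ the segment $\vct u(t) := (1-t)\vct v + t\vct u$ lies in $\Delta_0$ for $t\in[0,1]$, hence
\begin{equation*}
\varphi(\vct u)-\varphi(\vct v) \;=\; \int_0^1 \sum_{i<j}\bigl[s_{ij}\,\partial_{ij}\varphi(\vct u(t))\bigr]\,\bigl[s_{ij}(u_{ij}-v_{ij})\bigr]\,dt \;\ge\; 0,
\end{equation*}
each summand being a product of two nonnegative factors. For the general case $\vct u,\vct v\in\Delta$ I would fix any $\vct u_0\in\Delta_0$ and consider the perturbations $(1-\eps)\vct u+\eps\vct u_0$, $(1-\eps)\vct v+\eps\vct u_0$; since $\Delta_0$ is open, convex, and has closure $\Delta$, one has $\Delta_0 = \inter(\Delta)$, so the line-segment principle places both perturbations in $\Delta_0$ for $\eps\in(0,1]$. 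The signed inequality is preserved under this convex shift, so the case already handled applies, and continuity of $\varphi$ on $\Delta$ yields the conclusion upon letting $\eps\to 0$.

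The main obstacle is the translation step: extracting a uniform, coordinate-wise sign constraint on $\nabla\varphi$ from a quantified ``exists a curve'' hypothesis formulated on the larger, redundant space $\mE^d$. Once this is in place, the remaining argument is a routine convex integration combined with a standard interior-density approximation; the only technical care needed is to ensure that the perturbed segments genuinely land in $\Delta_0$, which is handled via the openness and closure relation between $\Delta_0$ and $\Delta$ recorded in Lemma~\ref{lem:factor_D}.
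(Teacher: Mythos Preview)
Your proposal is correct and follows essentially the same route as the paper: factor $\Phi=\varphi\circ D$ via Lemma~\ref{lem:factor_D}, convert the curve hypothesis into the sign condition $s_{i_0 j_0}\,\partial_{i_0 j_0}\varphi\ge 0$ on $\Delta_0$ by the chain rule, and deduce monotonicity along the segment by convexity. Your treatment is in fact more explicit than the paper's in two places---the integral computation and the interior-approximation step via the line-segment principle---where the paper simply writes ``by continuity'' and ``monotonicity in direction~$C_{\vct S}$''.
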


\begin{proof}
Using the decomposition $\Phi=\vp\circ D$, we can paraphrase the claim in terms of~$\vp$. For this, we define
  \[ \big\{(a_{ij})_{i<j}\in\IR^{\binom{d}{2}}\mid s_{ij} a_{ij}\leq 0 \text{ for all } i<j\big\} =: C_{\vct S} , \]
which is an isometric image of the nonnegative orthant $\IR_+^{\binom{d}{2}}$. The claim of the lemma is that for all $(a_{ij}),(b_{ij})\in \Delta$ with $(b_{ij}-a_{ij})\in C_{\vct S}$ we have $\vp(a_{ij}) \geq \vp(b_{ij})$.

By continuity of~$\Phi$ it suffices to show the claim~\eqref{eq:claim-lem-Maur} for bases $(\vct x_1,\ldots,\vct x_d),(\vct y_1,\ldots,\vct y_d)\in\GL(\mE)$. In terms of~$\vp$ the claim can then be restated by saying that for any point $(a_{ij})\in \Delta_0$ the derivative of~$\vp$ is nonpositive in any direction~$(v_{ij})\in C_{\vct S}$. By linearity of the derivative of~$\vp$ and by convexity of~$C_{\vct S}$, it suffices to show the monotonicity of~$\vp$ in the extreme directions of the cone~$C_{\vct S}$. Choosing such an extreme direction $(v_{ij})$ with $v_{ij} < 0$ if $(i,j)=(i_0,j_0)$ and $v_{ij}=0$ if $(i,j)\neq(k,\ell)$, and letting the curve $\vct X(t) = (\vct x_1(t),\ldots,\vct x_d(t))$ be such that $\frac{d}{dt} D(\vct X(t))\big|_{t=0}=(v_{ij})$, we obtain
  \[ \nabla_{(a_{ij})} \vp(v_{ij}) = \tfrac{d}{dt} \Phi(\vct x_1(t),\ldots,\vct x_d(t))\big|_{t=0} \leq 0 \]
by assumption. This shows the monotonicity of~$\vp$ in direction~$C_{\vct S}$ and thus proves the claim.
\end{proof}

\begin{proof}[Proof of Theorem~\ref{prop:gordon-with-0}]
By continuity we may assume that~$f$ is differentiable on $(0,\infty)$ and satisfies $\lim_{x\to0+}f'(x)=0$. We consider the Euclidean space $\mE=\IR\times\IR^{m\times n}$, and define
  \[ \Phi\colon\mE^{1+mn}\to \IR ,\qquad \Phi(\vct x_0,\vct x_{11},\ldots,\vct x_{mn}) := \Expect\big[ \min_i\max_j f_+(\langle \vct x_{ij}-\vct x_0,\vct g\rangle)\big] , \]
where $\vct g$ is a standard Gaussian vector in~$\mE$. The map~$\Phi$ is Euclidean motion invariant, continuous, and differentiable on~$\GL(\mE)$.
Setting $X_0 = \langle \vct x_0,\vct g\rangle$ and $X_{ij} = \langle \vct x_{ij},\vct g\rangle$, we have
  \[ \Expect |X_{ij}-X_{k\ell}|^2 = \|\vct x_{ij}-\vct x_{k\ell}\|^2 ,\qquad \Expect |X_{ij}-X_0|^2 = \|\vct x_{ij}-\vct x_0\|^2 , \]
and we can reformulate the claim of Theorem~\ref{prop:gordon-with-0} in terms of~$\Phi$:

If $(\vct x_0,\vct x_{11},\ldots,\vct x_{mn}),(\vct y_0,\vct y_{11},\ldots,\vct y_{mn})\in\mE^{1+mn}$ satisfy
\begin{align*}
   \|\vct x_{ij}-\vct x_{k\ell}\|^2 & \leq \|\vct y_{ij}-\vct y_{k\ell}\|^2 \;\; \text{if } i\neq k , & \|\vct x_{ij}-\vct x_{i\ell}\|^2 & \geq \|\vct y_{ij}-\vct y_{i\ell}\|^2 , & \|\vct x_{ij}-\vct x_0\|^2 & \geq \|\vct y_{ij}-\vct y_0\|^2 ,
\end{align*}
then $\Phi(\vct x_0,\vct x_{11},\ldots,\vct x_{mn})\geq \Phi(\vct y_0,\vct y_{11},\ldots,\vct y_{mn})$.

By Lemma~\ref{lem:geom_maurer} we obtain a different condition that we need to verify, and in the remainder of the proof we will show that Lemma~\ref{lem:part-deriv} is exactly this condition. We restrict to the presentation of case~(1), the second case follows analogously.

The decomposition $X_{k_0\ell_0}=Y+Z$ corresponds to the decomposition $\vct x_{k_0\ell_0} = \vct y+\vct z$ with $\vct y$ the orthogonal projection of $\vct x_{k_0\ell_0}$ on the linear span of $\vct x_0$ and $\vct x_{ij}$, $(i,j)\neq(k_0,\ell_0)$. Note that $\vct z\neq\vct0$. The curve $\vct X(t)$ defined in~\eqref{eq:def-X_(ij)(t)-nonzero} corresponds to the curve $(\vct x_0(t),\vct x_{11}(t),\ldots,\vct x_{mn}(t))$ in~$\mE^{1+mn}$ given by
  \[ \vct x_0(t) = \vct x_0 ,\qquad \vct x_{ij}(t) = \vct x_{ij} ,\quad\text{for } (i,j)\neq (k,\ell) ,\qquad x_{k\ell}(t) = x_{k\ell}+t\vct z . \]
We obtain
\begin{align*}
   \|\vct x_{k\ell}(t)-\vct x_0(t)\|^2 & = \|\vct x_{k\ell} + t\vct z-\vct x_0\|^2 = \|\vct x_{k\ell}-\vct x_0\|^2 + t^2 \|\vct z\|^2 ,
\\ \|\vct x_{k\ell}(t)-\vct x_{ij}(t)\|^2 & = \|\vct x_{k\ell} + t\vct z-\vct x_{ij}\|^2 = \|\vct x_{k\ell}-\vct x_{ij}\|^2 + t^2 \|\vct z\|^2 ,\quad \text{if } (i,j)\not\in \{(k,\ell),(k_0,\ell_0)\} ,
\\ \|\vct x_{k\ell}(t)-\vct x_{k_0\ell_0}(t)\|^2 & = \|\vct x_{k\ell} + t\vct z-\vct y-\vct z\|^2 = \|\vct x_{k\ell}-\vct y\|^2 + (t-1)^2 \|\vct z\|^2 ,
\end{align*}
and thus
\begin{align*}
   \tfrac{d}{dt} \|\vct x_{ij}(t)-\vct x_0(t)\|^2\big|_{t=0} & = 0 ,
\\ \tfrac{d}{dt} \|\vct x_{ij}(t)-\vct x_{i'j'}(t)\|^2\big|_{t=0} & = 0 ,\quad \text{if } \{(i,j),(i',j')\}\neq \{(k,\ell),(k_0,\ell_0)\} ,
\\ \tfrac{d}{dt} \|\vct x_{k\ell}(t)-\vct x_{k_0\ell_0}(t)\|^2\big|_{t=0} & = -2\|\vct z\|^2 .
\end{align*}
Hence, Lemma~\ref{lem:part-deriv} shows exactly the condition described in Lemma~\ref{lem:geom_maurer}, which finishes the proof.
\end{proof}

\end{document}